\titleformat*{\section}{\normalsize}
\titleformat*{\subsection}{\small}
\titleformat*{\subsubsection}{\footnotesize}
    \rule{\textwidth}{1pt}
\rule{\textwidth}{0.3pt}
\newcommand\blfootnote[1]{%
  \begingroup
  \renewcommand\thefootnote{}\footnote{#1}%
  \addtocounter{footnote}{-1}%
  \endgroup
}
\numberwithin{equation}{section}
\DeclareMathOperator{\normdot}{\| \cdot \|}
\DeclareMathOperator{\C}{\mathbb{C}}
\DeclareMathOperator{\N}{\mathbb{N}}
\DeclareMathOperator{\Hh}{ \mathcal{H}}
\DeclareMathOperator{\Kk}{ \mathcal{K}}
\DeclareMathOperator{\Bh}{\mathcal{B}(\mathcal{H})} %bounded linear on H%
\DeclareMathOperator{\Bk}{\mathcal{B}(\mathcal{K})}
\DeclareMathOperator{\Cs}{\boldsymbol{C}^{*}} %C star %
\DeclareMathOperator{\Aa}{\mathcal{A}} % A algebra%
\DeclareMathOperator{\Ss}{\mathcal{S}}
\DeclareMathOperator{\Tt}{\mathcal{T}}
\DeclareMathOperator{\Rr}{\mathcal{R}}
\DeclareMathOperator{\Dd}{\mathcal{D}}
\DeclareMathOperator{\Cc}{\mathcal{C}}
\DeclareMathOperator{\B}{\mathcal{B}}
\newcommand{\spann}{\mathrm{span}}
\DeclareMathOperator{\Jj}{\mathcal{J}}
\newcommand{\Mn}{M_{n}}
\newcommand{\rank}{\mathop{\operator@font rank}}
\newcommand{\e}{{\varepsilon}}
\newcommand{\vertiii}[1]{{\left\vert\kern-0.25ex\left\vert\kern-0.25ex\left\vert #1 
    \right\vert\kern-0.25ex\right\vert\kern-0.25ex\right\vert}}
\newcommand{\bb}[1]{\mathbb{#1}}
\newcommand{\cl}[1]{\mathcal{#1}}
\newcommand{\sca}[1]{\langle#1\rangle} %
\newtheorem{theorem}{Theorem}[section]
\newtheorem{proposition}[theorem]{Proposition}
\newtheorem{corollary}[theorem]{Corollary}
\newtheorem{definition}[theorem]{Definition}
\newtheorem{example}[theorem]{Example}
\theoremstyle{remark}
\newtheorem{remark}[theorem]{Remark}
\newtheorem{remarks}[theorem]{Remarks}
\newcommand{\norm}[1]{\left\lVert#1\right\rVert}
\title{ON COPRODUCTS OF OPERATOR $\Aa$-SYSTEMS}
\author{Alexandros Chatzinikolaou}
\begin{document}
%%%%%%%%%%%%%%%%%%%%%%%%%%%%%%%%%%%%%%%%%%%%%%%%%%

%%%%%%%%%%%%%%%%%%%%%%% Start Roman numbering %%%%
\selectlanguage{english}

\maketitle

\begin{abstract}
    {\footnotesize 
   Given a unital $\boldsymbol{C}^{*}$-algebra $\mathcal{A}$, we prove the existence of the coproduct of two faithful operator $\mathcal{A}$-systems. We show that we can either consider it as a subsystem of an amalgamated free product of $\boldsymbol{C}^{*}$-algebras, or as a quotient by an operator system kernel. We introduce a universal $\boldsymbol{C}^{*}$-algebra for operator $\mathcal{A}$-systems and prove that in the case of the coproduct of two operator $\mathcal{A}$-systems, it is isomorphic to the  amalgamated over $\mathcal{A}$, free product of their respective universal $\boldsymbol{C}^{*}$-algebras. Also, under the assumptions of hyperrigidity for operator systems, we can identify the $\boldsymbol{C}^{*}$-envelope of the coproduct with the amalgamated free product of the $\boldsymbol{C}^{*}$-envelopes. We consider graph operator systems as examples of operator $\mathcal{A}$-systems and prove that there exist graph operator systems whose coproduct is not a graph operator system, it is however a dual operator $\mathcal{A}$-system. More generally,  the coproduct of dual operator $\mathcal{A}$-systems is always a dual operator $\mathcal{A}$-system. We show that the coproducts behave well with respect to inductive limits of operator systems.} 
\end{abstract}

\blfootnote{\textit{2010 Mathematics Subject Classification.} 47L25, 46L07, 46L09

\textit{Keywords.} Operator system, coproduct, operator $\Aa$-system, $\Cs$-cover, inductive limit}

\hfill
\section{INTRODUCTION}
The coproduct is a categorical notion that exists for objects in many categories and possesses a universal property. The coproduct of two objects, is a construction that gives rise to a new object to which the latter objects admit morphisms. In the case of $\Cs$-algebras, free products are of great importance for free probability theory \cite{Voiculescu1992FreeRV} and random matrix theory.   Coproducts have also been related to the study of group $ \Cs$-algebras through the connection of free group $ \Cs$-algebras with free products of group $\Cs$-algebras.   Coproducts of operator systems (resp. operator spaces) amalgamated over another operator system (resp. operator space) were introduced in \cite{10.2307/24715886}. As a special case, there is also  the coproduct $ \Ss\oplus_{1}\Tt $ of two operator systems $ \Ss$ and $\Tt$, amalgamated over the unit, which connects to the Connes Embedding Conjecture via its relation to Kirchberg’s conjecture in A. Kavruk's remarkable result. In \cite{Kavruk2014}, Kavruk proved that Kirchberg’s conjecture is equivalent to certain nuclearity properties of a 5-dimensional operator system. The operator system in question, denoted by $ \Ss_{2}$, is in fact an operator system coproduct, i.e. $ \Ss_{2} = \Ss_{1}\oplus_{1} \Ss_{1}$.

The purpose of this paper, is to begin a systematic study of operator $\Aa$-system coproducts  and investigate their connections with other related categories. To this end, we also highlight some of their possible applications.  
Operator $\Aa$-systems are an extension of operator systems which are bimodules over a unital $\Cs$-algebra, say $\Aa$ and  their operator system structure is also compatible with $\Aa $ (operator systems can be seen as operator $\Aa$-systems where $\Aa = \C$). Naturally, their morphisms are  unital completely positive maps that are also $\Aa$-bimodule maps.  In this paper, we begin by proving the existence of coproducts in this category. To do this, we restrict to a smaller class of operator $\Aa$-systems that contain $\Aa$ in some sense. We show that they can be realised as operator subsystems of the free product of $\Cs$-algebras, amalgamated over $ \Aa$ (Theorem \ref{existenceofcop}). The coproduct $ \Ss\oplus_{1}\Tt $ of the operator systems $ \Ss$ and $\Tt$, can be constructed also as an operator system quotient \cite{Fritz2014OperatorSS}, \cite{Kavruk2014} by a completely order  proximinal kernel. We give an analogous representation to the coproduct (Theorem \ref{theorimakernel}) $\Ss\oplus_{\Aa}\Tt$ of operator $ \Aa$-systems $\Ss$ and $\Tt$, as a quotient operator system $ \Ss \oplus \Tt/ \Jj $. An example of operator $\Aa$-systems, are graph operator systems, which have proven to be central in quantum information theory \cite{Paulsen2013QUANTUMCN}. Graph operator systems are operator $\Aa$-systems, with $\Aa= \Dd_{n}$, the $\Cs$-algebra of diagonal matrices in some $ \Mn(\C)$.  So graph operator systems are operator $ \Dd_{n}$-systems. Nevertheless, not all operator $ \Dd_{n}$-systems are graph operator systems. In Proposition \ref{toantiparad} we prove that there exist two $\Dd_{2}$-graph operator systems, whose $\Dd_{2}$-coproduct  is never completely order isomorphic to a graph operator system and in fact, it can never be represented faithfully on a finite dimensional Hilbert space. We do this via  a dimension-type argument, after establishing first a theorem about coproducts and their $\Cs$-envelopes. 

The coproduct $ \Ss \oplus_{1} \Tt$ amalgamated over the unit, for operator systems $ \Ss$ and $\Tt$, can be realised either as a subsystem of the free product of their universal $\Cs$-algebras $ C^{*}_{u}(\Ss) *_{1}C^{*}_{u}(\Tt) $, or as  a subsystem of the free product of their $\Cs$-envelopes $ C^{*}_{e}(\Ss) *_{1}C^{*}_{e}(\Tt) $. Furtheremore,  if  they contain enough unitaries, then $ C^{*}_{e}(\Ss \oplus_{1} \Tt) \cong C^{*}_{e}(\Ss) *_{1}C^{*}_{e}(\Tt)$ \cite{Kavruk2014}. Motivated by the above,  we introduce in Section \ref{sectioncovers} the universal $\Cs$-algebra $ C^{*}_{u,\Aa}(\Ss)$ of an operator $\Aa$-system. With this in hand, we can identify the coproduct of two operator $\Aa$-systems $ \Ss$ and $ \Tt$ as a subsystem of $C^{*}_{u,\Aa}(\Ss) *_{\Aa} C^{*}_{u,\Aa}(\Tt) $, and prove that $C^{*}_{u,\Aa}(\Ss\oplus_{\Aa} \Tt) \cong C^{*}_{u,\Aa}(\Ss) *_{\Aa} C^{*}_{u,\Aa}(\Tt) $. The $\Cs$-envelope is a $\Cs$-cover that respects the module actions, it is thus a suitable $ \Cs$-cover for operator $\Aa$-systems. We prove that if the operator $\Aa$-systems $ \Ss$ and $ \Tt$ are hyperrigid \cite{Arveson2008TheNC} then $ C^{*}_{e}(\Ss \oplus_{\Aa} \Tt) \cong C^{*}_{e}(\Ss) *_{\Aa}C^{*}_{e}(\Tt)$, therefore strengthening the operator system case. 

Dual operator systems, are operator systems that are duals as operator spaces \cite{timoney_2007}. Dual  operator $\Aa$-systems are dual operator systems that also carry an "$\Aa$-compatible" operator system structure. They were introduced in \cite{10.1093/imrn/rnz364}. We prove that the coproducts of  dual operator $\Aa$-systems are also dual operator $\Aa$-systems (Theorem \ref{coproductsofduals}). 

Inductive limits play an important role in $\Cs$-algebra theory and in quantum physics. Their study began by J. Glimm \cite{Glimm1960OnAC} and J. Dixmier \cite{DIXMIER1967182} and they have of course been central in G. Elliott's classification programme \cite{ELLIOTT197629}. Inductive limits of operator systems were introduced by I. Todorov and L. Mawhinney in \cite{Mawhinney2017InductiveLI} where they studied the interactions of inductive limits with operator systems structures as well as with operator system tensor products. Inductive limits of operator $\Aa$-systems have also been developed. In Section \ref{inductivelimits} we consider inductive limits of operator system coproducts, continuing the study began in \cite{Mawhinney2017InductiveLI}. 

\hfill

{\large Acknowledgments.}\\
I would like to express my gratitude to Prof. Aristides Katavolos, for his constant guidance and encouragement. I would also like to thank Profs. Mihalis Anoussis and Ivan Todorov, for their helpful discussions and suggestions. Finally, I would like to thank the anonymous referee for the thorough proofreading and useful comments. This work was partially supported by the Special Account for Research Grants of National and Kapodistrian University of Athens.

\section{PRELIMINARIES}
In this section we establish the terminology and state the definitions that shall be used
throughout the paper.

A \textit{*-vector space} $V$ is a complex vector space, together with an \textit{involution}, i.e. a \textit{conjugate linear} bijection $*: V \rightarrow V $ that is its own inverse. We let  $ V_h= \{ v \in V: v^{*}=v\}$ be the real vector space of 
\textit{hermitian (or selfadjoint)} elements of $V$. 

If $W$ is a real vector space, a \textit{cone} in $W$, is a non-empty subset $ C \subseteq W$ such that:
\begin{enumerate}
    \item if $ w \in C$ and $ \lambda \in [0,+\infty)$, then $ \lambda w \in C$
    \item if $ w, v \in C$, then $ w + v \in C$.
\end{enumerate}
We say that a cone $ C$ is a \textit{proper cone} in $W$ if  $ C\cap -C = \{0\}$.

An \textit{ordered *-vector space} is a pair $ (V,V^+)$, where $V$ is a *-vector space and $ V^+$ is a proper cone in $ V_h$, called the \textit{positive} elements of $V$. The cone  $V^+$ induces a partial order $\geq$ in $V_h $ by $ v \geq w \iff v -w \in V^+$. 

If $(V,V^{+})$ is an ordered *-vector space, an element $ e \in V_{h}$ will be called an \textit{order unit} of $V$, if for every $ v \in V_h$, there exists a real number $ r>0 $ such that $ re \geq v$.
An order unit  $ e \in V_{h}$ is said to be \textit{Archimedean} if whenever $ v \in V_{h}$ satisfies $ r e + v \geq 0$ for every real number $ r>0$, we have that $ v \geq 0$. In this case we call the triple $ (V,V^{+},e)$ an \textit{Archimedean ordered *-vector space}, or an \textit{AOU} space.

For a *-vector space $V$, we denote by $ M_{n,m}(V)$  the set of all n by m matrices with entries from V and set $M_{n,n}(V)= \Mn(V)$. The entry-wise addition and scalar multiplication turn
$M_{n,m}(V)$ into a complex vector space. When $n=m$, we equip $\Mn(V)$ with the involution  $ [v_{i,j}]^{*}=[v_{j,i}^{*}] $ and turn $ \Mn(V)$ into a *-vector space. Denote by $ \Mn(V)_{h}$ the set of hermitian elements of $ \Mn(V)$.
We also set $ M_{n,m}(\C)= M_{n,m}$, $\Mn(\C)= \Mn$ and denote by $ \{ E_{i,j} \}_{i,j=1}^{n}$ the canonical matrix unit system of $ \Mn$. 

\begin{definition}
Let $V$ be a *-vector space. We say that $ \{C_{n}\}_{n\in \N}$ is a matrix ordering on $V$ if: 
\begin{enumerate}
    \item $C_{n}$ is a cone in $\Mn(V)_{h}$ for each $ n\in \N$,
    \item $C_{n}\cap -C_{n}= \{0\}$, for each $ n \in \N$,
    \item $X C_{n} X^{*} \subseteq C_{m}$ for each $ X \in M_{m,n}$ and $ n ,m \in \N$.
\end{enumerate}
In this case we call $ (V,\{C_{n}\}_{n\in \N} )$ a matrix ordered *-vector space. We refer to condition 3. as the compatibility of the family $ \{C_{n}\}_{n\in \N}$.
\end{definition}

If V is a *-vector space and $ \{C_{n}\}_{n\in \N}$ is a matrix ordering on V, then conditions 1. and 2. above, imply that for every $ n\in \N$, the *-vector space $\Mn(V)$ is an ordered *-vector space.

\begin{definition}
Let $ (V,\{C_{n}\}_{n\in \N} )$ be a matrix ordered *-vector space. Let $e \in V_{h}$ and define
\begin{align*}
   e_{n}:= \begin{pmatrix}
        e &  & \\
          & \ddots &\\
          &        &  e
    \end{pmatrix}.
\end{align*}
We say that:
\begin{enumerate}
    \item $e$ is a matrix order unit for $V$, if $e_{n}$ is an order unit for $(\Mn(V), C_{n}) $, for each $n\in \N$, 
    \item $e$ is an Archimedean matrix order unit for $V$, if $e_{n}$ is an Archimedean order unit for $(\Mn(V), C_{n}) $ for each $n\in \N$.
\end{enumerate}
\end{definition}
We may also use the notation $ e_{n}=e \otimes I_{n}$, for the matrix order unit.

\begin{definition}\cite{CHOI1977156} \label{absoperatorsyst}
An (abstract) operator system, is a triple $ (V, \{C_{n}\}_{n\in \N}, e)$, where $(V, \{C_{n}\}_{n\in \N})$ is a matrix ordered *-vector space and $e \in V_{h}$ is an Archimedean matrix order unit.
\end{definition}

Let $ (V, V^{+},e)$ be an ordered *-vector space, with order unit $e \in V_{h}$. For each $ v \in V_{h}$, let
\begin{align*}
    \norm{v}:= \inf\{ r\geq 0: re \pm v \geq 0 \}
\end{align*}
and note that $ \normdot$ is a seminorm on $V_{h}$. We call $ \normdot$, the \textit{order seminorm}. If  $ (V, V^{+},e)$ is an AOU space,  i.e. the order unit $e$ is also Archimedean, then it was shown in 
\cite[Proposition 2.23]{10.2307/24903253} that $ \normdot$ is a norm, and can be extended to a norm on $V$.

Let $V$ and $W$ be two vector spaces. A linear map $ \phi :V \rightarrow W$, induces a linear map $ \phi^{(n)}: \Mn(V) \rightarrow \Mn(W)$ by setting $ \phi^{(n)}([v_{i,j}]) = [\phi(v_{i,j})] $. If $(V,V^{+},e)$ and $(W,W^{+},e')$ are two ordered *-vector spaces with order units $ e \in V_{h}$ and $ e' \in W_{h}$ then, a linear map $ \phi: V \rightarrow W$ is called \textit{unital} is $ \phi(e)= e'$.
Let $ (V,\{C_{n}\}_{n\in \N})$ and $(W,\{D_{n}\}_{n\in \N})$ be two matrix ordered *-vector spaces. A linear map $ \phi:V \rightarrow W$, is called 
\begin{itemize}
    \item \textit{positive}, if $ \phi(C_{1}) \subseteq D_{1}$
    \item \textit{n-positive}, if $ \phi^{(n)}(C_{n}) \subseteq D_{n}$, for some $n\in \N$,
    \item \textit{completely positive}, if $ \phi^{(n)}(C_{n}) \subseteq D_{n}$ for every $ n\in \N$.
\end{itemize} 
We say that %a linear map
 $\phi$ is a \textit{complete order isomorphism (c.o.i)}, if $\phi$ is a completely positive bijection and $\phi^{-1}$ is completely positive. %and we may write that $\phi$ is c.o.i. for brevity.
We call $\phi : V \rightarrow W $, a \textit{complete order embedding (c.o.e.)}, if $ \phi$ is a complete order isomorphism onto its range. % and we may write that $\phi$ is c.o.e. for brevity. 
A  \textit{u.c.p.} map  is a unital and completely positive map. %, we may sometimes abbreviate and call $\phi$ a u.c.p. map.

Let $\Hh$ be a Hilbert space. We denote by $ \Bh$ the space of all bounded linear operators on $\Hh$. The direct sum of $n$-copies of the Hilbert space $\Hh$, is denoted by $\Hh^{(n)}$. We also make the identification $ \Mn(\Bh) \cong \B(\Hh^{(n)})$, so that $ \Mn(\Bh)$ inherits a norm and a $ \Cs$-algebra structure. A subspace $ \Ss \subseteq \Bh$ is called a \textit{(concrete) operator system} if $ \Ss= \Ss^{*}$ and $ I_{\Hh} \in \Ss$ (where $I_{\Hh}$ is the identity operator on $\Hh$). Then $\Ss$ is an ordered *-vector space with the involution and order structure it inherits from $ \Bh$. Moreover, $ I_{\Hh}$ is an Archimedean order unit on $\Ss$. Since $ \Mn(\Ss) \subseteq \Mn(\Bh)\cong \B(\Hh^{(n)})$, we have that $ \Mn(\Ss)$ is also an ordered *-vector space with the order structure it inherits from $ \B(\Hh^{(n)})$ and has the identity $ I_{\Hh^{(n)}}$ %as a of diagonal matrix
%\begin{align*}
%   \begin{pmatrix}
%        I_{\Hh} &  & \\
%          & \ddots &\\
%          &        &  I_{\Hh}
%    \end{pmatrix}
%\end{align*}
as an Archimedean order unit. Thus %erefore, we can see that 
every concrete operator system $ \Ss \subseteq \Bh$ is also an (abstract) operator system in the sense of Definition \ref{absoperatorsyst}. The following celebrated result of Choi and Effros states that the converse is also true.
\begin{theorem}\cite[Theorem 4.4]{CHOI1977156}
%Every concrete operator system is also an abstract operator system. Conversely, 
If $ (V, \{C_{n}\}_{n\in \N}, e)$ is an abstract operator system, then there exist a Hilbert space $\Hh$, a concrete operator system $ \Ss \subseteq \Bh$ and a unital complete order isomorphism $ \Phi: V \rightarrow \Ss$.
\end{theorem}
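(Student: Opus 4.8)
The plan is to realise $V$ concretely by collecting together all of its completely positive maps into matrix algebras and taking their direct sum, in the spirit of a matricial GNS construction. First I would exploit that $e$ is an \emph{Archimedean} matrix order unit: by \cite[Proposition 2.23]{10.2307/24903253} applied at each matrix level the order seminorm is a genuine norm on $\Mn(V)_{h}$, and the Archimedean condition is precisely what guarantees that each cone $C_{n}$ is closed in this order-norm topology. Hence, given a hermitian $X \in \Mn(V)_{h}$ with $X \notin C_{n}$, a Hahn--Banach separation of the point $X$ from the closed proper cone $C_{n}$ yields a self-adjoint functional $s_{0}$ on $\Mn(V)$ that is nonnegative on $C_{n}$ yet satisfies $s_{0}(X) < 0$. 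Since $e_{n}$ is an order unit we have $s_{0}(e_{n}) > 0$, so after rescaling we obtain a \emph{state} $s$ (a positive unital functional) on $\Mn(V)$ with $s(X) < 0$.

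The technical core is a correspondence between states on $\Mn(V)$ and unital completely positive maps $V \to \Mn$. Given a state $s$ on $\Mn(V)$, I would define $\phi_{s}\colon V \to \Mn$ by $(\phi_{s}(v))_{ij} = n\, s(v \otimes E_{i,j})$, where $v \otimes E_{i,j} \in \Mn(V)$ denotes the matrix carrying $v$ in its $(i,j)$ entry; conversely, a ucp map $\phi$ produces the state $\tilde{s}_{\phi}(X) = \langle \phi^{(n)}(X)\,\xi_{n}, \xi_{n}\rangle$ evaluated against the unit vector $\xi_{n} = \tfrac{1}{\sqrt{n}}\sum_{i} e_{i}\otimes e_{i} \in \Cn \otimes \Cn$. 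A direct computation gives, for $X = [v_{ij}] \in \Mn(V)$,
\begin{align*}
    \langle \phi^{(n)}(X)\,\xi_{n}, \xi_{n}\rangle = \frac{1}{n}\sum_{i,j=1}^{n} \big(\phi(v_{ij})\big)_{ij},
\end{align*}
which makes the two assignments mutually inverse and matches unitality of $\phi$ with $s(e_{n}) = 1$. The main obstacle is verifying that $\phi_{s}$ is genuinely \emph{completely} positive: one checks that for $Y \in C_{m}$ the block matrix $\phi_{s}^{(m)}(Y)$ pairs nonnegatively against every vector, which reduces, through the compatibility condition of the matrix ordering ($XC_{n}X^{*}\subseteq C_{m}$ for $X \in M_{m,n}$), to the positivity of $s$ on suitably compressed positive elements of $\Mn(V)$.

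With the correspondence in hand, I would set $\Phi = \bigoplus_{\phi} \phi$, the direct sum taken over all ucp maps $\phi\colon V \to M_{n_{\phi}}$ (equivalently, over all states on all $\Mn(V)$, which is a set-indexed family), acting on $\Hh = \bigoplus_{\phi} \mathbb{C}^{n_{\phi}}$, so that $\Phi\colon V \to \Bh$. Being a direct sum of unital completely positive maps, $\Phi$ is unital and completely positive, so $X \in C_{m}$ forces $\Phi^{(m)}(X) \geq 0$. For the reverse implication I would argue by contraposition: if $X \in M_{m}(V)_{h} \setminus C_{m}$, the separation step of the first paragraph furnishes a state $s$ on $M_{m}(V)$ with $s(X) < 0$, and the corresponding ucp map $\phi_{s}\colon V \to M_{m}$ satisfies $\langle \phi_{s}^{(m)}(X)\,\xi_{m}, \xi_{m}\rangle = s(X) < 0$; since $\phi_{s}$ occurs as a summand of $\Phi$, this shows $\Phi^{(m)}(X) \not\geq 0$. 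Thus $\Phi$ is a complete order embedding. Injectivity is then automatic: if $\Phi(v) = 0$, writing $v = v_{1} + i v_{2}$ with $v_{1}, v_{2}$ hermitian, the embedding property gives $\pm v_{k} \in C_{1}$, whence $v_{k} \in C_{1} \cap -C_{1} = \{0\}$. Therefore $\Phi$ is a unital complete order isomorphism onto $\Ss := \Phi(V) \subseteq \Bh$, which is a concrete operator system, as required.
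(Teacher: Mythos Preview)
The paper does not supply a proof of this theorem; it is quoted as the classical Choi--Effros representation theorem with a citation to \cite[Theorem~4.4]{CHOI1977156}, and is used thereafter as background. So there is no in-paper argument to compare against.

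That said, your sketch is essentially the original Choi--Effros proof and is correct in outline. The three ingredients---Hahn--Banach separation in each $M_{n}(V)_{h}$ using closedness of $C_{n}$ (which follows from the Archimedean property), the bijection between states on $M_{n}(V)$ and unital completely positive maps $V\to M_{n}$ via the maximally entangled vector, and the direct sum over all such maps---are exactly the steps in \cite{CHOI1977156}. Two small points worth tightening if you write this out in full: first, you should check that $\Phi=\bigoplus_{\phi}\phi$ lands in $\Bh$ rather than just in the algebraic product, which follows because each ucp map is contractive for the order norm so $\sup_{\phi}\|\phi(v)\|<\infty$; second, the verification that $\phi_{s}$ is completely positive is indeed the crux and deserves the explicit computation (compressing an element of $C_{mn}$ by a row vector built from an arbitrary $\eta\in\mathbb{C}^{mn}$ and invoking compatibility to land in $C_{n}$, then applying $s$). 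With those details filled in, the argument is complete.
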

By the above Theorem, we no longer distinguish between concrete and abstract operator system and simply call each one of them an operator system.

We will use the following definition of an operator system kernel, for more equivalent definitions of the operator system kernel, see \cite{Kavruk2010QuotientsEA}.

\begin{definition}
Let $\Ss$ be an operator system and $\Jj $ a subspace of $ \Ss$. We say that $ \Jj \subseteq \Ss$ is a kernel in $\Ss$, if there exists an operator system $\Tt$ and a unital completely positive map $\phi: \Ss \rightarrow \Tt$ such that $ \Jj=\ker \phi $.
\end{definition}

A selfadjoint subspace $ \Jj \subseteq \Ss$ of an operator system $\Ss$ is called an \textit{order ideal}, if $ q \in \Jj$ and $ 0 \leq p \leq q$  implies that $ p \in \Jj$.

If $ \Jj \subseteq \Ss$ is an order ideal, we may define for each $ n \in \N$
\begin{align*}
    D_{n} := \{ [s_{i,j} + \Jj] \in \Mn(\Ss/\Jj): \exists k_{i,j} \in \Jj  \; \text{ such that }  \;   [s_{i,j}] + [k_{i,j}] \in \Mn(\Ss)^{+} \},
\end{align*}
and note that $(\Ss/\Jj, \{D_{n} \}_{n\in \N})$ is a matrix ordered *-vector space, with matrix order unit $ 1+ \Jj$. However, the unit may fail to be Archimedean for this family of cones.

\begin{proposition}\cite[Proposition 3.4]{Kavruk2010QuotientsEA} \label{matrixconesquo}
Let $\Ss$ be an operator system, and $ \Jj \subseteq \Ss$ be a kernel. For each $n \in \N$ we define 
\begin{align*}
    C_{n} := \{ [s_{i,j} + \Jj] \in \Mn(\Ss/\Jj): \forall \e>0 \; \;  \e e_n + [s_{i,j} + \Jj]  \in D_{n}  \}.  
    \end{align*}  % $ \e1\otimes I_{n}=\e e_n$?
    Then, $(\Ss/\Jj, \{C_{n} \}_{n\in \N})$ is a matrix ordered *-vector space with Archimedean matrix order unit $ e + \Jj$. Moreover, the quotient map $ q : \Ss \rightarrow \Ss/\Jj$ is completely positive.
\end{proposition}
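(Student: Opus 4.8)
The plan is to regard $\{C_{n}\}$ as the Archimedeanization of the family $\{D_{n}\}$ and to check the axioms of a matrix ordered *-vector space one by one, leaving the separation axiom for last, as it is the only place where the hypothesis that $\Jj$ is a \emph{kernel} (rather than merely a self-adjoint subspace) is genuinely needed. Since $\Jj$ is a kernel, I would first fix an operator system $\Tt$ and a u.c.p. map $\phi\colon \Ss \to \Tt$ with $\ker\phi = \Jj$. Being unital and positive, $\phi$ is self-adjoint, so $\Jj$ is a self-adjoint order ideal; in particular the family $\{D_{n}\}$ is well defined, each $D_{n}$ is a cone in $\Mn(\Ss/\Jj)_{h}$, and the family is compatible, both properties being inherited from $\Mn(\Ss)^{+}$. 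Moreover $\phi$ descends to a well-defined, injective, unital linear map $\dot\phi\colon \Ss/\Jj \to \Tt$, whose ampliations $\dot\phi^{(n)}$ are again injective; this injectivity is what I would exploit for separation.

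The four routine axioms I would dispatch as follows. That each $C_{n}$ is a cone follows formally from $D_{n}$ being one: closure under nonnegative scalars is clear, and if $x,y \in C_{n}$ then for every $\e>0$ both $\tfrac{\e}{2}e_{n}+x$ and $\tfrac{\e}{2}e_{n}+y$ lie in $D_{n}$, so $\e e_{n}+(x+y)\in D_{n}$ and hence $x+y\in C_{n}$. For compatibility $X C_{n} X^{*}\subseteq C_{m}$ with $X\in M_{m,n}$, given $x\in C_{n}$ and $\e>0$ I would apply $X(\cdot)X^{*}$ to $\eta e_{n}+x\in D_{n}$ for small $\eta$, using compatibility of $\{D_{n}\}$, and absorb the error term via $X e_{n} X^{*}=e\otimes(XX^{*})\le \norm{X}^{2} e_{m}$ in $\Mn(\Ss)^{+}$ (valid because $\norm{X}^{2}I_{m}-XX^{*}\ge 0$). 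The Archimedean property is built into the definition: if $\e e_{n}+y\in C_{n}$ for all $\e>0$, then fixing $\e'>0$ and using $\tfrac{\e'}{2}e_{n}+(\tfrac{\e'}{2}e_{n}+y)\in D_{n}$ gives $\e' e_{n}+y\in D_{n}$, so $y\in C_{n}$. That $e+\Jj$ is a matrix order unit follows by choosing self-adjoint representatives $s_{ij}$ of a hermitian element and an $r>0$ with $r e_{n}\pm[s_{ij}]\in\Mn(\Ss)^{+}$, which already places $r e_{n}\pm[s_{ij}+\Jj]$ in $D_{n}\subseteq C_{n}$; the same observation with $r=0$ shows $q^{(n)}(\Mn(\Ss)^{+})\subseteq D_{n}\subseteq C_{n}$, i.e. $q$ is completely positive.

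The heart of the argument, and the step I expect to be the main obstacle, is the separation axiom $C_{n}\cap -C_{n}=\{0\}$, since enlarging $D_{n}$ to $C_{n}$ may destroy properness and nothing so far rules out $C_{n}=\Mn(\Ss/\Jj)_{h}$. Here I would prove the sharper statement $\dot\phi^{(n)}(C_{n})\subseteq \Mn(\Tt)^{+}$ and then invoke injectivity. Concretely, for $x=[s_{ij}+\Jj]\in C_{n}$ and $\e>0$ there are $k_{ij}\in\Jj$ with $[\e\delta_{ij}e+s_{ij}+k_{ij}]\in\Mn(\Ss)^{+}$; applying the completely positive map $\phi^{(n)}$ and using $\phi(k_{ij})=0$ together with unitality yields $\e e'_{n}+\dot\phi^{(n)}(x)\in\Mn(\Tt)^{+}$, where $e'$ is the unit of $\Tt$. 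Letting $\e\to 0$ and using that $e'$ is an Archimedean matrix order unit for $\Tt$ gives $\dot\phi^{(n)}(x)\ge 0$. Consequently, if $x\in C_{n}\cap -C_{n}$ then $\dot\phi^{(n)}(x)$ is both positive and negative in $\Mn(\Tt)$, hence $0$; injectivity of $\dot\phi^{(n)}$ then forces $x=0$. This completes the verification that $(\Ss/\Jj,\{C_{n}\})$ is a matrix ordered *-vector space with Archimedean matrix order unit $e+\Jj$ and that $q$ is completely positive.
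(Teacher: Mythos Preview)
The paper does not supply its own proof of this proposition; it is quoted verbatim from \cite{Kavruk2010QuotientsEA} and used as a black box. So there is nothing in the present paper to compare your attempt against.

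That said, your argument is correct and is essentially the standard one. The verifications that each $C_{n}$ is a cone, that the family is compatible, that $e+\Jj$ is an Archimedean matrix order unit, and that $q$ is completely positive are all routine and you handle them cleanly (in the compatibility step, your absorption of the cross term via $\norm{X}^{2}e_{m}-Xe_{n}X^{*}\in q^{(m)}(\Mn(\Ss)^{+})\subseteq D_{m}$ is exactly what is needed). The only substantive point is properness of $C_{n}$, and your use of the induced injective map $\dot\phi\colon \Ss/\Jj\to\Tt$ to push $C_{n}$ into $\Mn(\Tt)^{+}$, followed by the Archimedean property of $\Tt$ and injectivity of $\dot\phi^{(n)}$, is precisely the mechanism by which the kernel hypothesis enters. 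One very minor gap worth making explicit: when you pick a self-adjoint representative $[s_{ij}]$ of a hermitian class in $\Mn(\Ss/\Jj)$ you are implicitly using that $\Jj$ is self-adjoint, so that $[s_{ij}]$ may be replaced by $\tfrac{1}{2}([s_{ij}]+[s_{ji}^{*}])$; you note $\Jj=\Jj^{*}$ earlier, so this is fine.
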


\begin{definition}
Let $\Ss$ be an operator system, and $ \Jj \subseteq \Ss$ be a kernel. We call the operator system $(\Ss/\Jj, \{C_{n} \}_{n\in \N}, e+ \Jj)$ defined in \ref{matrixconesquo}, the quotient operator system.  If  $D_{1}= C_{1}$, we say that the kernel $\Jj$ is order proximinal and if $ D_{n}= C_{n}$ for each $ n \in \N$, we say that $ \Jj$ is completely order proximinal.  
\end{definition}

Let $ \Aa$ be a unital $\Cs$-algebra. A  vector space $V$ is called an $\Aa$-\textit{bimodule} if there exist bilinear maps 
\begin{align*}
    \Aa \times V  & \rightarrow V  &  V \times \Aa &  \rightarrow V  \\
    (a,x) & \mapsto a \cdot x  &
    (x,a) & \mapsto x \cdot a,
\end{align*}
such that the following associativity conditions hold for all $ x \in V$, and $ a, b \in \Aa$:
\begin{enumerate}
    \item $(a \cdot x) \cdot b = a \cdot (x \cdot b)$
    \item $(ab) \cdot x = a \cdot (b \cdot x)$
    \item $ x \cdot (ab)  = (x \cdot a) \cdot b $
    \item $ 1 \cdot x =x = x \cdot 1$
\end{enumerate}

Let V be a  *-vector space that is also an $\Aa$-bimodule.  We  set \begin{align*}
     [a_{i,j}] \cdot [x_{i,j}] = \Big[ \sum_{k=1}^{m}a_{i,k}\cdot x_{k,j}  \Big] \; \; \text{ and }  \; \; \; \;  [x_{i,j}] \cdot [b_{i,j}] = \Big[ \sum_{k=1}^{n}x_{i,k}\cdot b_{k,j}  \Big] 
\end{align*}
for all $[x_{i,j}] \in M_{m,n}(V) $, $ [a_{i,j}] \in  M_{k,m}(\Aa)$, $[b_{i,j}] \in  M_{n,l}(\Aa) $.
So that, $ \Mn(V)$ becomes a $\Mn(\Aa)$-bimodule.
\begin{definition}
Let $\Ss$ be an operator system and $\Aa$ be a unital $\Cs$-algebra.  We call $\Ss$ an operator $\Aa$-system if
\begin{enumerate}
    \item $\Ss$ is an $\Aa$-bimodule
    \item $ (a \cdot s)^{*} = s^{*} \cdot a^{*}$ 
    \item $a \cdot e =  e \cdot a$ 
    \item $[a_{i,j}] \cdot [s_{i,j}] \cdot [a_{i,j}]^{*} \in \Mn(\Ss)^{+} $
\end{enumerate}
 for all $[a_{i,j}] \in M_{n,m}(\Aa)$,  $ [s_{i,j}] \in M_{m}(\Ss)^{+}$,  $ s \in \Ss$ and $ a \in \Aa$. 
 
 Condition 4. will be referred to as the $ \Aa$-\textit{compatibility} of the family of matrix cones $ ( \Mn(\Ss)^{+})_{n\in \N}$.
\end{definition}

The following result is a Choi-Effros type representation theorem of operator $\Aa$-systems that we will frequently use in the sequel. It states that we can  represent both the operator $\Aa$-system $\Ss$ and the unital $\Cs$-algebra $\Aa$ on the same Hilbert space $\Hh$, where the module action corresponds to operator multiplication. % of operators  in this operator space. 
 \begin{theorem}\cite[Corollary 15.13]{paulsen_2003} \label{asystemsrepresentation}
 Let $\Aa$ be a unital $\Cs$- algebra and $\Ss$ be an (abstract) operator $ \Aa$-system. There exists a Hilbert space $\Hh$, a unital complete order embedding $ \phi : \Ss \rightarrow \Bh$ and a unital $*$-homomorphism $ \pi : \Aa \rightarrow \Bh$, such that 
 \[ \phi(a \cdot s) = \pi(a) \; \phi(s), \] for all $a\in \Aa$ and $s \in \Ss$.
 \end{theorem}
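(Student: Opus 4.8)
The plan is to use that $\Ss$ simultaneously carries the structure of an (abstract) operator system and of an abstract operator $\Aa$-bimodule, and to produce one representation faithful for both. The matrix cones and the Archimedean order unit $e$ give $\Ss$ its operator space structure, while the $\Aa$-compatibility (axiom 4) makes the module action completely contractive: conjugating the positive matrix $\begin{pmatrix} e & s \\ s^{*} & e\end{pmatrix}$ by $\begin{pmatrix} a & 0 \\ 0 & 1\end{pmatrix}$ produces the positive matrix $\begin{pmatrix} (aa^{*})\cdot e & a\cdot s \\ s^{*}\cdot a^{*} & e\end{pmatrix}$, and since $a\mapsto a\cdot e$ is unital completely positive this forces $\norm{a\cdot s}\ls\norm{a}\norm{s}$, with the analogous bound at every matrix level. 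A useful bookkeeping remark is that, once a completely positive intertwining representation $\phi$ is found, the relation $(a\cdot s)^{*}=s^{*}\cdot a^{*}$ lets a single $*$-representation handle both one-sided actions, because $\phi(s\cdot b)=\phi((b^{*}\cdot s^{*})^{*})=(\pi(b^{*})\phi(s^{*}))^{*}=\phi(s)\pi(b)$.

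First I would invoke the Christensen--Effros--Sinclair representation theorem for operator bimodules to obtain Hilbert spaces $\Hh_{1},\Hh_{2}$, unital $*$-homomorphisms $\pi_{1}:\Aa\to\mathcal{B}(\Hh_{1})$ and $\pi_{2}:\Aa\to\mathcal{B}(\Hh_{2})$, and a complete isometry $\phi_{0}:\Ss\to\mathcal{B}(\Hh_{1},\Hh_{2})$ with $\phi_{0}(a\cdot s\cdot b)=\pi_{2}(a)\,\phi_{0}(s)\,\pi_{1}(b)$ for all $a,b\in\Aa$ and $s\in\Ss$. The decisive observation is that the axiom $a\cdot e=e\cdot a$ makes the contraction $V:=\phi_{0}(e)\in\mathcal{B}(\Hh_{1},\Hh_{2})$ an intertwiner, $V\pi_{1}(a)=\pi_{2}(a)V$ for every $a\in\Aa$, with $\norm{V}=\norm{e}=1$.

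The heart of the argument is to upgrade $(\phi_{0},\pi_{1},\pi_{2})$ to a \emph{unital} representation on a single Hilbert space. Writing $V=U|V|$ for the polar decomposition, the intertwining relation shows that the initial space of $V$ is reducing for $\pi_{1}$ and the final space is reducing for $\pi_{2}$, so $U$ identifies a $\pi_{1}$-reducing subspace of $\Hh_{1}$ with a $\pi_{2}$-reducing subspace of $\Hh_{2}$ compatibly with the representations. After compressing to these subspaces one obtains a unitary $U$ with $U\pi_{1}(\cdot)U^{*}=\pi_{2}(\cdot)$ and, setting $\pi:=\pi_{1}$ and $\phi(s):=U^{*}\phi_{0}(s)$, a unital map $\phi:\Ss\to\Bh$ (here $\Hh$ is the reduced space) satisfying $\phi(e)=U^{*}V=I$ and $\phi(a\cdot s)=U^{*}\pi_{2}(a)\phi_{0}(s)=\pi(a)\phi(s)$. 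Because $U$ is unitary, $\phi$ remains a complete isometry.

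Finally, a unital complete isometry between operator systems is automatically a complete order embedding, so $\phi$ is the required unital complete order embedding; in particular $\phi$ is completely positive, hence $*$-preserving, and the bookkeeping remark above shows $\pi$ implements the right action as well. I expect the genuine obstacle to be the unitalisation step: one must arrange $V=\phi_{0}(e)$ to become a unitary identification of the two spaces while \emph{preserving the complete isometry of $\phi_{0}$} on the compressed subspaces. This is exactly where the Archimedean order-unit property of $e$ must be brought to bear — it is what guarantees that compressing to the isometric part of $V$ loses no order or norm information — and it is the step that fuses the operator-system structure with the bimodule representation rather than treating them separately.
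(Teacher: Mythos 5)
There is a genuine gap at the unitalisation step, and it is not one that the Archimedean property can repair after the fact. The Christensen--Effros--Sinclair theorem sees only the operator-space and bimodule structure of $\Ss$; the unit and the matrix order are discarded the moment you pass to $\phi_{0}$. Consequently $V=\phi_{0}(e)$ is merely \emph{some} contraction intertwining $\pi_{1}$ and $\pi_{2}$, and nothing forces it to be a partial isometry. In the polar decomposition $V=U|V|$ one has $U^{*}V=U^{*}U|V|=|V|$, and this equals the identity of $(\ker V)^{\perp}$ only when $V$ is a partial isometry; so your assertion $\phi(e)=U^{*}V=I$ is unjustified. The genuine ``isometric part'' of $V$, namely $\ker(|V|-I)$, may be a proper or even trivial subspace, and compressing $\phi_{0}$ to it would in general destroy both the complete isometry and the intertwining relation. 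You correctly flag this as the crux, but you do not close it, and invoking the order unit cannot close it from where you stand, because the order data is no longer present in the CES picture. (Your preliminary computations --- the complete contractivity of the action via conjugating $\left(\begin{smallmatrix} e & s \\ s^{*} & e\end{smallmatrix}\right)$, the reduction of the right action to the left one via $(a\cdot s)^{*}=s^{*}\cdot a^{*}$, and the fact that a unital complete isometry between operator systems is a complete order embedding --- are all correct, but they are the easy part.)

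The result as cited (Paulsen, Corollary 15.13) is obtained by a route that never leaves the unital ordered category, and the paper itself points to this in the remark preceding Theorem \ref{C*envelopesofAsystems}: embed $\Ss$ unitally and completely order isomorphically into some $\Bh$ by Choi--Effros, form the injective envelope $I(\Ss)$, which is a unital $\Cs$-algebra under the Choi--Effros product $x\circ y=\Phi(xy)$ for a minimal $\Ss$-projection $\Phi$, and observe that $\theta(a):=a\cdot e$ defines a \emph{unital} completely positive map $\Aa\to I(\Ss)$ (this uses axiom 4 exactly as in your opening computation). The real work is then to show that $\theta$ is multiplicative for the Choi--Effros product and that $\theta(a)\circ s=a\cdot s$ for all $s\in\Ss$, via a Schwarz-inequality/multiplicative-domain argument; a faithful unital representation of $I(\Ss)$ then produces $\phi$ and $\pi$ simultaneously, with $\phi(e)=I$ by construction. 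To salvage your approach you would have to show that the CES representation can be chosen with $\phi_{0}(e)$ unitary, which amounts to re-proving the theorem; otherwise the injective-envelope argument is the one to carry out.
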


 \begin{remarks}
 
 1) Suppose that $ 1 \in \Aa \subseteq \mathcal{S} \subseteq \Bh$, where $ \Ss$ is a concrete operator system and $\Aa$ is a $\Cs$-algebra such that $ \Aa \cdot \Ss \subseteq \Ss$. We then have that
$$\Ss \cdot \Aa = \Ss^{*} \cdot \Aa^{*} = (\Aa \cdot \Ss)^{*} \subseteq \Ss^{*}=\Ss.$$
Such an operator system is also an operator $\Aa$-system.

2) If $\Ss$ is an operator $\Aa$-system, then by Theorem \ref{asystemsrepresentation}, there exist a Hilbert space $\Hh$, a unital complete order embedding $ \phi : \Ss \rightarrow \Bh$  and a unital *-representation $ \pi : \Aa \rightarrow \Bh$ such that $ \phi (a \cdot s)= \pi(a) \phi(s) $. So, 
$$ \pi(a) =\pi(a) \phi(e)= \phi( a \cdot e) \in \phi(\Ss)   $$
for all $ a\in \Aa$, and $ \pi(1)= \phi(1 \cdot e) = \phi(e) = I_{\Hh}$ which means that $$I_{\Hh}\in  \pi(\Aa) \subseteq \phi(\Ss) \subseteq \Bh$$
and $ \phi(\Ss)$ is an operator $\pi(\Aa)$-system with module action given by the multiplication of operators in $\Bh$.

3)  If further, in the above setting we had that the  unital *-homomorphism $ \pi: \Aa \rightarrow \Bh$ was faithful, we could  identify $\Aa$ with $ \pi(\Aa)$ and $ \Ss $ with its image under $\phi$ and reduce to the situation described in Remark 1).
Note however, that the unital *-homomorphisms $\pi$ may fail to be faithful.
 \end{remarks}

 \begin{definition} \label{faithfulopAsys}
 Let $\Ss$ be an operator $\Aa$-system. Denote its module action $ \Aa \times \Ss \rightarrow \Ss$, by $ a \cdot s $, for  $ a\in \Aa$, $s \in \Ss$. We will say that $ \Ss$ is a \textbf{faithful} operator $\Aa$-system, if 
 \begin{align*}
     a \cdot e \neq 0, \quad \text{ for all } \quad a \in \Aa \setminus \{0\}.
 \end{align*}
 \end{definition}

\begin{remark}\label{concreteopasyst} 
Let $\Ss$  be a faithful operator $\Aa$-system. 
In this case  Theorem \ref{asystemsrepresentation} gives the following more "concrete" representation: 
there exists a Hilbert space $\Hh$, a unital complete order embedding  $\phi :\Ss \rightarrow \B(\Hh)$ and a faithful unital *-representation $ \pi:\Aa \rightarrow \B(\Hh)$, such that 
 \[ \phi(a \cdot s) = \pi(a) \; \phi(s), \] for all $a\in \Aa$ and $s \in \Ss$.
 
 When this is the case, we may omit the inclusions, write $ 1 \in \Aa \subseteq \Ss \subseteq \Bh$ and treat the module action as multiplication of operators.

\end{remark}

\begin{definition}
Let $ \Ss$ and $ \Tt$ be two operator $\Aa$-systems. A linear map $ \phi: \Ss \rightarrow \Tt$ is called an $ \Aa$-bimodule map, if for every $s \in \Ss$ and $ a_{1}, a_{2} \in \Aa$,
\begin{align*}
    \phi(a_{1} \cdot s \cdot a_{2} )= a_{1} \circ \phi(s) \circ a_{2},
\end{align*}
where we denote by $ \cdot$ the module action of $\Ss$ and by $ \circ$ the module action of $\Tt$. We may sometimes, when there is no confusion, denote the module actions of $\Aa$ on $ \Ss$ and $\Tt$  with the same symbol, or with no symbol at all.
\end{definition}
In order to avoid future confusion and to point out the fact that even if $\phi$ is unital, $\Aa$ might not be contained in $ \Tt$, we state the following equivalent formulation.
\begin{remark}
Let $\Ss$ and $\Tt$ be two operator $\Aa$-systems and   $ \phi: \Ss \rightarrow \Tt$ be an $\Aa$-bimodule map. Then there exist a Hilbert space $ \Hh$, a complete order embedding of $ \Tt $ into $ \Bh$ and  a unital *-homomorphism $\pi: \Aa \rightarrow \Bh$ such that for every $s \in \Ss$ and $ a_{1}, a_{2} \in \Aa$,
\begin{align*}
    \phi(a_{1}\cdot s \cdot a_{2})= \pi(a_{1})\; \phi(s) \; \pi(a_{2}).
\end{align*}
\end{remark}

In the sequel, we will frequently use the following results.

\begin{proposition}\cite[Exercise 4.3]{paulsen_2003} \label{bimoduleequivalence}
Let $\B$, $ \mathcal{C} \subseteq \Bh$ be two unital $\Cs$-algebras and $\Aa$ be a unital $\Cs$-subalgebra such that $1 \in \Aa \subseteq \B \cap \Cc $. Let also $\phi : \B \rightarrow \mathcal{C}$ be a completely positive map.
Then, $\phi$ is an $\Aa$-bimodule map if and only if $ \phi(a)= a\cdot \phi(1) $ for every $a \in \Aa$.

\end{proposition}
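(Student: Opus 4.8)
The forward implication is immediate: if $\phi$ is an $\Aa$-bimodule map, then for every $a \in \Aa$ we have $\phi(a) = \phi(a \cdot 1 \cdot 1) = a \cdot \phi(1) \cdot 1 = a\cdot\phi(1)$, using that the module action on $\Cc \subseteq \Bh$ is just operator multiplication. For the converse, assume $\phi(a) = a\,\phi(1)$ for all $a \in \Aa$ and write $p := \phi(1) \geq 0$. My plan is to pass to a Stinespring dilation and upgrade the hypothesis, which is a statement about the compression $V^*\rho(\cdot)V$, to an honest intertwining relation inside the dilation. Since $\phi$ is completely positive and $\B$ is unital, Stinespring's theorem provides a Hilbert space $\Kk$, a unital $*$-homomorphism $\rho : \B \rightarrow \B(\Kk)$ and a bounded operator $V : \Hh \rightarrow \Kk$ with $\phi(b) = V^*\rho(b)V$ for all $b \in \B$; in particular $p = V^*V$. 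First I would record that $p$ commutes with $\Aa$: since $\phi$ is positive it is $*$-preserving, so $a^*p = \phi(a^*) = \phi(a)^* = (ap)^* = pa^*$ for every $a \in \Aa$, and $\Aa = \Aa^*$.

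The key step is the intertwining identity $\rho(a)V = Va$ for all $a \in \Aa$. For $\xi \in \Hh$ I would expand $\norm{(\rho(a)V - Va)\xi}^2$ into its two diagonal terms and the cross term, and evaluate each using the hypothesis in the form $V^*\rho(a)V = ap$ (applied both to $a$ and, crucially, to $a^*a \in \Aa$, which gives $V^*\rho(a^*a)V = a^*a\,p$), together with the fact that $p$ commutes with $\Aa$. Each diagonal term then equals $\langle a^*a\,p\,\xi,\xi\rangle$, while the real part of the cross term contributes $-2\langle a^*a\,p\,\xi,\xi\rangle$ (this quantity is real precisely because $a^*a\,p = p\,a^*a$ is a product of commuting positives), so the sum vanishes and $\rho(a)V = Va$. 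Taking adjoints and using $\Aa = \Aa^*$ also yields $V^*\rho(a) = a\,V^*$.

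With the intertwining relation in hand the bimodule property follows at once: for $a \in \Aa$ and $b \in \B$,
\[ \phi(ab) = V^*\rho(a)\rho(b)V = a\,V^*\rho(b)V = a\,\phi(b), \qquad \phi(ba) = V^*\rho(b)\rho(a)V = V^*\rho(b)V\,a = \phi(b)\,a, \]
and combining these gives $\phi(a_1 \cdot b \cdot a_2) = a_1\,\phi(b)\,a_2$ for all $a_1, a_2 \in \Aa$ and $b \in \B$, i.e. $\phi$ is an $\Aa$-bimodule map. The only real obstacle is the middle step: the hypothesis controls only the compression $V^*\rho(a)V$, and one must see that this datum is rigid enough to force the full operator identity $\rho(a)V = Va$. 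This is an equality-in-the-Schwarz-inequality phenomenon, and the cancellation genuinely depends both on $p$ commuting with $\Aa$ and on feeding $a^*a$ (not merely $a$) into the hypothesis; without complete (or at least $2$-)positivity to supply the dilation, the argument would not get off the ground. An alternative, dilation-free route is to run the same cancellation directly through $2$-positivity applied to the operator matrix $\big[\begin{smallmatrix} a^*a & a^*b \\ b^*a & b^*b \end{smallmatrix}\big]$, but the Stinespring version is cleaner to present.
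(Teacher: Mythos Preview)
The paper does not supply a proof of this proposition; it is stated with a citation to \cite[Exercise 4.3]{paulsen_2003} and used as a black box. Your argument is correct and is precisely the intended Stinespring-based solution to that exercise: dilate $\phi$ to $V^*\rho(\cdot)V$, use the hypothesis on $a^*a\in\Aa$ together with the commutation $[p,\Aa]=0$ to force the norm $\|\rho(a)V-Va\|$ to vanish, and then read off the bimodule identity. The only small cosmetic point is that your forward implication writes $\phi(a\cdot 1\cdot 1)$ where $\phi(a\cdot 1)$ would suffice, but this is harmless.
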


\begin{proposition}\cite[Exercise 7.4]{paulsen_2003}
Let $\Aa$ and $\B$ be two unital $\Cs$-algebras and $ \Ss$ be an operator system such that $1\in  \Aa\subseteq \Ss \subseteq \B$. Suppose that $ \Aa \subseteq \B(\Hh)$. If $ \phi: \Ss \rightarrow \B(\Hh)$ is a completely positive  $\Aa$-bimodule map, then every completely positive extension of $\phi$ to $\B$ is also an $\Aa$-bimodule map. 
\end{proposition}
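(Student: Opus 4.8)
The plan is to fix an arbitrary completely positive extension $\psi \colon \B \to \B(\Hh)$ of $\phi$ and show directly that it intertwines the two module actions. The first step is to record the values of $\psi$ on $\Aa$ itself. Since $1 \in \Aa \subseteq \Ss$ and $\psi$ restricts to $\phi$ on $\Ss$, for every $a \in \Aa$ we have $\psi(a) = \phi(a) = \phi(a \cdot 1) = a \cdot \phi(1) = a \cdot \psi(1)$, where the third equality is the $\Aa$-bimodule property of $\phi$ applied to the unit. Writing $\rho \colon \Aa \to \B(\Hh)$ for the given inclusion (the codomain action) and using that $\psi$ is $*$-preserving, the same identity applied to $a^{*}$ yields $\psi(a) = \psi(1)\,\rho(a)$ as well; in particular $\psi(1)$ commutes with $\rho(\Aa)$. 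Finally, since $a^{*}a \in \Aa \subseteq \Ss$, one also gets $\psi(a^{*}a) = (a^{*}a)\cdot\psi(1) = \rho(a)^{*}\psi(1)\rho(a)$. These identities, which involve only values of $\phi$, are the entire input.

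The substance of the argument is a Stinespring dilation $\psi(b) = V^{*}\pi(b)V$, with $\pi \colon \B \to \B(\Kk)$ a unital $*$-representation and $V \colon \Hh \to \Kk$ bounded, so that $\psi(1) = V^{*}V$. I would then establish the intertwining relation $\pi(a)V = V\rho(a)$ for all $a \in \Aa$ by expanding $\|(\pi(a)V - V\rho(a))\xi\|^{2}$ for $\xi \in \Hh$ into four inner products and rewriting each through the identities above, e.g.
\[
\langle \pi(a)V\xi,\pi(a)V\xi\rangle = \langle \psi(a^{*}a)\xi,\xi\rangle = \langle \rho(a)^{*}\psi(1)\rho(a)\xi,\xi\rangle = \langle \pi(a)V\xi, V\rho(a)\xi\rangle,
\]
so that all four terms coincide and the expression collapses to $0$. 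With the intertwining in hand the conclusion is immediate: for $a \in \Aa$ and any $b \in \B$, using $V^{*}\pi(a) = (\pi(a^{*})V)^{*} = (V\rho(a^{*}))^{*} = \rho(a)V^{*}$, we get $\psi(ab) = V^{*}\pi(a)\pi(b)V = \rho(a)\,\psi(b)$, and symmetrically $\psi(ba) = \psi(b)\,\rho(a)$. Hence $\psi$ is an $\Aa$-bimodule map. This is the module-map analogue of Proposition \ref{bimoduleequivalence}: the reduction in the first paragraph is exactly the criterion $\psi(a) = a\cdot\psi(1)$ appearing there.

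The step I expect to be the main obstacle is precisely this intertwining relation, and more conceptually the fact that the action of $\Aa$ on the domain $\B$ and on the codomain $\B(\Hh)$ need not arise from a single representation on a common Hilbert space: a representation of $\Aa$ on $\Hh$ need not extend to $\B$, so Proposition \ref{bimoduleequivalence} cannot be quoted verbatim. The Stinespring dilation is what lets the two actions be compared through the single operator $V$, and the Cauchy--Schwarz-type collapse to $0$ is the one place where complete (in fact $2$-)positivity of $\psi$ is genuinely used.
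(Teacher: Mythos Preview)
The paper does not give its own proof of this proposition; it is simply quoted with a citation to \cite[Exercise 7.4]{paulsen_2003}. Your argument is correct and is in fact the standard way to do this exercise: use the $\Aa$-bimodule property of $\phi$ on the unit to pin down $\psi|_{\Aa}$, take a Stinespring dilation $\psi = V^{*}\pi(\cdot)V$, and verify the intertwining $\pi(a)V = V\rho(a)$ by expanding the norm-square. Two small remarks. First, your closing comment that Proposition~\ref{bimoduleequivalence} ``cannot be quoted verbatim'' is a fair observation about how that proposition is \emph{stated} in the paper (with $\B,\Cc\subseteq\Bh$ sharing a single copy of $\Aa$), but its proof is exactly the Stinespring/intertwining computation you carry out, so there is no real obstacle---you have essentially reproduced that proof in the slightly more general setting. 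Second, the parenthetical ``(in fact $2$-)positivity'' is a little loose as written: the Stinespring dilation you invoke uses complete positivity, while the $2$-positive version of the result goes instead through the Schwarz inequality/multiplicative-domain characterization rather than through a dilation. Neither point affects the validity of your proof.
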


We recall the categorical notion of the coproduct.
\begin{definition} \label{coprodcat}
Let $\mathcal{O}_{1}$ and $\mathcal{O}_{2}$ be two objects in a category $\mathcal{G}$. Their coproduct, is another object  $\mathcal{O}_{1}*\mathcal{O}_{2}$, also in the category $ \mathcal{G}$, together with two morphisms $ \phi_{1}:\mathcal{O}_{1} \rightarrow \mathcal{O}_{1}*\mathcal{O}_{2} $   and $ \phi_{2}:\mathcal{O}_{2} \rightarrow \mathcal{O}_{1}*\mathcal{O}_{2}$, that satisfies the following universal property:
If $ \mathcal{O}$ is an object in the same category with morphisms $ \psi_{1}:\mathcal{O}_{1} \rightarrow \mathcal{O}$ and $ \psi_{2}:\mathcal{O}_{2} \rightarrow \mathcal{O}$, then there exists a unique morphism $ \Psi: \mathcal{O}_{1}*\mathcal{O}_{2} \rightarrow \mathcal{O}$ such that $ \Psi \circ \phi_{1}= \psi_{1}$ and $ \Psi \circ \phi_{2}= \psi_{2}$. That is, the following diagram commutes:
 \begin{center}
          \begin{tikzcd}[row sep=large, column sep=large]
             \mathcal{O}_{1}  \arrow[rd, "\psi_{1}"]   \arrow[r, "\phi_{1}"]  & \mathcal{O}_{1}*\mathcal{O}_{2} \arrow[d, "\Psi"] & \mathcal{O}_{2} \arrow[ld, "\psi_{2}"] \arrow[l, "\phi_{2}"] \\
             & \mathcal{O}  
              \end{tikzcd}
                \end{center}
\end{definition}

It is known that the coproduct of two operator systems exists and can be realised as an operator subsystem of the free product of the $\Cs$-algebras they are contained in \cite[Theorem 5.2]{bbf61d43a5de485bad4fe7b6c908f9a2}.
As we will show, the same can be said about the coproducts of a certain class of operator $ \Aa$-systems, only this time, they are realised as  subsystems of the  free product of the $\Cs$-algebras,  amalgamated over $\Aa$.

Let $\Aa_{i}$, $i=1, 2$  be two unital $\Cs$-algebras. We say that $\Aa_{1}$ and $\Aa_{2}$ contain a common unital  $\Cs$-subalgebra  $ \B$, if there exist unital embeddings $ \e_{i}: \B \rightarrow \Aa_{i}$ for $ i=1,2$.

\begin{theorem}\cite[Davidson-Kakariadis version of Boca's Theorem]{davidson_kakariadis_2019} \label{DKversion}
Let $\Aa_{i}$, $i=1, 2$  be two unital $\Cs$-algebras containing a common unital $ \Cs$-subalgebra $ \B$. Let  $ \Phi_{i} : \Aa_{i} \rightarrow \Bh$, $ i=1, 2$ be two unital completely positive maps which restrict to a common linear map of $\B$. Then, there exists a unital completely positive map $ \Phi:=\Phi_{1} * \Phi_{2}: \Aa_{1} *_{\B} \Aa_{2} \rightarrow \Bh$ such that $ \Phi\arrowvert_{\Aa_{i}} = \Phi_{i}$ for $ i=1, 2$.
\end{theorem}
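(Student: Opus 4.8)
The plan is to reduce the statement to the universal property of the (full) amalgamated free product for $*$-homomorphisms, by dilating each $\Phi_i$ to a representation and then compressing a suitably constructed free product representation back to $\Hh$. Throughout I identify $\Hh$ with its images under the Stinespring isometries.

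First I would dilate. Since each $\Phi_i : \Aa_i \to \Bh$ is unital and completely positive, Stinespring's theorem furnishes a Hilbert space $\Kk_i \supseteq \Hh$, a unital $*$-representation $\pi_i$ of $\Aa_i$ on $\Kk_i$, and the orthogonal projection $P_i$ of $\Kk_i$ onto $\Hh$ with
\begin{align*}
\Phi_i(a) = P_i\,\pi_i(a)\big|_{\Hh}, \qquad a \in \Aa_i,
\end{align*}
and these dilations may be taken minimal, i.e. $\Kk_i = \overline{\pi_i(\Aa_i)\Hh}$. Restricting to the common subalgebra, the hypothesis says $\Phi_1|_{\B} = \Phi_2|_{\B} =: \psi$, so both $\pi_1|_{\B}$ and $\pi_2|_{\B}$ are dilations of the single completely positive map $\psi : \B \to \Bh$. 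The closed subspace $\Kk_0^{(i)} := \overline{\pi_i(\B)\Hh} \subseteq \Kk_i$ is reducing for $\pi_i(\B)$ and carries the minimal Stinespring dilation $(\rho_i,\Kk_0^{(i)})$ of $\psi$; by uniqueness of the minimal dilation there is a unitary $\Kk_0^{(1)} \to \Kk_0^{(2)}$ intertwining $\rho_1,\rho_2$ and fixing $\Hh$. I would use this to identify both with one fixed representation $(\rho,\Kk_0)$ of $\B$, with $\Hh \subseteq \Kk_0$. This alignment over $\B$ is exactly what an amalgamation requires.

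Next I would form the free product representation over $\B$. Writing $\Kk_i = \Kk_0 \oplus \Kk_i^{\circ}$ and using the $\B$-correspondence structure determined by $\rho$, I build the amalgamated free product Hilbert space
\begin{align*}
\Kk = \Kk_0 \;\oplus\; \bigoplus_{n \geq 1}\ \bigoplus_{i_1 \neq \cdots \neq i_n}\ \Kk_{i_1}^{\circ}\otimes_{\B}\cdots\otimes_{\B}\Kk_{i_n}^{\circ},
\end{align*}
on which each $\pi_i$ extends to a $*$-representation of $\Aa_i$ in the standard free way. By construction all these actions restrict on $\B$ to the single representation induced by $\rho$, so the pair $(\pi_1,\pi_2)$ agrees on $\B$; by the universal property of the amalgamated free product it therefore induces a unital $*$-representation $\pi$ of $\Aa_1 *_{1} \Aa_2$ (amalgamated over $\B$) on $\Kk$ with $\pi|_{\Aa_i} = \pi_i$. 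I then define $\Phi := P_{\Hh}\,\pi(\cdot)\big|_{\Hh}$, where $P_{\Hh}$ is the projection of $\Kk$ onto $\Hh \subseteq \Kk_0$; as a compression of a $*$-representation it is automatically unital and completely positive. To see $\Phi|_{\Aa_i} = \Phi_i$, note that for $a \in \Aa_i$ and $\xi \in \Hh \subseteq \Kk_0$ the vector $\pi(a)\xi = \pi_i(a)\xi$ lies in $\Kk_0 \oplus \Kk_i^{\circ}$; since $\Kk_i^{\circ} \perp \Hh$, the compression retains only the $\Kk_0$-component and returns $P_{\Hh}\pi_i(a)|_{\Hh} = \Phi_i(a)$.

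The main difficulty will be the construction and verification of the free product representation $\pi$ on $\Kk$: one must check that the relative tensor products over the $\B$-correspondence $(\Kk_0,\rho)$ are well-defined Hilbert spaces, that the free actions of $\Aa_1$ and $\Aa_2$ restrict to one and the same action of $\B$ on the whole of $\Kk$ and not merely on the vacuum piece $\Kk_0$, and that these assemble into a genuine representation of the full amalgam $\Aa_1 *_{1}\Aa_2$. Once $\pi$ is available, complete positivity of $\Phi$ and the identity $\Phi|_{\Aa_i} = \Phi_i$ are immediate; the crucial freeness input is precisely the orthogonality of the alternating tensor legs to $\Hh$, which prevents the compression from mixing the contributions of $\Aa_1$ and $\Aa_2$.
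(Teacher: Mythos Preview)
The paper does not prove this statement; it is quoted from Davidson--Kakariadis as a known strengthening of Boca's theorem and used as a black box (e.g.\ in the proof of Theorem~\ref{existenceofcop}). So there is no in-paper argument to compare against directly.

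Your overall strategy --- Stinespring-dilate each $\Phi_i$, align the dilations over $\B$, build a single representation of the free product, and compress to $\Hh$ --- is exactly Boca's original line of attack and is the right picture. The Davidson--Kakariadis argument the paper cites is genuinely different: it is an \emph{iterative} dilation, alternately enlarging the Hilbert space so that at each stage one of the $\Phi_i$ becomes a $*$-representation while the other remains u.c.p.\ (extending via Arveson), and then passing to the inductive limit. That route avoids the Fock-space combinatorics entirely and, crucially, never needs to form a relative tensor product over $\B$.

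This is where your sketch has a real gap. You write the free Fock space with legs $\Kk_{i_1}^{\circ}\otimes_{\B}\cdots\otimes_{\B}\Kk_{i_n}^{\circ}$, but a relative tensor product $\otimes_{\B}$ of Hilbert spaces needs a \emph{right} Hilbert $\B$-module structure on each left factor, and your $\Kk_i^{\circ}$ are ordinary Hilbert spaces carrying only a left $\B$-action through $\pi_i|_{\B}$. The Voiculescu-style amalgamated free product of Hilbert modules requires conditional expectations $\Aa_i\to\B$ (equivalently, $\B$-valued inner products) to produce that right module structure, and no such expectations are given here; the phrase ``the $\B$-correspondence structure determined by $\rho$'' does not supply one. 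If you read the theorem literally as stated, with the \emph{unital} free product $\Aa_1 *_{1}\Aa_2$, then $\B$ plays no role in the construction: take ordinary Hilbert tensor products, use the standard Voiculescu free product of $(\Kk_1,\Hh)$ and $(\Kk_2,\Hh)$ pointed at the common subspace $\Hh$, and your compression argument goes through. But the paper actually applies the result to $\B(\Hh_1)*_{\Aa}\B(\Hh_2)$, i.e.\ the amalgamated version, and for that the Fock-space route as you have written it does not stand on its own; the iterative dilation proof is the clean way to get the full statement.
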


\section{EXISTENCE OF OPERATOR A-SYSTEM  COPRODUCTS}
Suppose that we work in the category whose objects are operator $\Aa$-systems and whose morphisms are unital completely positive $\Aa$-bimodule maps. A coproduct of two operator $\Aa$-systems $ \Ss$ and $ \Tt$,  would be an operator $\Aa$-system $X$ together with two ucp $\Aa$-bimodule maps $ \phi_{1}:\Ss \rightarrow X$ and $ \phi_{2}:\Tt \rightarrow X$ such that the universal property for coproducts holds. However we will restrict to the class of operator $\Aa$-systems that are faithful in the sense of definition \ref{faithfulopAsys} and prove the existence of the coproduct in this case.   
\begin{remark}
   The faithfulness assumption cannot be omitted in general. This is clearly seen in the following example.\footnote{Thanks to Prof. Mihalis Anoussis.} 
\end{remark}
\begin{proof}
Let $\Aa$ be the unital $\Cs$-algebra  $\bb C \oplus \bb C$; let $\Ss$ and $\Tt$  be the one-dimensional operator $\Aa$-systems with actions defined for 
$a=(a_1,a_2)\in \Aa$ by
$$a\cdot s := a_1s, \; \; s\in \Ss, \qquad a\cdot t := a_2t, \; \;  t\in \Tt. $$
Suppose that $X$ is an operator $\Aa$-system satisfying the universal property 
of Definition \ref{coprodcat}. Then there exist ucp $\Aa$-bimodule maps $\phi_1 : \Ss \to X$ 
and $\phi_2 : \Tt \to X$ so that
$$\phi_1(a \cdot s) = a \cdot \phi_1 (s), \qquad 
\phi_2(a \cdot t) = a \cdot \phi_2 (t)$$
where $s\in \Ss, t \in \Tt$ and $a\in \Aa$.

Observe that if $a=(0,a_2)\in \Aa$ then $a\cdot e_S=0$ and so, since $\phi_1$ is unital, 
$$0=\phi_1(a\cdot e_S)=a\cdot e_X\, .$$
Similarly if $a'=(a_1,0)\in \Aa$ then $a'\cdot e_T=0$ and so
$$a'\cdot e_X=0\, .$$
It follows that for every $a=(a_1,a_2)\in \Aa$ we have
$$a\cdot e_X=(a_1,0)\cdot e_X+(0,a_2)\cdot e_X=0$$
which is a contradiction because the unit of $\Aa$ acts as an identity on $X$, i.e. $1_{\Aa} \cdot x = x$ for every $ x \in X$. 
\end{proof}

In the example above, it becomes apparent that for some operator $\Aa$-systems, not only does their  coproduct  not exist, but they don't even admit ucp $\Aa$-bimodule maps into the same operator $\Aa$-system. The referee also pointed out the following characterisation, regarding the existence of the coproduct. We give a sketch of the proof.

\begin{proposition} \label{refereesprop}
Let $\Aa$ be a C*-algebra and $\Ss_{1}, \Ss_{2}$ be two operator $\Aa$-systems. Then, the coproduct $\Ss_{1}\oplus_{\Aa}\Ss_{2}$ exists if and only if there is a triplet $(\phi,\psi,\Ss)$ where $\Ss$ is an operator $\Aa$-system and $ \phi : \Ss_{1} \rightarrow \Ss$ and $\psi : \Ss_{2} \rightarrow \Ss$ are ucp $\Aa$-bimodule maps.
\end{proposition}
\begin{proof}

The necessity is clear. For the converse; for each "admissible" triplet  $(\phi,\psi,\Ss)$ we call $\Hh_{(\phi,\psi,\Hh)} $  the Hilbert space on which $\Ss$ acts and then take the direct sum over "sufficiently many" such Hilbert spaces. That is, 
\begin{align*}
     \Hh':= \oplus_{(\phi,\psi,\Ss)} \Hh_{(\phi,\psi,\Hh)},  
\end{align*}
 (see for example \cite[Proposition 2.4.2]{timoney_2007} for related constructions).
Also define the maps
\begin{align*}
    \e_{1}:= \oplus_{(\phi,\psi,\Ss)}\phi: \Ss_{1}&  \rightarrow \B(\Hh')\\
     s_{1}& \mapsto \oplus_{(\phi,\psi,\Ss)}\phi(s_{1})
\end{align*}
and
\begin{align*}
    \e_{2}:= \oplus_{(\phi,\psi,\Ss)}\psi: \Ss_{2}&  \rightarrow \B(\Hh')\\
     s_{2}& \mapsto \oplus_{(\phi,\psi,\Ss)}\psi(s_{2}).
\end{align*}
 Now we set $ \mathcal{S}:= \e_{1}(\Ss_{1}) + \e_{2}(\Ss_{2}) $ to be the subsystem of $ \B(\Hh')$ generated by $\e_{1}$ and $ \e_{2}$. Then, it is not hard to show that $ \mathcal{S}$ is in fact an operator $\Aa$-system that satisfies the universal property of the coproduct, with corresponding maps $\e_{1}, \e_{2}$.
\end{proof}

Next, we  prove the existence of the coproduct, for faithful operator $\Aa$-systems.  In fact, what is shown is the existence of a triplet $ (\phi,\psi,\Ss)$ satisfying the requirements of Proposition \ref{refereesprop}. We also provide a more explicit construction of the coproduct as a subsystem of the amalgamated free product of $\Cs$-algebras.

\begin{theorem} \label{existenceofcop}
Let $\Ss_{1}$ and $ \Ss_{2} $ be two faithful operator $\Aa$-systems. Then, their coproduct exists, is denoted by $\Ss_{1} \oplus_{\Aa} \Ss_{2}$, it is a faithful operator $\Aa$-system and is unique up to a complete order isomorphism that is also an $\Aa$-bimodule map.
\begin{proof}
Let $ \Ss_{1} \subseteq \B(\Hh_{1})$ and $ \Ss_{2} \subseteq \B(\Hh_{2})$ be the operator $ \Aa$-systems, where $ \Aa$ is a unital $ \Cs$-algebra, $ \Hh_{i}$ be two Hilbert spaces and $ \pi_{i}:\Aa \rightarrow \B(\Hh_{i})$, $i=1,2$ the respective faithful representations.

Let  $\B(\Hh_{1}) *_{\Aa} \B(\Hh_{2}) $ denote the amalgamated free product over $ \Aa$ \cite[Theorem 3.1]{10.2307/24892242}. This is the  $\Cs$-algebra generated by $i_{1}(\B(\Hh_{1})) \cup i_{2}(  \B(\Hh_{2}))$ for  embeddings $ i_{k} :\B(\Hh_{k}) \hookrightarrow \B(\Hh_{1}) *_{\Aa} \B(\Hh_{2})  $, $k =1,2 $ such that $ i_{1}\circ \pi_{1} = i_{2} \circ \pi_{2}$ that satisfies the following property: \\
Whenever $ \phi_{k} : \B(\Hh_{k}) \rightarrow \Bh$, $ k=1,2$ are *-representations such that $\phi_{1}\circ \pi_{1}= \phi_{2}\circ\pi_{2}$ there exists a *-representation $ \pi : \B(\Hh_{1}) *_{\Aa} \B(\Hh_{2})\rightarrow \Bh $ such that $ \pi \circ i_{k} = \phi_{k}$.\\
We define  
\begin{align*}
    \Ss_{1} + \Ss_{2}:= i_{1}(\Ss_{1}) + i_{2}(\Ss_{2}) \subseteq  \B(\Hh_{1}) *_{\Aa} \B(\Hh_{2})
\end{align*} 
which is an operator subsystem of the amalgamated free product and is such that the inclusions of $ \Ss_{1}, \Ss_{2}$ in it are complete order embeddings.
Since $i_{k}, \pi_{k} $, $ k=1,2 $ are injective *-homomorphisms thus completely isometric, we identify $ \Aa \cong i_{1}\circ \pi_{1}(\Aa) = i_{2}\circ \pi_{2}(\Aa) $ and $i_{k}(\B(\Hh_{k})) \cong \B(\Hh_{k}) $, $ k=1,2 $. So we omit the embeddings and consider $\B(\Hh_{1}) $ and $ \B(\Hh_{2})$   as $\Cs$-subalgebras of $\B(\Hh_{1}) *_{\Aa} \B(\Hh_{2}) $, having $\Aa$ as a common $\Cs$-subalgebra and write \begin{align*}
  \Ss_{1} + \Ss_{2}=  \{ s_{1}+ s_{2} :s_{1}\in \Ss_{1}, s_{2} \in \Ss_{2} \} \subseteq  \B(\Hh_{1}) *_{\Aa} \B(\Hh_{2}).
\end{align*}

Then, $\Ss_{1} + \Ss_{2}$ is an operator system, and also an $ \Aa$-bimodule that contains $\Aa$ as subsystem, with module action given by
\[ a \cdot (s_{1} + s_{2} ) = a \cdot s_{1} + a \cdot s_{2}, \; \; \; \; a \in \Aa, \; s_{1} \in \Ss_{1}, \; s_{2} \in \Ss_{2}. \]
The module action is in fact multiplication of elements in the free product $\Cs$-algebra, so it is well defined and the family of matrix cones is $\Aa$-compatible, that is, $ \Ss_{1} + \Ss_{2}$ is a faithful operator $\Aa$-system.

We will prove that  $\Ss_{1} + \Ss_{2}$, with the associated maps being the inclusions, has the desired universal property of the coproducts in the category of operator $\Aa$-systems.
Suppose that we have another operator $\Aa$-system $ \Tt \subseteq \B(\Kk)$, where $ \rho : \Aa \rightarrow \B(\Kk)$ is the associated representation, and consider two u.c.p. $ \Aa$-bimodule maps,
\begin{align*}
    \psi_{1}: \Ss_{1} \rightarrow \Tt \\
    \psi_{2}: \Ss_{2} \rightarrow \Tt. 
\end{align*}
By Arveson's theorem, we extend them to u.c.p. maps: 
\begin{align*}
    \Tilde{\psi}_{1}: \B(\Hh_{1}) \rightarrow \B(\Kk) \\
    \Tilde{\psi}_{2}: \B(\Hh_{2}) \rightarrow \B(\Kk). 
\end{align*}
These two are unital completely positive maps that agree on $\Aa$ with a common *-representation, indeed,
$$ \Tilde{\psi}_{1}(a)= \psi_{1}(a)=\rho(a)=\psi_{2}(a)= \Tilde{\psi}_{2}(a), \; \; \; \; \; a\in \Aa.$$
By Boca's Theorem (\ref{DKversion}),  there exists a unital completely positive map 
\[ \Psi : \B(\Hh_{1}) *_{\Aa} \B(\Hh_{2}) \rightarrow \B(\Kk) \] such that $ \Psi\arrowvert_{\B(\Hh_{i})} = \Tilde{\psi}_{i}$ for $ i =1,2$ and of course $ \Psi(a) =\rho(a)$ for all $ a \in \Aa$.
Now define $ \Phi := \Psi\arrowvert_{\Ss_{1}+\Ss_{2}}$, so that 
\[ \Phi : \Ss_{1} + \Ss_{2} \rightarrow \B(\Kk) \]
is a unital completely positive map such that for every $ s_{1} \in \Ss_{1}, \; \; s_{2} \in \Ss_{2}$:
\begin{align*}
    \Phi(s_{1} + s_{2}) & = \Psi\arrowvert_{\Ss_{1}+\Ss_{2}} (s_{1} + s_{2})\\
    &= \Psi\arrowvert_{\Ss_{1}+\Ss_{2}} (s_{1}) +\Psi\arrowvert_{\Ss_{1}+\Ss_{2}} (s_{2})\\
    &= \Tilde{\psi}_{1}(s_{1}) +\Tilde{\psi}_{2}(s_{2})\\
    & = \psi_{1}(s_{1}) + \psi_{2}(s_{2}) \in \Tt.
\end{align*}
This means that $\Phi$ has its image inside $ \Tt$ and also that $ \Phi\arrowvert_{\Ss_{i}}= \psi_{i}$ for $ i=1,2$.
Moreover, 
\begin{align*}
    \Phi(a \cdot (s_{1} + s_{2}))& = \Phi(a \cdot s_{1} + a \cdot s_{2})\\
    &=  \psi_{1}(a \cdot s_{1}) + \psi_{2}(a \cdot s_{2})\\
    &= \rho(a) \cdot \psi_{1}(s_{1}) + \rho(a) \cdot \psi_{2}(s_{2}) \\
     & = \rho(a) \cdot ( \psi_{1}(s_{1}) + \psi_{2}(s_{2}))\\
     & = \rho(a) \cdot \Phi(s_{1}+ s_{2})
\end{align*}
for every $ a \in \Aa, \; \; s_{1} \in \Ss_{1}, \; \; s_{2} \in \Ss_{2} $. That is, $ \Phi$ is a unital completely positive, $ \Aa$-bimodule map with image inside $ \Tt$ and whose restrictions to $ \Ss_{1}$ and $ \Ss_{2}$ gives $ \psi_{1}$ and $ \psi_{2}$ respectively. Thus, the operator $\Aa$-system  $  \Ss_{1} + \Ss_{2}  $  satisfies the universal property for coproducts of operator $\Aa$-systems. Finally, it is easy to check that two operator $\Aa$-systems that satisfy this universal property are completely order isomorphic. 
\end{proof}
\end{theorem}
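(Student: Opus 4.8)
The plan is to construct the coproduct explicitly as an operator subsystem of an amalgamated free product of $\Cs$-algebras and then verify the universal property directly. First I would use Remark \ref{concreteopasyst} to realise both systems concretely: since $\Ss_{1}$ and $\Ss_{2}$ are faithful, there are Hilbert spaces $\Hh_{1},\Hh_{2}$ and faithful unital $*$-representations $\pi_{i}:\Aa\to\B(\Hh_{i})$ with $1\in\Aa\subseteq\Ss_{i}\subseteq\B(\Hh_{i})$ and module action given by operator multiplication. I would then form the amalgamated free product $\B(\Hh_{1})*_{\Aa}\B(\Hh_{2})$, whose canonical embeddings $i_{k}$ are injective $*$-homomorphisms, hence completely isometric, and agree on $\Aa$; this makes $\Aa$ a genuinely common unital $\Cs$-subalgebra, and the restrictions $i_{k}|_{\Ss_{k}}$ are complete order embeddings. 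The candidate coproduct is the operator subsystem $\Ss_{1}+\Ss_{2}:=i_{1}(\Ss_{1})+i_{2}(\Ss_{2})$, equipped with these inclusions. Because its module action is just multiplication inside the free product and $\Aa$ sits faithfully inside, one checks readily that the matrix cones are $\Aa$-compatible and that $a\cdot e\neq 0$ for $a\neq 0$, so $\Ss_{1}+\Ss_{2}$ is itself a faithful operator $\Aa$-system containing $\Aa$.

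The heart of the argument is the universal property. Given any operator $\Aa$-system $\Tt\subseteq\B(\Kk)$ with representation $\rho:\Aa\to\B(\Kk)$ and u.c.p. $\Aa$-bimodule maps $\psi_{i}:\Ss_{i}\to\Tt$, I would first observe that unitality together with the bimodule property forces $\psi_{i}(a)=\psi_{i}(a\cdot e)=\rho(a)$ on $\Aa$. Extending each $\psi_{i}$ by Arveson's extension theorem to a u.c.p. map $\tilde\psi_{i}:\B(\Hh_{i})\to\B(\Kk)$, the two extensions still agree with $\rho$ on $\Aa$, so they restrict to a common linear map on the amalgam; Theorem \ref{DKversion} then supplies a u.c.p. map $\Psi:\B(\Hh_{1})*_{\Aa}\B(\Hh_{2})\to\B(\Kk)$ with $\Psi|_{\B(\Hh_{i})}=\tilde\psi_{i}$. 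Setting $\Phi:=\Psi|_{\Ss_{1}+\Ss_{2}}$, a short computation gives $\Phi(s_{1}+s_{2})=\psi_{1}(s_{1})+\psi_{2}(s_{2})\in\Tt$, so $\Phi$ maps into $\Tt$ and restricts to $\psi_{i}$; and since $\Phi(a)=\rho(a)$, either a direct computation or Proposition \ref{bimoduleequivalence} shows $\Phi$ is an $\Aa$-bimodule map.

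The remaining obstacle is uniqueness, at two levels. First, the mediating morphism $\Phi$ is unique because every element of $\Ss_{1}+\Ss_{2}$ is a sum $s_{1}+s_{2}$, so any u.c.p. $\Aa$-bimodule map restricting to the $\psi_{i}$ is pinned down by linearity; this is exactly where the concrete ``sum'' description pays off. Second, the coproduct is unique up to the required isomorphism by the standard categorical argument: if two objects enjoy the universal property, the mutually induced mediating morphisms compose to the identity by uniqueness, yielding a complete order isomorphism that is simultaneously an $\Aa$-bimodule map. The only genuinely delicate point I anticipate is ensuring that the Arveson extensions feed correctly into Theorem \ref{DKversion} — specifically that $\tilde\psi_{1}$ and $\tilde\psi_{2}$ really do restrict to a common map on $\Aa$ so the amalgamation hypothesis is met — but this reduces to the unitality and bimodule property of the $\psi_{i}$ and requires no estimate.
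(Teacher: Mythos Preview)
Your proposal is correct and follows essentially the same route as the paper: realise each $\Ss_{i}$ concretely inside $\B(\Hh_{i})$ via faithfulness, form $\B(\Hh_{1})*_{\Aa}\B(\Hh_{2})$, take $\Ss_{1}+\Ss_{2}$ as the candidate, and verify the universal property by Arveson-extending the $\psi_{i}$ and then invoking Theorem \ref{DKversion} to glue them. Your treatment is in fact slightly more careful than the paper's, since you explicitly address uniqueness of the mediating morphism $\Phi$ (forced by linearity on sums $s_{1}+s_{2}$), which the paper leaves implicit.
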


In the proof of Theorem \ref{existenceofcop}, we identify the coproduct of two operator $\Aa$-systems $\Ss$  and $ \Tt$, as a subsystem of the amalgamated free product of their containing operator spaces $\Bh$ and $\Bk$ respectively. 
In fact, we can consider the coproduct of two operator $\Aa$-systems $\Ss$ and $\Tt$ as a subsystem of amalgamated free product of any two $ \Cs$-algebras that contain $\Ss$ and $ \Tt$, as long as they respect their module actions.
\begin{proposition} \label{intheircsalgebras}
Let $ \Aa$, $\B_{1}, \B_{2}$ be  unital $\Cs$-algebras and $ \Ss_{1} \subseteq \B_{1} $,   $ \Ss_{2} \subseteq \B_{2}$ be two operator $ \Aa$-systems such that there exists  injective *-homomorphisms $\pi_{i}: \Aa \rightarrow \B_{i}$  with $ s_{i} \cdot a = s_{i} \pi_{i}(a)$ for every $ i =1, 2$, $ s_{i} \in \Ss_{i}$ and $ a \in \Aa$. Let also
\begin{align*}
    \Ss_{1}+\Ss_{2}:= \{s_{1}+s_{2}:s_{1}\in \Ss_{1}, s_{2} \in \Ss_{2}\} \subseteq \B_{1} *_{\Aa}\B_{2}.
\end{align*}
Then,  $\Ss_{1} \oplus_{\Aa}\Ss_{2} \cong \Ss_{1}+\Ss_{2}$ by a unital complete order isomorphism, associated with the inclusions $ \Ss_{i} \hookrightarrow \B_{i}$, $i=1,2$.
\begin{proof}
  The proof is essentially the same as in Theorem \ref{existenceofcop}. Let $\Tt \subseteq \B(\Hh)$ be an operator $\Aa$-system and $ \phi_{i}: \Ss_{i} \rightarrow \Tt$, $i=1,2$ be two u.c.p. $\Aa$-bimodule maps. We extend them to two unital completely positive $ \Aa$-bimodule maps $ \Tilde{\phi}_{i}: \B_{i} \rightarrow \B(\Hh) $, $ i=1,2$. By Boca's theorem \ref{DKversion}, there exists a u.c.p. $\Aa$-bimodule map $ \Phi : \B_{1} *_{\Aa}\B_{2} \rightarrow \B(\Hh) $ such that $ \Phi\arrowvert_{\B_{i}} = \Tilde{\phi}_{i}$, for $ i=1,2$. Finally, the map $\Phi\arrowvert_{\Ss_{1}+\Ss_{2}}$ has the desired properties and has image inside $ \Tt$.
\end{proof}
\end{proposition}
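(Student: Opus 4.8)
The plan is to verify that $\Ss_1 + \Ss_2$, equipped with the two inclusions $\Ss_i \hookrightarrow \Ss_1 + \Ss_2$, satisfies the universal property of the coproduct in the category of faithful operator $\Aa$-systems; the claimed isomorphism $\Ss_1 \oplus_{\Aa} \Ss_2 \cong \Ss_1 + \Ss_2$ then follows at once from the uniqueness part of Theorem \ref{existenceofcop}, since any two objects satisfying this universal property are completely order isomorphic via an $\Aa$-bimodule map. First I would record that $\Ss_1 + \Ss_2$ is a faithful operator $\Aa$-system: exactly as in Theorem \ref{existenceofcop}, the embeddings $\B_i \hookrightarrow \B_1 *_{\Aa} \B_2$ are faithful $*$-homomorphisms, hence complete order embeddings when restricted to $\Ss_i$, the amalgamation identifies $\pi_1(\Aa)$ with $\pi_2(\Aa)$ inside the free product, and the module action on $\Ss_1 + \Ss_2$ is operator multiplication inside $\B_1 *_{\Aa} \B_2$, so the matrix cones are $\Aa$-compatible. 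Since $a \cdot e = \pi_i(a)\,1 = \pi_i(a)$, we have $\pi_i(\Aa) \subseteq \Ss_i$, so $\Aa$ sits as a common subsystem of $\Ss_1$ and $\Ss_2$.

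For the universal property, let $\Tt \subseteq \B(\Hh)$ be any operator $\Aa$-system with associated representation $\rho : \Aa \rightarrow \B(\Hh)$, and let $\phi_i : \Ss_i \rightarrow \Tt$, $i = 1,2$, be u.c.p. $\Aa$-bimodule maps. By Arveson's extension theorem each $\phi_i$ extends to a u.c.p. map $\Tilde{\phi}_i : \B_i \rightarrow \B(\Hh)$. The two extensions agree on the amalgamated copy of $\Aa$: for $a \in \Aa$,
$$ \Tilde{\phi}_i(\pi_i(a)) = \phi_i(a \cdot e) = \rho(a)\, \phi_i(e) = \rho(a), $$
which is independent of $i$. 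Hence Boca's theorem (Theorem \ref{DKversion}) applies and produces a u.c.p. map $\Phi : \B_1 *_{\Aa} \B_2 \rightarrow \B(\Hh)$ with $\Phi\arrowvert_{\B_i} = \Tilde{\phi}_i$ and $\Phi(a) = \rho(a)$ for all $a \in \Aa$.

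Restricting $\Phi$ to $\Ss_1 + \Ss_2$ gives the required factoring map. Indeed, for $s_i \in \Ss_i$,
$$ \Phi(s_1 + s_2) = \Tilde{\phi}_1(s_1) + \Tilde{\phi}_2(s_2) = \phi_1(s_1) + \phi_2(s_2) \in \Tt, $$
so $\Phi$ maps $\Ss_1 + \Ss_2$ into $\Tt$ and its restriction to $\Ss_i$ equals $\phi_i$. The $\Aa$-bimodule property is then checked by the same direct computation as in Theorem \ref{existenceofcop}: writing $a \cdot (s_1 + s_2) = a \cdot s_1 + a \cdot s_2$ with $a \cdot s_i \in \Ss_i$ and using that each $\phi_i$ is an $\Aa$-bimodule map, one obtains $\Phi(a \cdot (s_1 + s_2)) = \rho(a) \cdot \Phi(s_1 + s_2)$; alternatively one may invoke Proposition \ref{bimoduleequivalence} together with $\Phi(a) = \rho(a)\,\Phi(1)$. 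Uniqueness of the factoring map is immediate, since the map is forced to equal $\phi_1$ and $\phi_2$ on the spanning set $\Ss_1 \cup \Ss_2$.

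I expect the only genuinely delicate point to be the compatibility required by Boca's theorem: one must ensure that the two Arveson extensions restrict to a common map on the shared subalgebra $\Aa$, which is precisely why the hypothesis that the module actions are implemented by the embeddings $\pi_i$ (forcing $\pi_i(\Aa) \subseteq \Ss_i$ and $\phi_i\arrowvert_{\pi_i(\Aa)} = \rho$) is essential. A secondary bookkeeping point is the well-definedness of $\Ss_1 + \Ss_2$ as a subset of the free product: a coincidence $s_1 + s_2 = s_1' + s_2'$ forces $s_1 - s_1' = s_2' - s_2 \in \Ss_1 \cap \Ss_2 = \pi(\Aa)$, which causes no ambiguity because $\phi_1$ and $\phi_2$ agree with $\rho$ there — and in any case this is automatic, as $\Phi$ is realised as the restriction of a genuine map defined on all of $\B_1 *_{\Aa} \B_2$.
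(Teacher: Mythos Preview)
Your proof is correct and follows essentially the same route as the paper's: extend the $\phi_i$ to $\B_i$ via Arveson, observe that the extensions agree on the amalgamated copy of $\Aa$, apply Boca's theorem to obtain $\Phi$ on $\B_1*_{\Aa}\B_2$, and restrict to $\Ss_1+\Ss_2$. The paper's version is a one-paragraph sketch that simply points back to Theorem~\ref{existenceofcop}; you have filled in the bookkeeping (the verification that the Arveson extensions agree on $\Aa$, the $\Aa$-bimodule property, and uniqueness) more explicitly than the paper does.
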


\begin{definition}[Coproduct of operator $\Aa$-systems] \label{defAcoprod}
Let $ \Ss_{1}$ and $ \Ss_{2}$ be two faithful operator $ \Aa$-systems. The coproduct of $\Ss_{1}$ and $\Ss_{2}$ is the unique faithful operator $ \Aa$-system $\Ss_{1} \oplus_{\Aa} \Ss_{2}$, along with unital complete order embeddings $\phi_{i}:\Ss_{i} \hookrightarrow \Ss_{1} \oplus_{\Aa} \Ss_{2} $, $i= 1, 2$ that are also $\Aa$-bimodule maps,  such that the following universal property holds: For every operator $\Aa$-system $ \Rr $ and u.c.p. $\Aa$-bimodule maps $ \psi_{i} : \Ss_{i} \rightarrow \Rr$, $ i=1, 2$, there exists a unique u.c.p. $\Aa$-bimodule map $ \Psi :\Ss_{1} \oplus_{\Aa} \Ss_{2} \rightarrow \Rr $ such that $ \Psi \circ \phi_{i}= \psi_{i}$ for $ i =1, 2$. 
\end{definition}
 It is a standard consequence of the universal property, that the coproduct of two operator $\Aa$-systems is unique up to a complete order isomorphism.
Theorem \ref{existenceofcop}  proves the existence of operator $\Aa$-system coproducts but also  generalises the operator system case  in  \cite[Theorem 5.2.]{bbf61d43a5de485bad4fe7b6c908f9a2}. 
Furthermore, as in the operator system case \cite[Section 8]{Kavruk2014}, we can give a more concrete realisation of this coproduct, in terms of quotients of operator systems.

\begin{remark}\label{sameembedding}
Suppose that we have two faithful operator $\Aa$-systems $ \Ss$ and $ \Tt$, where $\Aa$ is a unital $ \Cs$-algebra. Then, by arguing as in the proof of Theorem \ref{existenceofcop}, we may assume that  $1\in  \Aa \subseteq \Ss \cap \Tt \subseteq \Bh$ where $ \Ss$ and $\Tt$ are $\Aa$-bimodules, with module action given by multiplication of operators acting on the Hilbert space $\Hh$.
\end{remark}

\begin{theorem} \label{theorimakernel}
Let $\Aa$ be a unital $\Cs$-algebra, $ \Ss$ and $ \Tt$ be two faithful operator $ \Aa$-systems, and \[ \mathcal{J}:= \{ a\oplus (-a) : a \in \Aa\} \subseteq \Ss \oplus \Tt. \]
Then, there is an operator $\Aa$-system structure on $\Ss \oplus \Tt/ \mathcal{J} $ and up to a complete order isomorphism 
\[ \Ss \oplus \Tt/ \mathcal{J} \cong  \Ss \oplus_{\Aa} \Tt. \]
Moreover, the quotient map $ q:\Ss\oplus \Tt \rightarrow \Ss \oplus \Tt/ \mathcal{J}$ is a u.c.p. $\Aa$-bimodule map.
\begin{proof}
By Remark \ref{sameembedding}, we assume that $1\in  \Aa \subseteq \Ss \cap \Tt \subseteq \Bh$, where $ \Hh$ is a Hilbert space.
   Let 
 $ \Ss \oplus \Tt \subseteq \B(\Hh \oplus \Hh) $
be their direct sum and define
\[ \mathcal{J}:= \{ a\oplus (-a) : a \in \Aa\} \subseteq \Ss \oplus \Tt. \]
We will prove that $\Ss\oplus \Tt / \Jj$:
\begin{enumerate}
    \item is an operator system.
    \item  is an operator $\Aa$-system
    \item satisfies the universal property for operator $ \Aa$- system coproducts.
\end{enumerate}
1. \; Note that $\mathcal{J} $ is a closed, selfadjoint subspace of $ \Ss \oplus \Tt$ that  does not contain any  positive elements other than zero, so it is trivially  an order ideal.  For each $n \in \N$, we set
\begin{align*}
     D_{n} := \{ [s_{i,j}\oplus t_{i,j} +\Jj]\in \Mn(\Ss\oplus \Tt /\Jj): \exists \; a_{i,j} \in \Aa  \text{ with } [s_{i,j}]\oplus[ t_{i,j}] + [a_{i,j}]\oplus  [-a_{i,j}] \geq 0  \},
\end{align*}
and also 
\begin{align*}
    C_{n}:=\{[s_{i,j}\oplus t_{i,j} +\Jj]\in \Mn(\Ss\oplus \Tt /\Jj): \e (1\oplus 1)\otimes I_{n} +[s_{i,j}\oplus t_{i,j}+ \Jj]  \in D_{n} \;   \text{ for all } \e >0  \}.
\end{align*}
We will prove that $ (\Ss \oplus \Tt /\Jj, (C_{n})_{n\in \N}, 1\oplus 1 + \Jj )$ is a matrix ordered *-vector space and $ 1\oplus 1 + \Jj$ is an Archimedean matrix order unit. By \cite[Proposition 3.16]{tomforde_paulsen}, it suffices to prove that $ \Jj$ is a kernel, which by \cite[Lemma 3.3]{Kavruk2010QuotientsEA} is equivalent to saying that the order seminorm on $ \Ss \oplus \Tt / \Jj$ is a norm.  

\textit{Claim}: If $ x\oplus -x + \Jj \in C_{1}$ then  $ x \oplus -x \in \Jj$, i.e. $ x \in \Aa$. Indeed;  for every $ \e >0$ there exists an $ a_{\e} \in \Aa_{h}$ such that $ \e(1\oplus1) + x \oplus -x + a_{\e}\oplus -a_{\e} \geq 0$.  So, for every $ n \geq 1$, we can chose $ a_{n}\in \Aa_{h}$ so that  
\begin{align*}
    \frac{1}{n}1 + x + a_{n} \geq 0\\
    \frac{1}{n}1 -x - a_{n} \geq 0.
\end{align*}
So, $ -\frac{1}{n}1 \leq x + a_{n} \leq \frac{1}{n}1$. Thus,  (note that $x $ is hermitian) there exists a sequence $ (a_{n})_{n \in \N}\subseteq \Aa$ such that for every $ n \in \N$ it holds that $ \norm{x + a_{n}}\leq \frac{1}{n}$. Hence  $ -x = \lim_{n \to +\infty}a_{n} $ is in $\Aa$, thus $ x \in \Aa$ and consequently $ x \oplus -x \in \Jj$.

Now let $ u \in (\Ss \oplus \Tt/\Jj)_{h}$ and suppose that $ \norm{u}=0$, where $ \normdot$ is the order seminorm.  So
\begin{align*}
   \e (1\oplus1 + \Jj) + u \in D_{1}   \; \text{ and } \; \e (1\oplus1 + \Jj) - u \in D_{1}
\end{align*}
for all $ \e>0$. Write $ u= s\oplus t + \Jj$, so we have equivalently that for all $\e>0$, there exists $ a_{\e}, b_{\e} \in \Aa_{h}$ such that 
\begin{align*}
    \e(1\oplus1) + s\oplus t + a_{\e}\oplus-a_{\e} \geq 0 \; \text{ and } \;     \e(1\oplus1) - s\oplus t + b_{\e}\oplus-b_{\e} \geq 0.
\end{align*}
Equivalently $\forall \e>0, \; \; \exists a_{\e}, b_{\e} \in \Aa_{h}$ such that 
\begin{align}
    \e1 + s +a_{\e} \geq 0 \label{1}\\
    \e1 + t -a_{\e} \geq 0 \label{2}
\end{align}
and also 
\begin{align}
    \e1 - s +b_{\e} \geq 0 \label{3} \\
    \e1 - t -b_{\e} \geq 0 \label{4}.
\end{align}
Now, by adding relations \ref{1} and \ref{2} we get that $\e 1+ s+t \geq 0$ for all $ \e >0$ and therefore $ s+t \geq 0$ while if we add \ref{3} and \ref{4} we get that $ -(s+t) \geq 0$. This means that $ s+t=0$, that is, $ t=-s$.  

So,  the assumption that $ \e (1\oplus1 + \Jj) + u \in D_{1} $ for all $\e>0$, is equivalent to $ s\oplus -s +\Jj \in C_{1}$, which by the aforementioned claim implies that $ s\oplus -s \in \Jj$. This means that the order seminorm is a norm, and thus $ \Jj$ is a kernel.

2. \; Let  $ q :  \Ss \oplus \Tt \rightarrow \Ss \oplus \Tt/ \mathcal{J}  $ be the quotient map, and define the left module action by
\begin{align*}
    \Aa \; \times \;  (\Ss \oplus \Tt/ \mathcal{J} ) &\rightarrow  \Ss \oplus \Tt/ \mathcal{J}  \\
    (a, q(s_{1}\oplus s_{2}))  &\mapsto q((a\cdot s_{1}) \oplus( a \cdot s_{2})),
\end{align*}
which we denote by $ a \diamond q(s_{1}\oplus s_{2}):= q((a\cdot s_{1}) \oplus( a \cdot s_{2})) $. One may define, in a similar manner, the right module action and denote it by  $   q(s_{1}\oplus s_{2})\diamond a:= q(( s_{1}\cdot a) \oplus( s_{2} \cdot a)) $.

-This is well defined. Indeed if $s_{1},s_{1}' \in \Ss$ and $ s_{2},s_{2}' \in \Tt$ are such that $$ s_{1}\oplus s_{2} + \mathcal{J}=s_{1}'\oplus s_{2}' + \mathcal{J}, $$
i.e. $(s_{1}\oplus s_{2}) - (s_{1}'\oplus s_{2}') \in \mathcal{J} $, then there exists an $ a_{0} \in \Aa$ s.t. $s_{1} - s_{1}' = a_{0} $ and $ s_{2}- s_{2}' = -a_{0}$. But then, for all $a\in\Aa$ we have $as_{1} - as_{1}' = aa_{0} $ and $ as_{2}- as_{2}' = -aa_{0}$, or equivalently 
 $(as_1\oplus as_2) - (as_1'\oplus as_2') \in \mathcal{J} $ (since $aa_0\in\Aa$), that is 
 $$ a\diamond(s_1\oplus s_2 + \mathcal{J})
 =a\diamond (s_1'\oplus s_2' + \mathcal{J}). $$
 
- It is also easy to verify that $ (a \diamond q(s\oplus t))^{*} = q(s\oplus t)^{*} \diamond a^{*}$.

- It remains to check that the family of matrix cones $(C_{n})_{n\in \N}$ on $ \Mn(\Ss\oplus \Tt/\Jj)$ are  $\Aa$-compatible. Note first that the family $ (D_{n})_{n\in \N}$ is  $ \Aa$-compatible; indeed,  if $[s_{i,j}\oplus t_{i,j} +\Jj]\in D_{n}$ and $ B \in M_{m,n}(\Aa)$ then there exist $ A \in \Mn(\Aa)$ such that $ [s_{i,j}\oplus t_{i,j}] +A \oplus -A\geq 0 $. So, $ B \diamond [s_{i,j}\oplus t_{i,j} +\Jj] \diamond B^{*} \in D_{m} $ since  we easily see that
\begin{align*}
  ( B \cdot S \cdot B^{*} )\oplus 
  (B \cdot T \cdot B^{*}) +    (B \cdot A \cdot B^{*} )\oplus-( B \cdot  A \cdot B^{*}) \geq 0,
\end{align*}
where $ S = [s_{i,j}]$, $ T= [t_{i,j}]$ and  $B\cdot  A \cdot B^{*} \in M_{m}(\Aa) $.

Now let $[s_{i,j}\oplus t_{i,j} +\Jj]\in C_{n}$ and $ B \in M_{m,n}(\Aa)$. Note  that $B \diamond (1\oplus 1 + \Jj)_{n} \diamond B^{*} \in \Mn(\Ss\oplus \Tt/\Jj) $ is selfadjoint and since $ 1 \oplus 1 + \Jj$ is a matrix unit for the cones $ (D_{n})_{n\in \N}$, there exists an $ \e_{0} >0$ such that  
\begin{align*}
    \e_{0} (1\oplus1 +\Jj)_{m} - B \diamond (1\oplus 1 + \Jj)_{m} \diamond B^{*} \in D_{m}  
\end{align*}
equivalently
\begin{align*}
     \e_{0}(I_{m}\oplus I_{m}) - BB^{*}\oplus BB^{*} + M_{m}(\Jj) \in D_{m}.
\end{align*}
Since $[s_{i,j}\oplus t_{i,j} +\Jj]\in C_{n}$ we have that for all $ \e >0$  
\begin{align*}
    \frac{\e}{\e_{0}}(I_{n}\oplus I_{n}) + [s_{i,j}]\oplus [t_{i,j}] + \Mn(\Jj) \in D_{n}.
\end{align*}
So by the $ \Aa$-compatibility of the $D_{n}$, we have that 
\begin{align*}
    B \diamond \big( \frac{\e}{\e_{0}}(I_{n}\oplus I_{n}) + [s_{i,j}]\oplus [t_{i,j}] \big) \diamond B^{*} + M_{m}(\Jj) \in D_{m} 
    \end{align*}
    and therefore
    \begin{align*}
    \frac{\e}{\e_{0}} (BB^{*}\oplus BB^{*}) + ( B\cdot [s_{i,j}] \cdot B^{*})\oplus (B \cdot [t_{i,j}] \cdot B^{*}) + M_{m}(\Jj) \in D_{m}.
\end{align*}
Finally, if we add $\frac{\e}{\e_{0}} (\e_{0}(I_{m}\oplus I_{m}) - BB^{*}\oplus BB^{*} )$ to the latter relation, we get that 
\begin{align*}
    \e(I_{m}\oplus I_{m}) +  ( B\cdot [s_{i,j}] \cdot B^{*})\oplus (B \cdot [t_{i,j}] \cdot B^{*}) + M_{m}(\Jj) \in D_{m} 
\end{align*}
equivalently,
\begin{align*}
    \e (I_{m}\oplus I_{m}) + B \diamond ( [s_{i,j}]\oplus [t_{i,j}]) \diamond B^{*} + M_{m}(\Jj) \in D_{m},
\end{align*}
that is, $B \diamond [s_{i,j}\oplus t_{i,j} +\Jj] \diamond B^{*}\in C_{m}$, which proves the $\Aa$-compatibility of $ (C_{n})_{n\in \N}$.  

3. \; So $\Ss \oplus \Tt/ \mathcal{J} $ is an operator $ \Aa$-system, and it remains to prove that it satisfies the universal property for the coproducts. To this end, we begin with defining the maps \\
\begin{align*}
    i_{1}: \Ss \rightarrow \Ss \oplus \Tt/ \mathcal{J} \; \; \text{ and } \; \;  i_{2}: \Tt \rightarrow \Ss \oplus \Tt/ \mathcal{J} 
\end{align*}
 by 
\begin{align*}
    i_{1}(u_{1}) = 2q(u_{1}\oplus 0), \; \; \; \; \; i_{2}(u_{2})= 2 q(0 \oplus u_{2}), \; \; \; \; \; \; u_{1}\in \Ss, \; u_{2} \in \Tt.
\end{align*}
These are completely positive because the quotient map is completely positive. 
In fact, they are complete order isomorphisms. Indeed, because if $i_{1}^{(n)}([u_{i,j}]) = 2 [u_{i,j}]\oplus 0 + \Jj \in C_{n}$, then for every $ \e>0$ there exist $ [a_{i,j}] \in \Aa$ s.t. $ \e (1\oplus1) \otimes I_{n} + 2 [u_{i,j}]\oplus 0 + [a_{i,j}]\oplus -[a_{i,j}] \geq 0$. This implies that $ \e I_{n} + 2 [u_{i,j}] + [a_{i,j}] \geq 0 $ and $ \e I_{n} - [a_{i,j}] \geq 0$, so by adding those, $ 2\e I_{n} + 2 [u_{i,j}] \geq 0$, and since $ \e >0$ was arbitrary, $ [u_{i,j}] \geq 0$.
Moreover
\[ i_{1}(1) = 2 q(1 \oplus 0) = q(1 \oplus 1) = 1  \]
and the same holds for $ i_{2}$. Also, for all $ a \in \Aa$,
\[ i_{1}(a) = 2 q(a \oplus 0) = q(a \oplus a)= 2 q(0 \oplus a) = i_{2}(a),    \]
that is, $ i_{1}\arrowvert_{\Aa} = i_{2}\arrowvert_{\Aa}$ and the inclusion  $ \Aa \hookrightarrow  \Ss \oplus \Tt/ \mathcal{J} $ is well defined. Also, the inclusion maps are $ \Aa$-bimodule maps:
\[ i_{1}(a \cdot s_{1}) = 2 q (a\cdot s_{1} \oplus 0) = q (a \cdot (2s_{1}) \oplus 0) = a \diamond q(2s_{1} \oplus 0) = a \diamond i_{1}(s_{1}). \]
We are now ready to complete the proof. Let $ \Tt$ be an operator $ \Aa$-system and $ \phi_{j} : \Ss_{j} \rightarrow \Tt $, $ j=1, \;2$ be two ucp $ \Aa$-bimodule maps. The map 
\begin{align*}
    \psi : \Ss \oplus \Tt& \rightarrow \Tt \\
    s_{1} \oplus s_{2} &  \mapsto \frac{1}{2} (\phi_{1}(s_{1}) + \phi_{2}(s_{2})),
    \end{align*} 
is unital, completely positive and $ \psi(a \oplus -a ) = 0$. This means that $ \mathcal{J} \subseteq \ker \psi$ and thus, by  \cite[Proposition 3.6]{Kavruk2010QuotientsEA}, there exists a unique unital completely positive map $ \Phi :  \Ss \oplus \Tt/ \mathcal{J}  \rightarrow \Tt $ such that $ \Phi \circ q = \psi$. Also, $ \Phi \circ i_{j} = \phi_{j} $, for $ j =1, 2$ and finally, 
\begin{align*}
    \Phi( a \diamond q (s_{1} \oplus s_{2})) &= \Phi(q (a\cdot s_{1} \oplus a \cdot s_{2} ))\\
    & = \frac{1}{2} (\phi_{1}(a \cdot s_{1}) + \phi_{2}(a \cdot s_{2}))\\
    & = \frac{1}{2} (a \cdot \phi_{1}(s_{1}) + a \cdot \phi_{2}(s_{2})) \\
    & = a \cdot \Big(\frac{1}{2} (\phi_{1}(s_{1}) + \phi_{2}(s_{2})) \Big) \\
    & = a \cdot \Phi(q (s_{1} \oplus s_{2}))
\end{align*}
for every $ a \in \Aa$, $ s_{1}\in \Ss$ and $ s_{2} \in \Tt$, i.e., $ \Phi$ is an $\Aa $-bimodule map. So $\Ss_{1} \oplus \Ss_{2}/ \mathcal{J}$ is an operator $ \Aa$-system that satisfies the universal property for coproducts  and thus it is completely order isomorphic to $\Ss_{1} \oplus_{\Aa} \Ss_{2} $ by uniqueness.

\end{proof}
\end{theorem}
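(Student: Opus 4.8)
The plan is to use Remark \ref{sameembedding} to reduce to the concrete situation $1 \in \Aa \subseteq \Ss \cap \Tt \subseteq \Bh$, so that the module action becomes genuine operator multiplication, and then to verify three things in order: that $\Ss \oplus \Tt/\Jj$ is an operator system, that it carries a compatible $\Aa$-bimodule structure, and that it satisfies the coproduct universal property, after which uniqueness from Theorem \ref{existenceofcop} gives the identification $\Ss \oplus \Tt/\Jj \cong \Ss \oplus_{\Aa} \Tt$.

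First I would establish that $\Jj$ is an operator system kernel. Since $\Jj = \{a \oplus (-a) : a \in \Aa\}$ is a closed selfadjoint subspace containing no nonzero positive element, it is trivially an order ideal, so the quotient inherits the matrix cones $D_n$ and $C_n$ of Proposition \ref{matrixconesquo}. By \cite[Proposition 3.16]{tomforde_paulsen} together with \cite[Lemma 3.3]{Kavruk2010QuotientsEA}, it then suffices to show that the quotient order seminorm is a genuine norm, i.e. that any selfadjoint $u$ with $\norm{u} = 0$ already lies in $\Jj$. The crux is the claim that if $x$ is selfadjoint and $x \oplus (-x) + \Jj \in C_1$, then $x \in \Aa$: for each $n$ one can choose $a_n \in \Aa_h$ with $-\tfrac{1}{n} 1 \leq x + a_n \leq \tfrac{1}{n} 1$, whence $-x = \lim_n a_n \in \Aa$ and $x \oplus (-x) \in \Jj$. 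Writing a general seminorm-zero element as $u = s \oplus t + \Jj$ and adding the four Archimedean inequalities that witness $\e(1\oplus 1 + \Jj) \pm u \in D_1$ forces $s + t = 0$, reducing to the claim. I expect this to be the main obstacle, since it is precisely here that faithfulness and the norm-closedness of $\Aa$ are genuinely used.

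Next I would define the module action by $a \diamond q(s_1 \oplus s_2) := q(a \cdot s_1 \oplus a \cdot s_2)$, check that it is well defined on cosets (representatives differing by $a_0 \oplus (-a_0)$ are changed by $a a_0 \oplus (-a a_0) \in \Jj$), and verify the adjoint identity. The substantive remaining axiom is $\Aa$-compatibility of the cones $(C_n)$. I would first note that the $(D_n)$ are $\Aa$-compatible directly from the identity $B \cdot (A \oplus (-A)) \cdot B^* = (BAB^*) \oplus (-BAB^*) \in \Mn(\Jj)$, and then bootstrap to $(C_n)$ by an $\e$-argument: using that $1 \oplus 1 + \Jj$ is a matrix order unit for $(D_n)$ to dominate $B \diamond (1 \oplus 1 + \Jj) \diamond B^*$ by some $\e_0$, and combining this with the defining $\e/\e_0$-membership inequalities for $C_n$.

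Finally, for the universal property I would introduce $i_1(u) = 2 q(u \oplus 0)$ and $i_2(u) = 2 q(0 \oplus u)$; the factor $2$ is what makes them unital, since $2 \oplus 0 - 1 \oplus 1 = 1 \oplus (-1) \in \Jj$ gives $i_1(1) = q(1 \oplus 1) = 1$, and the same congruence shows $i_1$ and $i_2$ agree on $\Aa$. A short $\e$-argument of the same flavour as the kernel claim shows each $i_k$ is a complete order embedding, and they are visibly $\Aa$-bimodule maps. Given any operator $\Aa$-system $\Rr$ and u.c.p. $\Aa$-bimodule maps $\psi_1, \psi_2$, the averaged map $s_1 \oplus s_2 \mapsto \tfrac{1}{2}(\psi_1(s_1) + \psi_2(s_2))$ is u.c.p. and annihilates $\Jj$, hence by \cite[Proposition 3.6]{Kavruk2010QuotientsEA} factors as $\Phi \circ q$ for a unique u.c.p. $\Phi$; one then checks $\Phi \circ i_k = \psi_k$ and that $\Phi$ is an $\Aa$-bimodule map, and the uniqueness part of Theorem \ref{existenceofcop} completes the identification with $\Ss \oplus_{\Aa} \Tt$.
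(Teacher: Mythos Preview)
Your proposal is correct and follows essentially the same route as the paper's proof: the same reduction via Remark \ref{sameembedding}, the same verification that $\Jj$ is a kernel by showing the order seminorm is a norm (via the claim on $x\oplus(-x)$ and the four-inequality argument forcing $s+t=0$), the same definition and $\Aa$-compatibility check for the module action (first on the $D_n$, then bootstrapped to the $C_n$ via an $\e/\e_0$ trick), and the same inclusions $i_k$ with the factor $2$ together with the averaged map $\tfrac12(\psi_1+\psi_2)$ to verify the universal property. There are no substantive differences.
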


So, for two faithful operator $ \Aa$-systems $ \Ss$ and $ \Tt$ we can always form their coproduct as a quotient operator system by a kernel $\Jj$. Moreover, an interesting situation occurs when the $ \Cs$-algebra $\Aa$ is also a von Neumann algebra. As we will see, in this case $ \Jj$ is a completely order proximinal kernel, i.e. we don't need to enlarge cones in order for the unit to be Archimedean.

\begin{proposition} \label{corollaryvN}
Let $\Aa$ be a unital $ \Cs$-algebra, $\Ss$, $\Tt$ be two operator $\Aa$-systems and $ \Hh$ be a Hilbert space such that $1\in  \Aa \subseteq \Ss \cap \Tt \subseteq \Bh$. If also $ \Aa$ is WOT-closed in $ \Bh$  then, the subspace 
\[    \mathcal{J} = \{ a \oplus -a : a \in \Aa \} \]
is a completely order proximinal kernel in  $ \Ss \oplus \Tt $ so that  $(\Ss \oplus \Tt /\mathcal{J}, \{D_{n} \}_{n\in \N}, 1\oplus1 + \Jj)$ is an operator system.

\begin{proof}
By Theorem \ref{theorimakernel}, we know that  $ (\Ss \oplus \Tt /\Jj, (C_{n})_{n\in \N}, 1\oplus 1 + \Jj )$ is an operator system. We will prove here that when $ \Aa$ is WOT-closed, then $ \Jj$ is a completely order proximinal kernel, i.e. $ C_{n}(\Ss \oplus \Tt /\Jj) = D_{n}(\Ss \oplus \Tt /\Jj)$, for all $ n\in \N$. It suffices to show that $ C_{1}(\Ss \oplus \Tt /\Jj) \subseteq D_{1}(\Ss \oplus \Tt /\Jj) $, equivalently,   that $ 1\oplus1 + \mathcal{J}$ is an Archimedean  order unit. 
Then, $ \Jj$ will  be a completely order proximinal kernel. Indeed, identify 
  \[ \Mn(\Ss\oplus \Tt/\Jj) = \Mn(\Ss \oplus \Tt)/ \Mn(\Jj) \]
  and note that $ \Mn(\Jj)$ is also a WOT-closed selfadjoint subspace  with no other positive elements than 0. Furthermore, $ \Mn(\Jj) = \{ [a_{i,j}]\oplus [-a_{i,j}]: [a_{i,j}] \in \Mn(\Aa) \}$, via the canonical shuffle, so by repeating the proof at each matrix level we will be done. 

Let $ s \oplus t + \mathcal{J} \in \Ss \oplus \Tt /\mathcal{J} $ be such that 
\begin{align*}
    \e (1 \oplus 1 + \mathcal{J}) + s \oplus t + \mathcal{J} \in D_{1}, \; \; \forall \e >0, 
\end{align*}
where $ s=s^{*}$ and $ t = t^{*}$. Equivalently,
$  \e (1 \oplus 1) + s \oplus t + \mathcal{J} \in D_{1}$, for every $ \e >0$. So, for every $ \e >0$ there exists an $ a_{\e} \in \Aa_{h}$ such that 
\[ \e (1 \oplus 1) + s \oplus t + a_{\e} \oplus -a_{\e} \geq 0.  \]
Define 
\begin{align*}
    X_{n} = \{ a \in \Aa_{h} : \frac{1}{n} (1 \oplus 1) + s \oplus t + a \oplus -a \geq 0 \} 
\end{align*}
and note that $ X_{m} \subseteq X_{n}$ for every two integers $ 1\leq n\leq m$. 

\textit{Claim}: $ \{ X_{n} \}_{n \geq 1}$ is a  decreasing sequence of non empty, WOT-compact sets.\\
Clearly the sets $ X_{n}$ are  non-empty for every $ n \in \N$. We will show each $X_{n}$ is WOT-closed and norm-bounded, thus WOT-compact. Let $ n \geq 1$ and  $ (a_{i})_{i\in I}$ be a net in $ X_{n}$ such that $ a_{i} \xrightarrow{WOT} a $. Of course, since $\Aa$  is WOT-closed, we have $ a \in \Aa$. Also, 
\begin{align*}
    \sca{\big(a\oplus -a + \frac{1}{n} \cdot 1\oplus 1 + s\oplus t\big)(u), u}= \lim_{i} \sca{\big(a_{i}\oplus -a_{i} + \frac{1}{n} \cdot 1\oplus 1 + s\oplus t\big)(u), u}\geq 0
\end{align*}
for every $ u  \in \Hh \oplus \Hh$. This means that $ a \in X_{n}$ and so $ X_{n}$ is WOT-closed. \\
Let $ n =1$, we will show that $ X_{1}$ is $\normdot$-bounded, which implies that all the $X_{n}$ are bounded. Let $ a \in X_{1}$, that is, 
\[ 1\oplus 1 + s\oplus t + a \oplus -a \geq 0 \]
i.e.

\begin{align*}
    1 + s + a \geq0 \; \; \text{ and } \; \; 1 +t -a \geq 0.
\end{align*}
So this means that 
\begin{align*}
        -(1+s) \leq a \leq 1+t,
\end{align*}
and since $ a \in \Aa$ is hermitian we obtain that 
\begin{align*}
    \norm{a} \leq \max\{ \norm{s+1}, \norm{t+1}\}.
\end{align*}
 We deduce that $ \sup_{a\in X_{1}}\norm{a} \leq  \max\{ \norm{s+1}, \norm{t+1}\} $, that is, $X_{1}$ is bounded. Finally, we have that $ \{X_{n} \}_{ n \geq 1 }$ is a  decreasing sequence of non empty WOT-compact sets and consequently it has non-empty intersection, i.e., there exists an $ a_{0} \in \bigcap_{n\geq 1}X_{n}$. So, for this $ a_{0} \in \Aa$, 
\begin{align*}
    \frac{1}{n} \cdot 1\oplus 1 + s\oplus t + a_{0}\oplus-a_{0} \geq 0 \; \; \; \text{ for all } n >0,
\end{align*}
but this means that $s\oplus t + a_{0}\oplus-a_{0} \geq 0$, equivalently, $ s\oplus t + \mathcal{J} \in D_{1}$.
\end{proof}
\end{proposition}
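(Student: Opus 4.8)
The plan is to prove the stronger statement $C_n = D_n$ for every $n \in \N$, from which the conclusion is immediate: the family $\{D_n\}$ then coincides with $\{C_n\}$, which is already known to endow $\Ss \oplus \Tt/\Jj$ with an operator system structure (with Archimedean matrix order unit $1\oplus 1 + \Jj$) by Theorem \ref{theorimakernel}. The inclusion $D_n \subseteq C_n$ is automatic, since for $x \in D_n$ a witness for $x$ is simultaneously a witness for $\e e_n + x$ for every $\e > 0$; so the content lies entirely in showing $C_n \subseteq D_n$. Moreover, via the canonical shuffle I would identify $\Mn(\Ss \oplus \Tt/\Jj)$ with $\Mn(\Ss \oplus \Tt)/\Mn(\Jj)$, where $\Mn(\Jj) = \set{[a_{i,j}] \oplus [-a_{i,j}] : [a_{i,j}] \in \Mn(\Aa)}$. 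Since $\Aa$ WOT-closed means $\Mn(\Aa)$ is again WOT-closed in $\Mn(\Bh) \cong \B(\Hh^{(n)})$, the argument at a general matrix level is formally identical to the scalar one, and it suffices to prove $C_1 \subseteq D_1$, i.e. that $1 \oplus 1 + \Jj$ is an Archimedean order unit for $D_1$.

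So let $s \oplus t + \Jj \in C_1$ with $s = s^*$, $t = t^*$; by definition, for every $\e > 0$ there is a hermitian witness $a_\e \in \Aa_h$ with $\e(1\oplus 1) + s\oplus t + a_\e \oplus (-a_\e) \geq 0$. The goal is to replace this family of approximate witnesses by a single exact one. To this end I would introduce, for each $n \geq 1$, the witness set
\begin{align*}
X_n := \set{a \in \Aa_h : \tfrac{1}{n}(1 \oplus 1) + s \oplus t + a \oplus (-a) \geq 0}.
\end{align*}
Each $X_n$ is nonempty (take $\e = 1/n$), and the sequence is decreasing, $X_m \subseteq X_n$ for $n \leq m$, because replacing $\tfrac1m$ by the larger $\tfrac1n$ only weakens the positivity constraint.

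The crux is to upgrade these sets to compact ones. First I would check that each $X_n$ is WOT-closed: if $a_i \to a$ in the weak operator topology with $a_i \in X_n$, then $a \in \Aa$ precisely because $\Aa$ is WOT-closed, and the positivity survives the limit since $\sca{(\tfrac1n (1\oplus1) + s\oplus t + a\oplus (-a))u, u} = \lim_i \sca{(\tfrac1n (1\oplus1) + s\oplus t + a_i\oplus (-a_i))u, u} \geq 0$ for every $u \in \Hh \oplus \Hh$. Next I would bound $X_1$ in norm: from $a \in X_1$ one reads off $1 + s + a \geq 0$ and $1 + t - a \geq 0$, hence $-(1+s) \leq a \leq 1 + t$, and since $a$ is hermitian this forces $\norm{a} \leq \max\set{\norm{s+1}, \norm{t+1}}$; as $X_n \subseteq X_1$ for all $n$, the whole family is uniformly bounded. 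A bounded WOT-closed subset of $\Bh$ is WOT-compact, so $\set{X_n}_{n\geq 1}$ is a decreasing sequence of nonempty WOT-compact sets and hence has nonempty intersection. Choosing $a_0 \in \bigcap_{n} X_n$ yields $\tfrac1n(1\oplus1) + s\oplus t + a_0\oplus (-a_0) \geq 0$ for all $n$, whence $s\oplus t + a_0\oplus (-a_0) \geq 0$, i.e. $s \oplus t + \Jj \in D_1$.

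I expect the main obstacle to be exactly this passage from approximate to exact witnesses. For a general unital $\Cs$-algebra $\Aa$ there is no reason a common $a_0$ should exist, and the bounded, norm-closed witness sets $X_n$ need not be compact. It is the von Neumann (WOT-closed) hypothesis that supplies the missing compactness, letting the finite-intersection property deliver the single witness; this is the only place in the argument where the hypothesis is genuinely used.
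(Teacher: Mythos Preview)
Your proposal is correct and follows essentially the same approach as the paper: reduce to the scalar level via the canonical shuffle (using that $\Mn(\Aa)$ is again WOT-closed), introduce the witness sets $X_n$, show they form a decreasing sequence of nonempty WOT-compact subsets of $\Aa_h$ via the same norm bound $-(1+s)\leq a\leq 1+t$, and extract a common witness $a_0$ from the intersection. The argument, including the identification of where the WOT-closed hypothesis is actually used, matches the paper's proof line by line.
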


Note that in the above result, the operator $\Aa$-systems are automatically faithful operator $\Aa$-systems.
Now we state an algebraic result for the coproduct, that highlights the amalgamation that takes place inside the coproduct.

\begin{corollary} \label{intersection}
Let $\Aa$ be a unital $ \Cs$-algebra and $\Ss$, $\Tt$ be two faithful operator $\Aa$-systems. Let also $ \Ss \oplus_{\Aa} \Tt$ be their coproduct over $\Aa$ and $i_{1}$, $ i_{2}$ the associated complete order embeddings, from $\Ss$, $\Tt$, respectively, into the coproduct. 
 Then  
\begin{align*}
    i_{1}(\Ss) \cap i_{2}(\Tt) = \Aa,
\end{align*}
completely order isomorphically.
    Thus, we may omit the inclusion maps and assume that $\Ss$, $\Tt$ and $\Aa$ are subsystems of $ \Ss \oplus_{\Aa} \Tt$ such that $ \Ss\cap \Tt =\Aa$.
    \begin{proof}
   By Remark \ref{sameembedding}, assume that $\Aa \subseteq \Ss \cap \Tt \subseteq \Bh$, where $ \Hh$ is a Hilbert space and write the coproduct as in Theorem \ref{theorimakernel}
   \[ \Ss \oplus_{\Aa} \Tt = \Ss \oplus \Tt/\Jj.  \]
   Now let $$ u= i_{1}(s) = i_{2}(t) \in  i_{1}(\Ss) \cap i_{2}(\Tt), $$ i.e.  $ u= 0\oplus 2t +\Jj= 2s\oplus 0 + \Jj$. This implies that $ 2( s \oplus -t) \in \Jj$, that is, $a:= s=t  \in \Aa$ and thus  $$ u = i_{1}(a)= 2a\oplus 0 +\Jj = a\oplus a +\Jj = 0 \oplus 2a  + \Jj = i_{2}(a).$$ So, if we define 
   \begin{align*}
        j: \Aa &\rightarrow \Ss\oplus \Tt /\Jj \\
             a & \mapsto a \oplus a +\Jj
   \end{align*}
   we immediately see that $ j(a) = i_{1}(a)= i_{2}(a)$ for all $ a \in \Aa$ and that $j$ is a complete order embedding.
    \end{proof}
\end{corollary}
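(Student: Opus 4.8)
The plan is to work entirely with the concrete quotient model of the coproduct supplied by Theorem \ref{theorimakernel}. First I would invoke Remark \ref{sameembedding} to assume $1 \in \Aa \subseteq \Ss \cap \Tt \subseteq \Bh$ for a single Hilbert space $\Hh$, and then write $\Ss \oplus_{\Aa} \Tt = \Ss \oplus \Tt / \Jj$ with $\Jj = \{a \oplus (-a) : a \in \Aa\}$. Under this identification the canonical embeddings take the explicit form $i_{1}(s) = 2q(s \oplus 0)$ and $i_{2}(t) = 2q(0 \oplus t)$ recorded in the proof of Theorem \ref{theorimakernel}, where $q$ denotes the quotient map.

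The heart of the argument is then a short computation inside the quotient. I would take $u \in i_{1}(\Ss) \cap i_{2}(\Tt)$, say $u = i_{1}(s) = i_{2}(t)$ with $s \in \Ss$ and $t \in \Tt$. The equality $2s \oplus 0 + \Jj = 0 \oplus 2t + \Jj$ says that $2s \oplus (-2t)$ lies in $\Jj$, and by the very shape of $\Jj$ this forces $2s = 2t = a$ for some $a \in \Aa$; hence $s = t \in \Aa$. Writing $a := s$, one checks that $u = i_{1}(a) = 2a \oplus 0 + \Jj = a \oplus a + \Jj = 0 \oplus 2a + \Jj = i_{2}(a)$, the inner equalities holding because $a \oplus (-a) \in \Jj$. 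This exhibits $u$ as the image of an element of $\Aa$ under the common map $j : \Aa \rightarrow \Ss \oplus \Tt / \Jj$ given by $j(a) = a \oplus a + \Jj$, and conversely every such $j(a)$ lies in the intersection, so $j(\Aa) = i_{1}(\Ss) \cap i_{2}(\Tt)$.

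It then remains to promote this set-theoretic identity to a complete order isomorphism, i.e.\ to see that $j$ is a complete order embedding so that the intersection is genuinely a copy of $\Aa$ as an operator $\Aa$-system. I would obtain this for free: since $\Aa$ sits inside $\Ss$ as an operator subsystem and $i_{1}$ is a complete order embedding by Theorem \ref{theorimakernel}, the restriction $i_{1}\arrowvert_{\Aa} = j$ is again a complete order embedding, with no new positivity estimate required.

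I do not anticipate a serious obstacle, precisely because Theorem \ref{theorimakernel} has already carried out all the analytic work, namely that $\Jj$ is a kernel and that $i_{1}, i_{2}$ are complete order embeddings. The only point demanding care is the bookkeeping with the factor $2$ in the definitions of $i_{1}$ and $i_{2}$, together with the observation that membership of $2s \oplus (-2t)$ in $\Jj$ is exactly what pins $s$ and $t$ down to a single element of $\Aa$; this is immediate from the explicit description $\Jj = \{a \oplus (-a)\}$ and not from any deeper structural fact.
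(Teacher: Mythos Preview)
Your proposal is correct and follows essentially the same route as the paper's proof: both work in the quotient model $\Ss \oplus \Tt / \Jj$ from Theorem \ref{theorimakernel}, read off from $2s \oplus (-2t) \in \Jj$ that $s = t \in \Aa$, and identify the intersection with the image of $j(a) = a \oplus a + \Jj$. Your justification that $j$ is a complete order embedding (as the restriction of $i_{1}$) makes explicit what the paper leaves to the reader; the only cosmetic slip is that you first write $2s = 2t = a$ and then redefine $a := s$, which should be reconciled.
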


\subsection{REMARKS AND EXAMPLES}

\begin{definition}[Conditional expectation] \label{conditionalexpectation}
Let $ 1 \in \Tt \subseteq \Ss $ be two operator systems. A conditional expectation $ \phi : \Ss \rightarrow \Tt$, is a u.c.p. map such that $\phi(t)= t$, for all $ t \in \Tt$.
\end{definition}

\begin{remark}
 Note that, when $ \Tt=\Aa$ is a $\Cs$-algebra then by Proposition \ref{bimoduleequivalence}, a conditional expectation $ \phi : \Ss \rightarrow \Aa$ is a u.c.p. $\Aa$-bimodule map. This is also equivalent to being a u.c.p. projection onto $ \Aa$. 
\end{remark}

\begin{remark}
Corollary \ref{intersection} gives the following convenient form for the coproduct of two faithful operator $\Aa$-systems $\Ss$ and $\Tt$. Suppose that there exist two conditional expectations $ E_{S}: \Ss \rightarrow \Aa$ and $ E_{T}: \Tt \rightarrow \Aa$ and define $\Ss_{0}:=\ker E_{S} $ and $ \Tt_{0}:= \ker E_{T}$. Then, as linear spaces
\[ \Ss \oplus_{\Aa} \Tt \cong \Aa \oplus \; \Ss_{0} \oplus  \Tt_{0},\]
where the sums are direct.
   
Indeed, decompose as $\Ss = \Aa \oplus \Ss_{0} $, $ \Tt= \Aa \oplus \Tt_{0}$ and so, by Corollary \ref{intersection} we may assume that $\Ss$ and $\Tt$ are subsystems of $\Ss\oplus_{\Aa} \Tt$ and $ \Ss \cap \Tt = \Aa$, i.e. $ \Ss_{0} \cap \Tt_{0} =\{0\}.$
\end{remark}
We also have another way to interpret the $\Aa$-coproduct algebraically when there exist conditional expectations onto $\Aa$.
\begin{proposition}
Let $ \Ss$, $\Tt$ be two faithful operator $\Aa$-systems and $ E_{\Ss}$, $ E_{\Tt}$ two conditional expectations from $\Ss$ and $\Tt$ respectively onto $\Aa$. Let $\Rr := \{ s \oplus t \in \Ss \oplus \Tt: E_{\Ss}(s) = E_{\Tt}(t) \}$, be an operator subsystem of $ \Ss \oplus \Tt$.
Then as linear spaces 
\[ \Rr \cong \Ss \oplus_{\Aa} \Tt \]
via the restriction of the quotient map to $\Rr$.
However, this map need not always be a complete order isomorphism, since in fact its inverse may even fail to be positive. \end{proposition}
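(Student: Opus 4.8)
The plan is to establish the linear isomorphism first and then to defeat positivity of the inverse by a single small example.

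For the linear statement, recall from Theorem~\ref{theorimakernel} that $\ker q = \Jj = \{a\oplus(-a):a\in\Aa\}$ and $\Ss\oplus_{\Aa}\Tt\cong\Ss\oplus\Tt/\Jj$. First I would check injectivity of $q|_{\Rr}$ by computing $\Rr\cap\Jj$: if $a\oplus(-a)\in\Rr$ then $E_{\Ss}(a)=E_{\Tt}(-a)$, and since both conditional expectations fix $\Aa$ pointwise this forces $a=-a$, hence $a=0$; thus $\Rr\cap\Jj=\{0\}$ and $q|_{\Rr}$ is injective. For surjectivity, given $s\oplus t\in\Ss\oplus\Tt$ I would set $a=\tfrac12\big(E_{\Ss}(s)-E_{\Tt}(t)\big)\in\Aa$ and verify that $(s-a)\oplus(t+a)\in\Rr$, because $E_{\Ss}(s-a)=\tfrac12(E_{\Ss}(s)+E_{\Tt}(t))=E_{\Tt}(t+a)$; since $(s\oplus t)-\big((s-a)\oplus(t+a)\big)=a\oplus(-a)\in\Jj$, this representative has the same image under $q$. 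Hence $q|_{\Rr}$ is a linear bijection, and being the restriction of the completely positive quotient map to a subsystem it is completely positive.

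For the failure of positivity of the inverse, I would produce an explicit counterexample in the simplest setting $\Aa=\C$, where operator $\Aa$-systems are ordinary operator systems and $\oplus_{\Aa}=\oplus_{1}$. Take $\Ss=\Tt=M_{2}(\C)$ and let $E_{\Ss}=E_{\Tt}=E$ be the normalized-trace conditional expectation $E(x)=\tfrac12\trace(x)\,I_{2}$ onto $\C I_{2}$. Choosing $s=\begin{pmatrix}2&0\\0&0\end{pmatrix}$ and $t=0$, the witness $\lambda=0\in\Aa_{h}$ gives $s+\lambda\geq0$ and $t-\lambda\geq0$, so $q(s\oplus t)\in D_{1}\subseteq C_{1}$ is positive in the coproduct (here in fact $D_{1}=C_{1}$ by Proposition~\ref{corollaryvN}, as $\C$ is a von Neumann algebra). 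However the unique $\Rr$-representative is $(s-a)\oplus(t+a)$ with $a=\tfrac12(E(s)-E(t))=\tfrac12 I_{2}$, and $s-a=\begin{pmatrix}3/2&0\\0&-1/2\end{pmatrix}$ is not positive; hence $(q|_{\Rr})^{-1}\big(q(s\oplus t)\big)\notin(\Ss\oplus\Tt)^{+}$. This shows $(q|_{\Rr})^{-1}$ is not even positive, so $q|_{\Rr}$ is not a complete order isomorphism.

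The routine part is the linear bijection; the real content is the counterexample, and the point to get right is the order structure of the coproduct. Positivity of $q(s\oplus t)$ is decided by the cone $D_{1}$, which allows a single global shift $\lambda\in\Aa$ balancing the two legs, whereas positivity of the canonical $\Rr$-representative forces the particular shift $a=\tfrac12(E_{\Ss}(s)-E_{\Tt}(t))$ dictated by the expectations. The example above is exactly the smallest case where a $\lambda$ witnessing positivity in the quotient exists but cannot be taken equal to the expectation-balancing $a$, which is precisely why the inverse fails to be positive.
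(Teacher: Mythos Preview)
Your proof is correct. The linear-isomorphism half matches the paper's argument (you supply the explicit injectivity/surjectivity computation that the paper leaves implicit), so there is nothing to compare there.

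The counterexample, however, is genuinely different. The paper works over $\Aa=\mathcal{D}_{2}$ with the diagonal projection $E:M_{2}\to\mathcal{D}_{2}$, and chooses an element $s\oplus t$ that already lies in $\Rr$ (same diagonal) but is not positive, while a nonzero shift $I_{2}\in\mathcal{D}_{2}$ witnesses positivity in the quotient. You instead take the minimal case $\Aa=\C$ with the normalised trace, start from a positive element $s\oplus 0$, and observe that its unique $\Rr$-representative $(s-\tfrac12 I_{2})\oplus\tfrac12 I_{2}$ fails positivity. Your example is smaller (scalar $\Aa$, $t=0$, and no off-diagonal entries), and it isolates the mechanism cleanly: positivity in $D_{1}$ is decided by some shift $\lambda\in\Aa_{h}$, whereas the $\Rr$-preimage forces the specific shift $a=\tfrac12(E_{\Ss}(s)-E_{\Tt}(t))$. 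The paper's example has the minor advantage that $s\oplus t$ sits in $\Rr$ from the outset, so one compares $s\oplus t$ directly with its image; your version requires first passing to the $\Rr$-representative, but is otherwise more economical.
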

\begin{proof}
Let $ q : \Ss \oplus \Tt \rightarrow \Ss \oplus_{\Aa} \Tt $ denote the quotient map. 
 Then, its restriction  
\[ q\arrowvert_{\Rr}: \Rr \rightarrow \Ss \oplus_{\Aa} \Tt ,\] 
is a unital bijection and a completely positive map (since the quotient map is always unital and completely positive).

But, there exist operator $ \Aa$-systems, for which the map $q\arrowvert_{\Rr} $ does not have a positive inverse. Indeed,  consider $ \Ss = \Tt = M_{2}$, as operator systems that are bimodules over $ \mathcal{D}_{2}$, the algebra of diagonal 2 by 2 matrices. Their $\mathcal{D}_{2}$-coproduct exists and there exist  conditional expectations $ E : M_{2} \rightarrow \mathcal{D}_{2}$, namely,  the projection to the diagonal, their $\mathcal{D}_{2}$-coproduct is represented as $$ M_{2}\oplus_{\mathcal{D}_{2}} M_{2} = M_{2}\oplus M_{2}/ \{ D \oplus -D: D\in \mathcal{D}_{2} \}.$$  Let 
\begin{align*}
    s= \begin{bmatrix} 2 & \frac{5}{2}\\
    \frac{5}{2} & 2
    \end{bmatrix} \; \; \text{ and } \; \;  t= \begin{bmatrix} 2& 0\\
    0 & 2
    \end{bmatrix},
\end{align*}
and note that $ s \oplus t \in \Rr$, since they have the same diagonal.
Then, \begin{align*}
        \begin{bmatrix} 2 & \frac{5}{2}\\
    \frac{5}{2} & 2
    \end{bmatrix} \oplus  \begin{bmatrix} 2& 0\\
    0 &2  \end{bmatrix} + \mathcal{J} \geq 0,
    \end{align*}
    since if we chose $ I_{2} \in \mathcal{D}_{2}$, then \begin{align*}
    s+ I_{2} = \begin{bmatrix}
        3 & \frac{5}{2}\\
    \frac{5}{2} & 3
    \end{bmatrix} \geq 0 \; \; \text{ and } \; \; t - I_{2} = \begin{bmatrix}
        1&0 \\
        0&1
    \end{bmatrix} \geq 0,
    \end{align*}
but \begin{align*}
        \begin{bmatrix} 2 & \frac{5}{2}\\
    \frac{5}{2} & 2
    \end{bmatrix} \oplus  \begin{bmatrix} 2& 0\\
    0 &2  \end{bmatrix} \ngeq 0
    \end{align*} 
     since the matrix $  \begin{bmatrix} 2 & \frac{5}{2}\\
    \frac{5}{2} & 2
    \end{bmatrix}$ isn't positive.

\end{proof}

By the above remarks, we have the following.
\begin{corollary}
Suppose that $\Ss$ and $\Tt$ are two finite dimensional faithful operator $\Aa$-systems, that are bimodules over a finite dimensional $\Cs$-algebra $ \Aa$.  Then,  $ \dim(\Ss\oplus_{\Aa}\Tt)= \dim(\Ss) + \dim(\Tt) - \dim(\Aa)$. 
\end{corollary}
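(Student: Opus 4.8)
The plan is to reduce the statement to a pure dimension count built on the linear-space decomposition recorded in the Remark that establishes, whenever conditional expectations onto $\Aa$ exist, the vector-space isomorphism
\[ \Ss \oplus_{\Aa} \Tt \cong \Aa \oplus \Ss_{0} \oplus \Tt_{0}, \]
where $\Ss_{0} = \ker E_{S}$, $\Tt_{0} = \ker E_{T}$, and all three summands are in direct sum. Since $\Aa$, $\Ss$ and $\Tt$ are assumed finite dimensional, every space occurring here is finite dimensional, so I may take dimensions across this isomorphism to obtain
\[ \dim(\Ss \oplus_{\Aa} \Tt) = \dim(\Aa) + \dim(\Ss_{0}) + \dim(\Tt_{0}). \]

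The next step is to express $\dim(\Ss_{0})$ and $\dim(\Tt_{0})$ in terms of the original data. For this I would use that a conditional expectation $E_{S} : \Ss \to \Aa$ is, by Definition \ref{conditionalexpectation} and the Remark following it, a u.c.p. map that restricts to the identity on $\Aa$; in particular it is an idempotent linear map with $\mathrm{ran}(E_{S}) = \Aa$. Every idempotent on a finite dimensional vector space splits the space as the direct sum of its range and its kernel, so
\[ \Ss = \mathrm{ran}(E_{S}) \oplus \ker(E_{S}) = \Aa \oplus \Ss_{0}, \]
giving $\dim(\Ss_{0}) = \dim(\Ss) - \dim(\Aa)$. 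The identical reasoning applied to $E_{T}$ yields $\dim(\Tt_{0}) = \dim(\Tt) - \dim(\Aa)$.

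Finally I would substitute these two identities into the displayed dimension formula:
\[ \dim(\Ss \oplus_{\Aa} \Tt) = \dim(\Aa) + \big(\dim(\Ss) - \dim(\Aa)\big) + \big(\dim(\Tt) - \dim(\Aa)\big), \]
after which two of the three copies of $\dim(\Aa)$ cancel and the right-hand side collapses to $\dim(\Ss) + \dim(\Tt) - \dim(\Aa)$, which is the claimed value.

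This argument is essentially bookkeeping and presents no genuine obstacle once the linear isomorphism $\Ss \oplus_{\Aa} \Tt \cong \Aa \oplus \Ss_{0} \oplus \Tt_{0}$ is invoked; the only points requiring a moment's care are that this decomposition is asserted only at the level of vector spaces, which is all the statement about dimensions needs, and that the finite dimensionality hypotheses guarantee all dimensions are finite so that the subtractions are meaningful.
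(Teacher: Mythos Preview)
Your argument is correct and is exactly the approach the paper intends: the corollary is stated immediately after the Remark giving the linear-space decomposition $\Ss \oplus_{\Aa} \Tt \cong \Aa \oplus \Ss_{0} \oplus \Tt_{0}$, and the paper simply says ``By the above remarks, we have the following'' with no further proof. Your dimension count using $\Ss = \Aa \oplus \Ss_{0}$ and $\Tt = \Aa \oplus \Tt_{0}$ is precisely the routine bookkeeping the paper leaves implicit.
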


\hfill

Graph operator systems are a nice example of (faithful) operator $\Aa$-systems.

\begin{definition}
Let $G=(V,E)$ be a graph on $n$ vertices. We may define an operator system $\Ss_{G} \subseteq \Mn$ as follows, 
\[ \Ss_{G}= \{ T \in \Mn : T_{i,j} \neq 0 \Rightarrow
i=j \; \text{ or } \; (i,j) \in E \}.   \]
We will call such an operator system $\Ss_{G} $, the  graph operator system of $ G$.
\end{definition}

Then $ \Ss_{G}$ is a $\Dd_{n}$-bimodule and conversely, if $ \Ss \subseteq \Mn$ is an operator system that is a $\Dd_{n}$-bimodule then there exists a graph such that $ \Ss= \Ss_{G}$. Indeed, let $ V= [n]$ and define  
$ E=\{(i,j) : i \neq j \; \text{ and } \; E_{i,i}\mathcal{S}E_{j,j} \neq \{0\}\}$. Note that we can write $\Ss_{G}$ as $\Ss_{G}= \spann \{E_{i,j}: i=j \; \text{ or } \;  (i,j) \in E \}$.

\begin{example}
Let $G_{k}= (V_{k},E_{k})$, $ k= 1,2 $ be two graphs on $ n$ vertices. Consider their graph operator systems
\[ \Ss_{G_{k}}= \spann \{E_{i,j}: i=j \; \text{ or } \;  (i,j) \in E_{k} \} \subseteq M_{n}, \; \; \; \;  k = 1,2, \]
where $E_{i,j}  $ are the matrix units of $M_{n}$. We know that the operator systems $ \Ss_{k}$ are $ \mathcal{D}_{n}$-bimodules, where $ \mathcal{D}_{n}$ is the $\Cs$-algebra of diagonal matrices. So they are operator $ \mathcal{D}_{n}$-systems.  Now, define 
\begin{align*}
    \mathcal{J} = \{ D \oplus -D : D \in  \mathcal{D}_{n}\}
\end{align*}
and note that this is a kernel. Indeed, by proposition \ref{corollaryvN} ($\Dd_{n}$ is a von Neumann algebra) $ \Jj$ is in fact a completely proximinal kernel. So, we have that 
\begin{align*}
    \Ss_{G_{1}} \oplus_{\mathcal{D}_{n}} \Ss_{G_{2}} \cong_{c.o.i.} \Ss_{G_{1}} \oplus \Ss_{G_{2}}/ \mathcal{J},  
\end{align*}
where $\Ss_{G_{1}} \oplus_{\mathcal{D}_{n}} \Ss_{G_{2}}$ is the operator $ \mathcal{D}_{n}$-system coproduct of $ \Ss_{G_{1}}$ and $  \Ss_{G_{2}}$.

\end{example}
 
 So, the coproduct of two graph operator systems is an operator $ \Dd_{n}$-system. But is it a graph operator system?
We will answer this question in Section \ref{graphs}, after establishing first some more theory.

\begin{remark}
In fact, we can extend the graph operator system case to the infinite dimensional case. Indeed, let $(X,\mu)$ be a $\sigma$-finite measure space.  Since $  L^{\infty}(X,\mu) $ can be identified as the $\Cs$-algebra of multiplication operators in $\B(L^{2}(X,\mu))$ we have that $\B(L^{2}(X,\mu))$ is a bimodule over $  L^{\infty}(X,\mu) $. Since  $  L^{\infty}(X,\mu) $ is  a von Neumann algebra, by Proposition \ref{corollaryvN},  if we consider two completely order isomorphic copies of the operator system $ \B(L^{2}(X,\mu))$ we may write their operator $L^{\infty}(X,\mu)$-system coproduct   as
 \[ \B(L^{2}) \oplus_{L^{\infty}} \B(L^{2}) \cong_{c.o.i.} \B(L^{2}) \oplus \B(L^{2})/ \{M_{f} \oplus - M_{f}: f \in L^{\infty}   \}. \]
We will not deal with this case in this work, we intend, however, to study the matter in a subsequent work.
\end{remark}

\section{C*-COVERS OF OPERATOR A-SYSTEMS} \label{sectioncovers}
Let $ \Ss$ be an operator system. A $\Cs$-cover of $\Ss$ is a pair $ ( \Cc, i)$, where $ \Cc$ is a unital $\Cs$-algebra and $ i: \Ss \rightarrow \Cc$ is a unital completely isometric map such that $ i(\Ss)$ generates $ \Cc$ as a $\Cs$-algebra.

\subsection{A UNIVERSAL C*-COVER}

For an operator system $\Ss$, its universal $\Cs$-algebra \cite{KIRCHBERG1998324} is defined as the unique $ \Cs$-algebra $C_{u}^{*}(\Ss)$ generated by $\Ss$ such that for any other $\Cs$-algebra $\B$ and u.c.p. map $ \phi :\Ss \rightarrow \B$, there exists a *-homomorphism $ \pi_{\phi} :C_{u}^{*}(\Ss) \rightarrow \B $ that extends $\phi$. We will  extend this notion to the category of operator $ \Aa$-systems. The  difference is that now, our unique $\Cs$-algebra will be an $ \Aa$-bimodule, and the u.c.p. maps and *-homomorphisms, will be $\Aa$-bimodule maps.

Let $ \Ss$ be an operator system that is a bimodule over a unital $\Cs$-algebra $\Aa$. We begin by constructing the free *-algebra $\Aa$-bimodule:
\begin{align*}
    \mathcal{F}(\Ss):= \Ss \; \oplus \; ( \Ss \otimes_{\Aa}\Ss) \; \oplus \; ( \Ss \otimes_{\Aa}\Ss\otimes_{\Aa}\Ss) \; \oplus \;  \cdots,
\end{align*} where $\Ss^{\otimes (n)}=  \Ss \otimes_{\Aa} \cdots \otimes_{\Aa}\Ss $ is the algebraic tensor product of $\Aa$ bimodules (see for example \cite[section 1.3]{Hazewinkel}). The space  $\mathcal{F}(\Ss)$  becomes a *-algebra if we define multiplication by: 
\begin{align*}
   ( s_{1}\otimes \cdots \otimes s_{n}, s'_{1} \otimes \cdots \otimes s'_{m}) \mapsto s_{1}\otimes \cdots \otimes s_{n}\otimes s'_{1} \otimes \cdots \otimes s'_{m},
\end{align*}
 the *-operation by :
\begin{align*}
    (s_{1} \otimes \cdots \otimes  s_{n})^{*}= s_{n}^{*} \otimes \cdots \otimes s_{1}^{*}
\end{align*}
and module action by:
\begin{align*}
    a \cdot ( s_{1}\otimes \cdots \otimes s_{n})= (a \cdot s_{1}) \otimes \cdots \otimes s_{n} \\
      ( s_{1}\otimes \cdots \otimes s_{n}) \cdot a =   s_{1} \otimes \cdots \otimes (s_{n} \cdot a) 
\end{align*}
and extend all linearly to the tensor product and then to the algebraic direct sum. 
Now, let  $ \phi : \Ss \rightarrow \B(\Hh) $ be a unital completely positive map that is also an $\Aa$-bimodule map in the sense that there exists a *-homomorphism $ \rho: \Aa \rightarrow \Bh$ such that for all $ s\in \Ss$ and $ a \in \Aa$,
\[ \phi(a \cdot s) = \rho(a) \phi(s). \]
Note that by Theorem \ref{asystemsrepresentation} such a map always exists.
Any such map
gives rise to a  *-homomorphism $\pi_{\phi} : \mathcal{F}(\Ss) \rightarrow \B(\Hh)$ by setting 
\begin{align*}
    \pi_{\phi}( s_{1}\otimes \cdots \otimes s_{n}) = \phi(s_{1}) \cdots \phi(s_{n}),
\end{align*}
and extending linearly to the tensor product and then to the direct sum. Observe that $\pi_\phi$ is an $\Aa$-module map. Now let $ u \in \mathcal{F}(\Ss) $ and define $\norm{u}_{\mathcal{F}(\Ss)} := \sup_{\phi}\norm{\pi_{\phi}(u)}$, where the supremum is taken over all such unital completely positive $ \Aa$-bimodule maps $\phi$.

Finally, define  $C^{*}_{u,\Aa}(\Ss) $ to be the completion of the quotient of $\mathcal{F}(\Ss) $ by the ideal 
$$\mathcal{N}=\{ u \in \mathcal{F}(\Ss) : \norm{u}_{\mathcal{F}(\Ss) }=0 \}.$$

By construction  $C^{*}_{u,\Aa}(\Ss)  $, possesses a universal property which can be stated as follows:
If $ \B$ is a unital $ \Cs$-algebra that is also a bimodule over $ \Aa$ and $ \phi : \Ss \rightarrow \B$ is a u.c.p. $ \Aa$-bimodule map, then there exists an $\Aa$-bimodule *-homomorphism  $\pi :C^{*}_{u,\Aa}(\Ss)  \rightarrow \B $ such that $ \pi\arrowvert_{\Ss} = \phi$. It is not hard to see that $C^{*}_{u,\Aa}(\Ss)$ is unique with respect to this universal property.

Sometimes we may identify the $ \Ss$ with its image inside $C^{*}_{u,\Aa}(\Ss) $, and consider it as an operator subsystem. 
\begin{proposition} \label{suminuniversals}
Let $\Ss_{1}$ and $ \Ss_{2}$ be two faithful operator $\Aa$-systems. Then, their operator $\Aa$-system coproduct $    \Ss_{1}\oplus_{\Aa} \Ss_{2}$ is completely order isomorphic to the operator subsystem
\begin{align*}
     \Ss_{1} +\Ss_{2}:= \{s_{1}+s_{2}: s_{1}\in \Ss_{1}, \; s_{2} \in \Ss_{2} \} \subseteq C^{*}_{u,\Aa}(\Ss_{1})*_{\Aa}C^{*}_{u,\Aa}(\Ss_{2}),  
\end{align*}
where $C^{*}_{u,\Aa}(\Ss_{1})*_{\Aa}C^{*}_{u,\Aa}(\Ss_{2})$ is the $\Cs$-algebra free product amalgamated over $\Aa$.
\begin{proof}
   Let $\Tt\subseteq \B(\Kk)$ be an operator $ \Aa$-system and let also
   \begin{align*}
       \phi_{1}:\Ss_{1} \rightarrow \Tt \subseteq \B(\Kk)\\
       \phi_{2}:\Ss_{2} \rightarrow \Tt \subseteq \B(\Kk)
   \end{align*}
   be two u.c.p. $\Aa$-bimodule maps. By the universal property of the universal $\Aa$-bimodule $\Cs$-algebras, there exist two $\Aa$-bimodule *-homomorphisms 
  
   \begin{align*}
       \pi_{\phi_1} : C^*_{u,\Aa}(\Ss_1) \rightarrow \B(\Kk)\qquad \text{and }\qquad
       \pi_{\phi_2} : C^{*}_{u,\Aa}(\Ss_2) \rightarrow \B(\Kk),
   \end{align*}
   that extend $ \phi_{1}$ and $ \phi_{2}$ respectively and restrict to a common *-representation of $\Aa$. Now, from the universal property of the amalgamated free product, there exists a *-homomorphism 
   $$ \pi: C^{*}_{u,\Aa}(\Ss_{1})*_{\Aa}C^{*}_{u,\Aa}(\Ss_{2}) \rightarrow \B(\Kk) $$ 
   such that $ \pi\arrowvert_{C^{*}_{u,\Aa}(\Ss_{i})} = \pi_{\phi_{i}}$, for both $ i =1, 2$. Finally, set 
   \begin{align*}
       \Phi:=\pi\arrowvert_{\Ss_{1}+\Ss_{2}}: \Ss_{1}+\Ss_{2} \rightarrow \B(\Kk),
   \end{align*}
   and note that this is a unital completely positive $\Aa$-bimodule map such that $\Phi\arrowvert_{\Ss_{i}}= \phi_{i}$, for $ i=1, 2$ and whose image lies inside $\Tt$.
   \end{proof}
\end{proposition}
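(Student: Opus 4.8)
The plan is to show directly that $\Ss_1 + \Ss_2$, equipped with the inclusions, satisfies the universal property of the coproduct, and then to invoke the uniqueness part of Theorem \ref{existenceofcop}. The whole argument is a matter of chaining the universal property of $C^{*}_{u,\Aa}(\Ss_i)$ with that of the amalgamated free product, in exact parallel with the proof of Theorem \ref{existenceofcop}.

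First I would record the preliminary facts that make the statement meaningful. Because each $\Ss_i$ is a \emph{faithful} operator $\Aa$-system, the copy of $\Aa$ inside $\Ss_i$ (via $a \mapsto a\cdot e$) survives in $C^{*}_{u,\Aa}(\Ss_i)$ as a genuine unital $C^{*}$-subalgebra: for every defining u.c.p. $\Aa$-bimodule map $\phi$ with associated representation $\rho$ one has $\pi_\phi(a) = \rho(a)$ and $\pi_\phi(a \otimes a') = \rho(a)\rho(a')$, so the relation $a\otimes a' = aa'$ holds after passing to the quotient, and a faithful $\rho$ (guaranteed by Remark \ref{concreteopasyst}) makes this copy of $\Aa$ injective. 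Thus $\Aa$ is a common unital $C^{*}$-subalgebra of $C^{*}_{u,\Aa}(\Ss_1)$ and $C^{*}_{u,\Aa}(\Ss_2)$, the amalgamated free product $C^{*}_{u,\Aa}(\Ss_1)*_{\Aa}C^{*}_{u,\Aa}(\Ss_2)$ is defined, and $\Ss_1 + \Ss_2$ is an operator $\Aa$-subsystem of it.

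The core step is then as sketched above: given an operator $\Aa$-system $\Tt \subseteq \B(\Kk)$ with representation $\rho : \Aa \to \B(\Kk)$ and u.c.p. $\Aa$-bimodule maps $\phi_i : \Ss_i \to \Tt$, I would use the universal property of $C^{*}_{u,\Aa}(\Ss_i)$ to extend each $\phi_i$ to an $\Aa$-bimodule $*$-homomorphism $\pi_{\phi_i}$ on $C^{*}_{u,\Aa}(\Ss_i)$, amalgamate these into a single $*$-homomorphism $\pi$ via the universal property of the free product, and set $\Phi := \pi|_{\Ss_1 + \Ss_2}$, which is u.c.p., is an $\Aa$-bimodule map, restricts to $\phi_i$ on $\Ss_i$, and lands in $\Tt$. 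Uniqueness of $\Phi$ is immediate since $\Ss_1 + \Ss_2$ is linearly spanned by $\Ss_1$ and $\Ss_2$, and uniqueness of the coproduct (Theorem \ref{existenceofcop}) finishes the proof.

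The one point requiring genuine care — the main obstacle — is the compatibility over $\Aa$: I must check that the two extensions $\pi_{\phi_1}$ and $\pi_{\phi_2}$ restrict to \emph{the same} $*$-representation of $\Aa$ on $\Kk$, for otherwise the universal property of the amalgamated free product cannot be applied. This holds because a u.c.p. $\Aa$-bimodule map into $\B(\Kk)$ forces $\phi_i(a) = \rho(a)$ for $a \in \Aa$ (Theorem \ref{asystemsrepresentation} together with the bimodule condition), so both $\pi_{\phi_i}$ agree with $\rho$ on the common copy of $\Aa$. As an alternative route, after additionally verifying that each inclusion $\Ss_i \hookrightarrow C^{*}_{u,\Aa}(\Ss_i)$ is a complete order embedding — which follows since the universal seminorm is a supremum over u.c.p. $\Aa$-bimodule maps that includes a faithful complete order embedding of $\Ss_i$ — the statement becomes a direct instance of Proposition \ref{intheircsalgebras} applied with $\B_i = C^{*}_{u,\Aa}(\Ss_i)$.
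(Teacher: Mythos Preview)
Your proposal is correct and follows essentially the same route as the paper: extend each $\phi_i$ to a $*$-homomorphism on $C^{*}_{u,\Aa}(\Ss_i)$ via that algebra's universal property, amalgamate via the universal property of the free product, and restrict to $\Ss_1+\Ss_2$. You are in fact more careful than the paper in justifying why $\Aa$ sits faithfully inside each $C^{*}_{u,\Aa}(\Ss_i)$ and why the two extensions agree on $\Aa$, and your observation that the result is also a direct instance of Proposition~\ref{intheircsalgebras} with $\B_i = C^{*}_{u,\Aa}(\Ss_i)$ is a valid shortcut.
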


\begin{proposition}
Let $ \Ss_{1}$ and $\Ss_{2}$ be two faithful operator $\Aa$-systems.  Then,
\begin{align*}
  C^{*}_{u,\Aa}(\Ss_{1}\oplus_{\Aa} \Ss_{2}) \cong   C^{*}_{u,\Aa}(\Ss_{1})*_{\Aa}C^{*}_{u,\Aa}(\Ss_{2}).
\end{align*}
\begin{proof}
  We will prove that $C^{*}_{u,\Aa}(\Ss_{1})*_{\Aa}C^{*}_{u,\Aa}(\Ss_{2})$ possesses the universal property  for the universal $\Cs$-algebra $C^{*}_{u,\Aa}(\Ss_{1}\oplus_{\Aa} \Ss_{2})$; the result will follow by uniqueness.  Using  Proposition \ref{suminuniversals}, we identify $\Ss_{1}\oplus_{\Aa} \Ss_{2} $ with the operator subsystem $ \Ss_{1}+\Ss_{2}$ of the amalgamated $\Cs$-algebra free product  $  C^{*}_{u,\Aa}(\Ss_{1})*_{\Aa}C^{*}_{u,\Aa}(\Ss_{2})$ and denote by 
   \begin{align*}
       i_{1}:\Ss_{1} \rightarrow \Ss_{1}\oplus_{\Aa} \Ss_{2} \subseteq   C^{*}_{u,\Aa}(\Ss_{1})*_{\Aa}C^{*}_{u,\Aa}(\Ss_{2})\\
       i_{2}:\Ss_{2} \rightarrow \Ss_{1}\oplus_{\Aa} \Ss_{2} \subseteq   C^{*}_{u,\Aa}(\Ss_{1})*_{\Aa}C^{*}_{u,\Aa}(\Ss_{2}) 
   \end{align*}
   the inclusion maps of $\Ss_{1}$ and $\Ss_{2}$ respectively. 
   
Let $\B$ be a unital $\Cs$-algebra that is also an $\Aa$-bimodule. Let also
\begin{align*}
    \Phi : \Ss_{1}\oplus_{\Aa} \Ss_{2} \rightarrow \B
\end{align*}
be a u.c.p. $\Aa$-bimodule map. Then,

\begin{align*}
    \Phi \circ i_{k} :\Ss_{k} \rightarrow \B, \; \; \; k=1,2
\end{align*}
   are also  u.c.p. $\Aa$-bimodule maps.
   By the universal property of the universal $\Cs$-algebras $C^{*}_{u,\Aa}(\Ss_{i}) $, there exist *-homomorphisms 
   \begin{align*}
        \rho_{1}: C^{*}_{u,\Aa}(\Ss_{1}) \rightarrow   \B\\
      \rho_{2}: C^{*}_{u,\Aa}(\Ss_{2}) \rightarrow   \B
   \end{align*}
   that extend $\Phi \circ i_{1}$ and $ \Phi \circ i_{2}$ respectively. Also $ \rho_{1}$ and $ \rho_{2}$ agree on $\Aa$, so by the universal property of amalgamated free products, there exists a *-homomorphism
   \begin{align*}
       \rho : C^{*}_{u,\Aa}(\Ss_{1})*_{\Aa}C^{*}_{u,\Aa}(\Ss_{2}) \rightarrow \B
   \end{align*}
such that $ \rho\arrowvert_{C^{*}_{u,\Aa}(\Ss_{k})}= \rho_{k} $, $ k=1,2$.   
   
Finally, for each $k=1,2$,
\begin{align*}
    \rho\arrowvert_{\Ss_{k}}= \rho_{k}\arrowvert_{\Ss_{k}}= \Phi \circ i_{k},
\end{align*}
which implies that $ \rho\arrowvert_{\Ss_{1}\oplus_{\Aa} \Ss_{2}} = \Phi$, i.e., $\rho$ extends $\Phi$ and thus the proof is complete.

\end{proof}
\end{proposition}

\subsection{THE C*-ENVELOPE}

We can now deal with the other famous $\Cs$-cover of operator systems, that is, the $\Cs$-envelope of an operator system. 

The $\Cs$-envelope $ C^{*}_{e}(\Ss)$ of an operator system $ \Ss$ is defined to be the $ \Cs$-algebra generated by $ \Ss$ in its injective envelope $ I(\Ss)$ \cite{1979773}. The $\Cs$-envelope $ C^{*}_{e}(\Ss)$ is the unique $\Cs$-cover having  the following universal property: For any $\Cs$-cover $ i : \Ss \hookrightarrow \Aa$ there exists a unique unital *-homomorphism  $\pi : \Aa \rightarrow C^{*}_{e}(\Ss)$ such that $ \pi(i(s)) =s$ for every $ s \in \Ss$.

In \cite{Duncan2007CenvelopesOU}, Duncan proved that under certain assumptions, the $\Cs$-envelope of the amalgamated free product of two operator algebras $\Aa_1$ and $\Aa_2$ is *-isomorphic to the amalgamated free product of their respective $\Cs$-envelopes. See also \cite[Section 5.3]{Davidson2014SemicrossedPO}. It was assumed that  for every *-representation $ \pi $ of their $\Cs$-envelope $ C^{*}_{e}(\Aa_{i})$, the restriction $\pi\arrowvert_{\Aa_{i}}$ to $\Aa_{i}$ has the unique extension property. This is called  \textit{hyperrigidity} by Arveson  \cite{Arveson2008TheNC}.

Here we prove the analogous result  for operator systems, i.e. we assume the operator systems to be hyperrigid and prove that the $C^{*}$-envelope of the coproduct of two operator systems is *-isomorphic to the  free product of their $\Cs$-envelopes, amalgamated over the unit. In fact we will prove the result for operator $\Aa$-systems (Theorem \ref{C*envelopesofAsystems}) and the operator system case  will follow as a special case.

In \cite{Kavruk2014}, Proposition 5.6, the author showed that if an operator system $ \Ss \subseteq \Aa$ contains enough unitaries to generate $\Aa$, then $ \Aa \cong C^{*}_{e}(\Ss)$. This was used in the following result.
\begin{theorem}\cite[Theorem 5.2]{bbf61d43a5de485bad4fe7b6c908f9a2} \label{unitaries}
Let $ \Ss_{1} \subseteq \Aa_{1}$ and  $ \Ss_{2} \subseteq \Aa_{2}$ be two operator systems,  where $ \Aa_{1}, \Aa_{2}$ are unital $\Cs$-algebras. Then, if  the operator systems $ \Ss_{i}$, $ i=1,2$ contain enough unitaries to generate  $ \Aa_{i}$ as $\Cs$-algebras, then $ \Ss_{1}\oplus_{1} \Ss_{2}$ contain enough unitaries to generate $\Aa_{1} *_{1} \Aa_{2}$ as a $\Cs$-algebra, and 
\[ C^{*}_{e}( \Ss_{1}\oplus_{1} \Ss_{2})  \cong \Aa_{1} *_{1} \Aa_{2}. \]
\end{theorem}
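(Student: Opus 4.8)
The plan is to realise the coproduct $\Ss_1 \oplus_1 \Ss_2$ concretely as an operator subsystem of the unital free product $\Aa_1 *_1 \Aa_2$, to exhibit enough unitaries inside it, and then to invoke Kavruk's criterion \cite[Proposition 5.6]{Kavruk2014}. By the operator system specialisation of Proposition \ref{intheircsalgebras} (taking $\Aa = \C$, so that the amalgamated free product reduces to the ordinary unital free product, and noting that operator $\C$-systems are automatically faithful), I would identify
\[ \Ss_1 \oplus_1 \Ss_2 \cong \Ss_1 + \Ss_2 := \{ s_1 + s_2 : s_1 \in \Ss_1, \ s_2 \in \Ss_2 \} \subseteq \Aa_1 *_1 \Aa_2, \]
the identification being the complete order isomorphism associated with the canonical inclusions $\Ss_i \hookrightarrow \Aa_i \hookrightarrow \Aa_1 *_1 \Aa_2$.

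First I would record that the canonical embeddings $j_i : \Aa_i \hookrightarrow \Aa_1 *_1 \Aa_2$ are unital injective $*$-homomorphisms, and hence carry unitaries to unitaries; consequently every unitary $u \in \Ss_i \subseteq \Aa_i$ is sent to a unitary $j_i(u)$ lying in $\Ss_1 + \Ss_2$. Next I would argue that the totality of these unitaries generates $\Aa_1 *_1 \Aa_2$ as a $\Cs$-algebra: by hypothesis the unitaries of $\Ss_1$ generate $\Aa_1$ and those of $\Ss_2$ generate $\Aa_2$, so the $\Cs$-algebra they jointly generate contains $j_1(\Aa_1) \cup j_2(\Aa_2)$, and this union generates the free product by its very construction. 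This establishes the first assertion, namely that $\Ss_1 \oplus_1 \Ss_2$, viewed inside $\Aa_1 *_1 \Aa_2$, contains enough unitaries to generate $\Aa_1 *_1 \Aa_2$.

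With this in hand, the second assertion follows immediately from the cited result of Kavruk: whenever an operator system sits inside a unital $\Cs$-algebra and contains enough unitaries to generate it, that ambient $\Cs$-algebra is already its $\Cs$-envelope. Applying this to $\Ss_1 \oplus_1 \Ss_2 \subseteq \Aa_1 *_1 \Aa_2$ yields $C^{*}_{e}(\Ss_1 \oplus_1 \Ss_2) \cong \Aa_1 *_1 \Aa_2$.

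The main obstacle I anticipate is not any single hard estimate but rather making the identifications fully precise: one must verify that the abstractly defined coproduct genuinely embeds as $\Ss_1 + \Ss_2$ inside $\Aa_1 *_1 \Aa_2$ in a manner that restricts to the original inclusions on each factor, so that the unitaries of $\Ss_i$ are truly available, as unitaries of the free product, inside the coproduct. This is exactly the content of the realisation result (Proposition \ref{intheircsalgebras}); once it is correctly invoked, the unitary-counting step and the appeal to Kavruk's criterion are routine.
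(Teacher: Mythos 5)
Your proof is correct and follows exactly the route the paper attributes to the original source: realise $\Ss_{1}\oplus_{1}\Ss_{2}$ as $\Ss_{1}+\Ss_{2}$ inside $\Aa_{1}*_{1}\Aa_{2}$, note that the unitaries of the $\Ss_{i}$ survive as unitaries whose generated $\Cs$-algebra contains $\Aa_{1}\cup\Aa_{2}$ and hence all of the free product, and conclude via Kavruk's criterion that an operator system containing enough unitaries has the ambient $\Cs$-algebra as its $\Cs$-envelope. The paper states this theorem as a citation without reproducing a proof, but it explicitly remarks that Kavruk's Proposition 5.6 ``was used in the following result,'' so your argument is precisely the intended one.
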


The above result provides a sufficient condition for the $\Cs$-envelope of the coproduct of two operator systems to be *-isomorphic with the amalgamated free product of their $\Cs$-envelopes, that is, to assume that the operator systems contain the unitaries that generate their $\Cs$-envelopes. We replace this condition with hyperrigidity for operator systems, therefore strengthening the above result.

\begin{definition}\cite[Unique extension property for u.c.p. maps]{Arveson2003NOTESOT}
Let $ \Ss$ be an operator system and $ (\Aa, i )$ be a $ \Cs$-cover. A u.c.p. map $ \phi: \Ss \rightarrow \Bh$, is said to have the unique extension property, if it has a unique extension to a completely positive map  $ \Tilde{\phi}: \Aa \rightarrow \Bh$ that is also a *-representation. 
\end{definition}

\begin{definition}\cite[Hyperrigidity]{Arveson2008TheNC}
Let $\Ss \subseteq \Aa $ be an operator system and $ (\Aa, i )$ be a $ \Cs$-cover. The operator system $\Ss$ is called hyperrigid in $ \Aa$, if for every representation $\pi: \Aa \rightarrow \Bh$, its restriction $\pi \arrowvert_{\Ss}$  has the unique extension property. 
\end{definition}

\begin{definition}\cite{Kavruk2010QuotientsEA}
Let $\Ss\subseteq \Aa$ be an operator subsystem of the unital $\Cs$-algebra $ \Aa$. We say that $ \Ss$ contains enough unitaries in $\Aa$, if the unitaries in $\Ss$ generate $\Aa$ as a $\Cs$-algebra.
\end{definition}

\begin{proposition} \cite[Lemma 9.3]{Kavruk2010QuotientsEA} \label{homoextension}
Let $\Ss \subseteq \Aa$ be an operator system that contains enough unitaries in  $ \Aa$. Then, every u.c.p. map $ \phi : \Ss \rightarrow \Bh$ that preserves unitaries, extends uniquely to a unital completely positive map $ \Tilde{\phi} : \Aa \rightarrow \Bh$, which is also a *-homomorphism. That is, any such u.c.p. map $ \phi : \Ss \rightarrow \Bh$ has the unique extension property.
\end{proposition}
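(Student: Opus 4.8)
The plan is to combine Arveson's extension theorem with the theory of the multiplicative domain of a completely positive map. First I would invoke Arveson's theorem: since $\Bh$ is injective and $\Ss \subseteq \Aa$ is an operator system, the u.c.p. map $\phi$ admits at least one u.c.p. extension $\tilde{\phi} : \Aa \rightarrow \Bh$. The whole content of the proposition then reduces to two claims about any such extension, namely that it is automatically a $*$-homomorphism and that it is the only u.c.p. extension.

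For the first claim I would work with the multiplicative domain
\[ \mathcal{M}_{\tilde{\phi}} = \{ a \in \Aa : \tilde{\phi}(a^{*}a)=\tilde{\phi}(a)^{*}\tilde{\phi}(a) \text{ and } \tilde{\phi}(aa^{*})=\tilde{\phi}(a)\tilde{\phi}(a)^{*} \}. \]
Because $\tilde{\phi}$ is completely positive, hence $2$-positive and so satisfies the Kadison--Schwarz inequality, $\mathcal{M}_{\tilde{\phi}}$ is a norm-closed unital $\Cs$-subalgebra of $\Aa$ over which $\tilde{\phi}$ acts as a bimodule map, i.e. $\tilde{\phi}(ab)=\tilde{\phi}(a)\tilde{\phi}(b)$ and $\tilde{\phi}(ba)=\tilde{\phi}(b)\tilde{\phi}(a)$ whenever $a \in \mathcal{M}_{\tilde{\phi}}$ and $b \in \Aa$; this is standard (see \cite{paulsen_2003}). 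The key observation is that every unitary $u \in \Ss$ lies in $\mathcal{M}_{\tilde{\phi}}$: since $\phi$ preserves unitaries, $\tilde{\phi}(u)=\phi(u)$ is a unitary, so $\tilde{\phi}(u)^{*}\tilde{\phi}(u)=I=\tilde{\phi}(I)=\tilde{\phi}(u^{*}u)$, and symmetrically $\tilde{\phi}(u)\tilde{\phi}(u)^{*}=\tilde{\phi}(uu^{*})$, using only that $\tilde{\phi}$ is unital and that $u$ and $\phi(u)$ are unitary.

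Since $\Ss$ contains enough unitaries in $\Aa$, its unitaries generate $\Aa$ as a $\Cs$-algebra; as $\mathcal{M}_{\tilde{\phi}}$ is a norm-closed $*$-subalgebra containing all of them, it must be all of $\Aa$. Hence $\tilde{\phi}$ is multiplicative and $*$-preserving on $\Aa$, i.e. a $*$-homomorphism. For uniqueness I would note that this argument applies verbatim to any u.c.p. extension, so every u.c.p. extension of $\phi$ is a $*$-homomorphism; two $*$-homomorphisms that agree on the generating unitaries of $\Ss$ necessarily coincide on the $\Cs$-algebra they generate, namely $\Aa$. Thus the extension is unique, which is precisely the unique extension property. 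I expect the only genuinely delicate input to be the standard fact that the multiplicative domain is a $\Cs$-subalgebra on which $\tilde{\phi}$ is a bimodule map; once that is granted, the passage from the generating unitaries to all of $\Aa$ and the uniqueness statement are routine.
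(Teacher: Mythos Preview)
Your argument is correct and is precisely the standard multiplicative-domain proof of this fact. Note, however, that the paper does not supply its own proof of this proposition: it simply quotes the result from \cite[Lemma 9.3]{Kavruk2010QuotientsEA}, so there is nothing to compare beyond observing that your write-up reproduces the argument behind that reference.
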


The two following results can be found in \cite{HARRIS20192156}. We include them with somewhat different proofs.

\begin{proposition}  \label{hyperrigidimpliesunitaries}
Let $\Ss \subseteq \Aa$ be an operator system that contains enough unitaries in its $\Cs$-cover $\Aa$, then $\Ss$ is hyperrigid in $\Aa$. 
\begin{proof}
Let $ \pi : \Aa \rightarrow \Bh$ be a representation of $\Aa$, and $ \phi :\Aa \rightarrow \Bh$, be a u.c.p. map such that $ \phi\arrowvert_{\Ss}= \pi\arrowvert_{\Ss}$. Then, for every unitary $ u \in \Ss$, we have that $ \phi(u)= \pi(u)$, which means that $ \phi(u)$ is also unitary ($\pi$ is a *-homomorphism). So, by Proposition \ref{homoextension}, $ \phi\arrowvert_{\Ss}$ extends uniquely to a u.c.p. map on $\Aa$ that is also a *-homomorphism and since $\pi$ is another such extension, $ \phi = \pi$ on all of $\Aa$.
\end{proof}
\end{proposition}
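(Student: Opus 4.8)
The plan is to verify hyperrigidity straight from the definition, reducing the unique extension property of an arbitrary restricted representation to the unitary-preservation criterion of Proposition \ref{homoextension}. So I fix an arbitrary unital $*$-representation $\pi : \Aa \rightarrow \Bh$ and must show that its restriction $\pi\arrowvert_{\Ss}$ has the unique extension property. To that end I take an arbitrary u.c.p. map $\phi : \Aa \rightarrow \Bh$ with $\phi\arrowvert_{\Ss} = \pi\arrowvert_{\Ss}$, and the goal becomes to force $\phi = \pi$ on all of $\Aa$.

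The crucial observation is that the restricted map $\phi\arrowvert_{\Ss}$ preserves unitaries. Indeed, for every unitary $u \in \Ss$ we have $\phi(u) = \pi(u)$, and $\pi(u)$ is a unitary because $\pi$ is a $*$-homomorphism. Since $\Ss$ contains enough unitaries in $\Aa$ by hypothesis, I may now apply Proposition \ref{homoextension} to the unitary-preserving u.c.p. map $\phi\arrowvert_{\Ss}$: it yields that $\phi\arrowvert_{\Ss}$ has the unique extension property, so that its only completely positive extension to $\Aa$ is a $*$-homomorphism.

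Finally I invoke this uniqueness directly. Both $\pi$ and $\phi$ are completely positive extensions to all of $\Aa$ of the one common map $\phi\arrowvert_{\Ss} = \pi\arrowvert_{\Ss}$, so by the uniqueness clause just obtained they must coincide, giving $\phi = \pi$. As $\pi$ was an arbitrary representation of $\Aa$, every representation restricts to a map on $\Ss$ with the unique extension property, which is precisely the assertion that $\Ss$ is hyperrigid in $\Aa$. There is no genuine obstacle here: the argument is an essentially immediate reduction to Proposition \ref{homoextension}, and the only point that must be explicitly recorded is that $\phi\arrowvert_{\Ss}$ satisfies the hypothesis of that proposition, namely that it sends unitaries to unitaries — and this is forced the moment one notes that $\phi$ and $\pi$ agree on $\Ss$ while $\pi$ carries unitaries to unitaries.
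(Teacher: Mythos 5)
Your argument is correct and is essentially identical to the paper's own proof: both fix a representation $\pi$ and a u.c.p.\ map $\phi$ agreeing with $\pi$ on $\Ss$, observe that $\phi\arrowvert_{\Ss}$ preserves unitaries because $\pi$ is a *-homomorphism, and then invoke Proposition \ref{homoextension} to force $\phi=\pi$. No differences worth noting.
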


The following proposition implies that if an operator system is hyperrigid in one of its $\Cs$-covers, it is necessarily hyperrigid in its $ \Cs$-envelope. In fact:
\begin{proposition}\label{allcstarcovers}
Let $\Ss$ be an operator system and $ (\Aa,i)$ be a $\Cs$-cover of $\Ss$. If $\Ss$ is hyperrigid in $\Aa$, then $\Aa \cong C^{*}_{e}(\Ss)$ via a *-isomorphism. In fact, the  conclusion holds if we only assume the existence of a faithful representation $ \rho :\Aa \rightarrow \Bh$ such that $\rho\arrowvert_{\Ss} $ has the unique extension property.
\begin{proof}
First, identify $\Ss  $ with its image inside its $\Cs$-envelope, that is, assume $ \Ss \subseteq C^{*}_{e}(\Ss) $. Let $\pi : \Aa \rightarrow C^{*}_{e}(\Ss)$ be the associated surjective *-homomorphism such that $ \pi ( i(s) )= s$ for all $ s \in \Ss$. Let $\rho : \Aa \rightarrow \Bh$ be a faithful *-representation of the unital $\Cs$-algebra $\Aa$. By Arveson's extension theorem, there is a  u.c.p. map $ \phi : C^{*}_{e}(\Ss) \rightarrow \Bh$ that extends $ \rho \circ i : \Ss \rightarrow \Bh$. Now we see that $ \phi \circ \pi (i(s))= \phi(s)= \rho(i(s))$ for all $ s \in \Ss$. But since $ \Ss$ is hyperrigid in $\Aa$, $ \rho$ has the unique extension property and thus we must have that $ \phi \circ \pi = \rho$ in all of $ \Aa$. Now $ \phi \circ \pi $ is injective, since  $\rho$ is faithful, and thus $ \pi $ is also injective. Together with being surjective we conclude that $ \pi$ is a *-isomorphism.
\end{proof}
\end{proposition}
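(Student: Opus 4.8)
The plan is to construct the canonical comparison $*$-homomorphism from the cover $\Aa$ onto the envelope $C^{*}_{e}(\Ss)$ and then show that hyperrigidity forces it to be injective. First I would identify $\Ss$ with its image inside its $\Cs$-envelope, so that $\Ss \subseteq C^{*}_{e}(\Ss)$, and apply the universal property of the $\Cs$-envelope to the cover $(\Aa,i)$: this produces a surjective unital $*$-homomorphism $\pi : \Aa \rightarrow C^{*}_{e}(\Ss)$ with $\pi(i(s)) = s$ for all $s \in \Ss$. Surjectivity is automatic, since $\pi(\Aa)$ is a $\Cs$-subalgebra of $C^{*}_{e}(\Ss)$ containing the generating set $\Ss$. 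The whole problem is thereby reduced to proving that $\pi$ is injective.

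To obtain injectivity I would transport faithfulness through the unique extension property. I would fix a faithful $*$-representation $\rho : \Aa \rightarrow \Bh$ (for instance the universal representation). Under the stronger hypothesis that $\Ss$ is hyperrigid in $\Aa$, the restriction $\rho\arrowvert_{\Ss}$ automatically has the unique extension property; this is exactly the weaker hypothesis in the ``in fact'' clause, so it suffices to work with a single faithful $\rho$ whose restriction to $\Ss$ has that property. Then, by Arveson's extension theorem, I would extend the u.c.p. map $\rho \circ i : \Ss \rightarrow \Bh$, regarded as a map on $\Ss \subseteq C^{*}_{e}(\Ss)$, to a u.c.p. map $\phi : C^{*}_{e}(\Ss) \rightarrow \Bh$.

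Next I would compare the two maps $\phi \circ \pi$ and $\rho$ from $\Aa$ to $\Bh$. The composite $\phi \circ \pi$ is u.c.p., being the composition of the $*$-homomorphism $\pi$ with the u.c.p. map $\phi$, and on the embedded copy $i(\Ss)$ of $\Ss$ it satisfies $\phi(\pi(i(s))) = \phi(s) = \rho(i(s))$. Hence both $\phi \circ \pi$ and $\rho$ are u.c.p. extensions of the single map $\rho\arrowvert_{i(\Ss)}$. Since that restriction has the unique extension property, its only u.c.p. extension is the $*$-representation $\rho$, so $\phi \circ \pi = \rho$ on all of $\Aa$. As $\rho$ is faithful, $\phi \circ \pi$ is injective, and therefore so is $\pi$; combined with surjectivity this makes $\pi$ a $*$-isomorphism $\Aa \cong C^{*}_{e}(\Ss)$.

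The main obstacle, and really the crux of the argument, is the correct bookkeeping around the unique extension property: one must arrange a u.c.p. map on $\Aa$, namely $\phi \circ \pi$, whose restriction to the embedded copy of $\Ss$ agrees with a genuinely faithful representation $\rho$, so that uniqueness of the extension pins $\phi \circ \pi$ down to $\rho$ and forces $\pi$ to inherit injectivity. Everything else, that is, the surjectivity of $\pi$, the existence of the Arveson extension, and the reduction from full hyperrigidity to a single well-chosen faithful representation, is routine once this comparison is set up.
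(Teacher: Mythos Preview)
Your proposal is correct and follows essentially the same approach as the paper's proof: both construct the canonical surjection $\pi:\Aa\to C^{*}_{e}(\Ss)$, pick a faithful representation $\rho$ of $\Aa$, Arveson-extend $\rho\circ i$ to a u.c.p.\ map $\phi$ on $C^{*}_{e}(\Ss)$, and then use the unique extension property to force $\phi\circ\pi=\rho$, yielding injectivity of $\pi$. Your write-up is slightly more explicit about why surjectivity of $\pi$ holds and how the full hyperrigidity hypothesis reduces to the weaker single-$\rho$ hypothesis, but the argument is the same.
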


\begin{remark}
It is known that the injective envelope respects module actions, in the sense that if  $ \Ss$ is an operator $\Aa$-system, there always exist a unital complete order embedding $ \phi :\Ss \rightarrow I(\Ss)$ and a unital *-homomorphism $ \pi : \Aa \rightarrow I(\Ss) : a \mapsto a \cdot e$ into its injective envelope with $ \phi(a \cdot s ) = \pi(a) \phi(s)$ for all $ s \in \Ss$ and $ a \in \Aa$ \cite[Theorem 15.12]{paulsen_2003}. In fact, Theorem \ref{asystemsrepresentation} is an application of this result. Consequently, the $\Cs$-envelope of $\Ss$ respects module actions as well. Indeed, let $ C^{*}_{e}(\Ss)= C^{*}(\phi(\Ss)) \subseteq I(\Ss)$ and note that for all $ a \in \Aa$,
\[  \pi(a) = \phi(a \cdot e) \in \phi(\Ss) \subseteq C^{*}_{e}(\Ss).\]
Moreover, since $ \phi(\Ss)$ is a $ \pi(\Aa)$-bimodule in $ I(\Ss)$, then by continuity of the multiplication, so is  $C^{*}_{e}(\Ss) $. Finally, when we assume $ \Ss$  to be a faithful operator $\Aa$-system then by the above arguments there exist a unital c.o.e. $\phi: \Ss \rightarrow C^{*}_{e}(\Ss)$ and a unital injective *-homomorphism $\pi : \Aa \rightarrow C^{*}_{e}(\Ss)$ such that $ \phi(a \cdot s)= \pi(a) \phi(s)$ for all $ a \in \Aa$ and $ s \in \Ss$.
\end{remark}

Now we are ready to prove the main result of this section.

 \begin{theorem} \label{C*envelopesofAsystems}
Let $\Ss_{1} $ and $\Ss_{2}$ be two faithful operator $\Aa$-systems that are hyperrigid in their respective $\Cs$-envelopes, then   $$C^{*}_{e}(\Ss_{1}  \oplus_{\Aa} \Ss_{2}) \cong C^{*}_{e}(\Ss_{1}) *_{\Aa} C^{*}_{e}(\Ss_{2}).  $$
 \end{theorem}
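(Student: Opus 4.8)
The plan is to show that the amalgamated free product $C^{*}_{e}(\Ss_{1}) *_{\Aa} C^{*}_{e}(\Ss_{2})$ is a $\Cs$-cover of the coproduct $\Ss_{1} \oplus_{\Aa} \Ss_{2}$ in which the coproduct sits hyperrigidly; the result then follows immediately from Proposition \ref{allcstarcovers}, which identifies any $\Cs$-cover containing a hyperrigid copy of an operator system with its $\Cs$-envelope. First I would set $\Cc_{i} := C^{*}_{e}(\Ss_{i})$ and recall from the preceding remark that, since each $\Ss_{i}$ is a faithful operator $\Aa$-system, there is a unital injective $*$-homomorphism $\Aa \hookrightarrow \Cc_{i}$ respecting the module action, so that $\Aa$ is a common unital $\Cs$-subalgebra of $\Cc_{1}$ and $\Cc_{2}$ and the amalgamated free product $\Cc_{1} *_{\Aa} \Cc_{2}$ makes sense. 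By Proposition \ref{intheircsalgebras}, applied to the inclusions $\Ss_{i} \subseteq \Cc_{i}$, the coproduct $\Ss_{1} \oplus_{\Aa} \Ss_{2}$ is completely order isomorphic to $\Ss_{1} + \Ss_{2} \subseteq \Cc_{1} *_{\Aa} \Cc_{2}$. Since $\Ss_{i}$ generates $\Cc_{i}$ and the $\Cc_{i}$ together generate the free product, the operator system $\Ss_{1} + \Ss_{2}$ generates $\Cc_{1} *_{\Aa} \Cc_{2}$ as a $\Cs$-algebra, so $(\Cc_{1} *_{\Aa} \Cc_{2},\, i)$ is genuinely a $\Cs$-cover of the coproduct.

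The heart of the argument, and what I expect to be the main obstacle, is establishing hyperrigidity of $\Ss_{1} + \Ss_{2}$ inside $\Cc_{1} *_{\Aa} \Cc_{2}$. By Proposition \ref{allcstarcovers} it suffices to exhibit a single faithful representation $\rho$ of $\Cc_{1} *_{\Aa} \Cc_{2}$ whose restriction to $\Ss_{1} + \Ss_{2}$ has the unique extension property; in fact I would aim for the stronger statement that \emph{every} representation restricts with the unique extension property. So let $\pi : \Cc_{1} *_{\Aa} \Cc_{2} \rightarrow \Bh$ be a unital $*$-representation, and let $\psi : \Cc_{1} *_{\Aa} \Cc_{2} \rightarrow \Bh$ be any u.c.p.\ map agreeing with $\pi$ on $\Ss_{1} + \Ss_{2}$. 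The key observation is that $\psi$ restricts to a u.c.p.\ map on each $\Cc_{i}$ that agrees with the representation $\pi|_{\Cc_{i}}$ on the subsystem $\Ss_{i}$; since $\Ss_{i}$ is hyperrigid in $\Cc_{i} = C^{*}_{e}(\Ss_{i})$, the restriction $\pi|_{\Ss_{i}}$ has the unique extension property, which forces $\psi|_{\Cc_{i}} = \pi|_{\Cc_{i}}$ as $*$-representations for each $i=1,2$. Thus $\psi$ and $\pi$ are two u.c.p.\ maps that agree as $*$-representations on both $\Cc_{1}$ and $\Cc_{2}$.

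It remains to upgrade this agreement on each factor to agreement on the whole free product. The technical point is that $\psi$, being u.c.p.\ and restricting to genuine $*$-homomorphisms on each generating $\Cs$-subalgebra $\Cc_{i}$, is multiplicative on each $\Cc_{i}$; I would invoke the standard Choi-type rigidity argument (the multiplicative domain of a u.c.p.\ map is a $\Cs$-subalgebra on which the map is a homomorphism) to conclude that $\Cc_{1}$ and $\Cc_{2}$ both lie in the multiplicative domain of $\psi$. Since $\Cc_{1}$ and $\Cc_{2}$ generate $\Cc_{1} *_{\Aa} \Cc_{2}$, the multiplicative domain is all of $\Cc_{1} *_{\Aa} \Cc_{2}$, so $\psi$ is itself a $*$-representation, and being determined by its values on the generating subalgebras it equals $\pi$ everywhere. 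Hence $\pi|_{\Ss_{1} + \Ss_{2}}$ has the unique extension property, $\Ss_{1} + \Ss_{2}$ is hyperrigid in $\Cc_{1} *_{\Aa} \Cc_{2}$, and Proposition \ref{allcstarcovers} yields
\[
 C^{*}_{e}(\Ss_{1} \oplus_{\Aa} \Ss_{2}) \cong \Cc_{1} *_{\Aa} \Cc_{2} = C^{*}_{e}(\Ss_{1}) *_{\Aa} C^{*}_{e}(\Ss_{2}),
\]
as desired. The one place demanding care is checking that the amalgamation over $\Aa$ is respected throughout — that $\psi$ acts as the correct common representation of $\Aa$ — but this follows because both $\pi|_{\Cc_{i}}$ and $\psi|_{\Cc_{i}}$ already agree on $\Aa \subseteq \Ss_{i}$, so the two factor representations are compatible over $\Aa$ and the universal property of $*_{\Aa}$ applies without conflict.
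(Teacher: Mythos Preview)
Your proof is correct and takes a genuinely different route from the paper's.

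The paper argues via Arveson's dilation-theoretic characterisation: it fixes a faithful representation of $C^{*}_{e}(\Ss_{1}) *_{\Aa} C^{*}_{e}(\Ss_{2})$ on some $\Hh$, observes that hyperrigidity of each $\Ss_{i}$ makes the inclusions $\phi_{i}:\Ss_{i}\hookrightarrow\Bh$ \emph{maximal} u.c.p.\ maps (in the sense that they admit no nontrivial u.c.p.\ dilations), and then checks by hand that the induced inclusion $\Phi$ of $\Ss_{1}+\Ss_{2}$ inherits maximality from the $\phi_{i}$. The conclusion is then drawn from the Dritschel--McCullough theorem that a maximal completely isometric u.c.p.\ map generates the $\Cs$-envelope.

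Your argument instead verifies hyperrigidity of $\Ss_{1}+\Ss_{2}$ in the free product directly, via the multiplicative domain: given any representation $\pi$ and any competing u.c.p.\ extension $\psi$, hyperrigidity of each $\Ss_{i}$ forces $\psi|_{\Cc_{i}}=\pi|_{\Cc_{i}}$, whence each $\Cc_{i}$ lies in the multiplicative domain of $\psi$, so $\psi$ is a $*$-homomorphism agreeing with $\pi$ on generators. You then finish with Proposition~\ref{allcstarcovers} rather than the external Dritschel--McCullough result. This is a bit more self-contained (everything you invoke, including Proposition~\ref{allcstarcovers}, is already in the paper) and it yields the mildly stronger intermediate statement that $\Ss_{1}\oplus_{\Aa}\Ss_{2}$ is actually hyperrigid in $C^{*}_{e}(\Ss_{1}) *_{\Aa} C^{*}_{e}(\Ss_{2})$, not merely that the latter is the envelope. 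The paper's approach, on the other hand, stays entirely within the dilation picture and makes the link to Arveson's maximality framework explicit.
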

\begin{proof}

Since $ \Ss_{i}$, $ i=1,2$ are faithful operator $ \Aa$-systems,  there exist unital *-embeddings $ \e_{i} : \Aa \rightarrow C_{e}^{*}(\Ss_{i})$. By definition of the amalgamated free product, there exist isometric *-homomorphisms $$\pi_{i}: C^{*}_{e}(\Ss_{i}) \rightarrow C^{*}_{e}(\Ss_{1}) *_{\Aa} C^{*}_{e}(\Ss_{2}), \quad i=1,2$$ with $ \pi_{1} \circ \e_{1} = \pi_{2} \circ \e_{2}$
so that $C^{*}_{e}(\Ss_{1}) *_{\Aa} C^{*}_{e}(\Ss_{2})$ is generated by $ \pi_{1}(C^{*}_{e}(\Ss_{1})) \cup \pi_{2}(C^{*}_{e}(\Ss_{2})) $ and the following universal property holds: any two unital *-homomorphisms $\rho_{i} : C^{*}_{e}(\Ss_{i}) \rightarrow \cl B(H), \ i =1, 2$ with $ \rho_{1} \circ \e_{1} = \rho_{2} \circ \e_{2}$, give rise to a *-representation $ \rho : C^{*}_{e}(\Ss_{1}) *_{\Aa} C^{*}_{e}(\Ss_{2}) \rightarrow \cl B(H)$ such that $ \rho \circ \pi_{i} = \rho_{i}$, $ i=1, 2$.

 Omit the maps $\pi_{i}$ and simply consider $ C^{*}_{e}(\Ss_{i})\subseteq  C^{*}_{e}(\Ss_{1}) *_{\Aa} C^{*}_{e}(\Ss_{2}) $, $ i =1,2$. Denote by  $\phi_i : \Ss_i \rightarrow C^*_e(\Ss_i), \ i=1,2$ the canonical completely isometric embeddings.
By Proposition \ref{intheircsalgebras} 
there exists a unital complete order isomorphism $\Phi$ from  $  \Ss_{1}\oplus_{\cl A}\Ss_{2}$ onto $  \phi_{1}(\Ss_{1}) + \phi_{2}(\Ss_{2}) \subseteq   C^{*}_{e}(\Ss_{1}) *_{\Aa} C^{*}_{e}(\Ss_{2})$ such that  $ \Phi|_{\Ss_i} = \phi_i $, $i =1,2$.
 It is clear that $( C^{*}_{e}(\Ss_{1}) *_{\Aa} C^{*}_{e}(\Ss_{2}),\Phi) $  is a $C^*$-cover for $ \Ss_{1}\oplus_{\Aa}\Ss_{2}$  and thus by Proposition \ref{allcstarcovers}, it suffices to show that the coproduct is hyperrigid in it.

 Note that since the $\Ss_i$ are hyperrigid, for any representations $ \sigma_i : C^{*}_{e}(\Ss_{i}) \to \cl B(H)$, $ \sigma_{i} \circ \phi_{i}$, $ i =1,2$, has the unique extension property; equivalently, $ \sigma_{i} \circ \phi_{i}$, $ i =1,2$ is a  maximal unital completely positive map (see \cite[Proposition 2.2]{Arveson2003NOTESOT}.

 To this end, let $\sigma : C^{*}_{e}(\Ss_{1}) *_{\Aa} C^{*}_{e}(\Ss_{2}) \to \cl B(H) $ be a representation and let $ \Psi:= \sigma|_{\Ss_{1}\oplus_{\Aa}\Ss_{2}}= \sigma \circ \Phi$. By the discussion above,  $ \sigma_{i}:= \sigma|_{C^{*}_{e}(\Ss_{i})} $, $ i=1,2$ are representations with the unique extension property. We will show that $ \Psi$ is maximal.
Indeed,  suppose that $\Phi$ has a unital completely positive dilation,   that is, a ucp map $\psi : \Ss_{1}\oplus_{\Aa} \Ss_{2} \rightarrow \B(K) $ for some Hilbert space $K$ containing $H$ such that $\Psi(a) = P_{H}\psi(a)\arrowvert_{H}$, for all $a \in \Ss_{1}\oplus_{\Aa} \Ss_{2}$, where $P_H\in\cl B(K)$ is the orthogonal projection onto $H$ (we write  $\psi \geq \Phi $). Now we see that if $a_{i} \in \Ss_{i}$, $i=1, 2$,
\[ \sigma_{i}|_{\Ss_{i}}(a_{i}) = \Psi \arrowvert_{\Ss_{i}}(a_{i}) = P_{H}\psi \arrowvert_{\Ss_{i}}(a_{i})\arrowvert_{H} \]
and thus $ \psi \arrowvert_{\Ss_{i}} \geq \sigma_{i}|_{\Ss_{i}}$. But since $\sigma_{i}|_{\Ss_{i}}$ are maximal, this is equivalent (see \cite[Proposition 2.2]{Arveson2003NOTESOT}) to 
\[  \sigma_{i}|_{\Ss_{i}}(a_{i})x= \psi \arrowvert_{\Ss_{i}}(a_{i})x \] 
for all $a_{i} \in \Ss_{i}$ and $x \in H$. 

It now follows  that, \begin{align*}
     \Psi(a_{1} + a_{2})x & = \sigma_{1}|_{\Ss_{1}}(a_{1})x + \sigma_{2}|_{\Ss_{2}}(a_{2})x  \\
     & =  \psi \arrowvert_{\Ss_{1}}(a_{1})x + \psi \arrowvert_{\Ss_{2}}(a_{2})x\\
     & = \psi(a_{1} + a_{2})x
\end{align*}    
for all $a_{1} \in \Ss_{1}$, $ a_{2}\in \Ss_{2}$ and $ x \in H$. Hence $\Psi(a_{1} + a_{2})= \psi(a_{1} + a_{2})$ and so $\Psi$ and $\psi$ coincide on $\Ss_{1}+\Ss_{2}=\Ss_{1}\oplus_{\Aa} \Ss_{2}$, showing that  $\Phi $ is maximal, equivalently, it has the unique extension property. We have thus showed that for an arbitrary representation $ \sigma$, the restriction  $ \sigma|_{\Ss_{1}\oplus_{\Aa} \Ss_{2}} $ has the unique extension property, that is, $ \Ss_{1}\oplus_{\Aa} \Ss_{2}$ is hyperrigid in  $C^{*}_{e}(\Ss_{1}) *_{\Aa} C^{*}_{e}(\Ss_{2})$. By Theorem \ref{allcstarcovers}, $C^{*}_{e}(\Ss_{1}  \oplus_{\Aa} \Ss_{2}) \cong C^{*}_{e}(\Ss_{1}) *_{\Aa} C^{*}_{e}(\Ss_{2})$.

\end{proof}

\section{A SPECIAL OPERATOR A-SYSTEM AND DUAL OPERATOR A-SYSTEMS} \label{graphs}

We are now able to go back to the examples of graph operator systems and answer the question on whether the coproduct of two graph operator systems is again a graph operator system (since it is a $\mathcal{D}_{n}$-bimodule).

\begin{proposition} \label{toantiparad}
The coproduct of two graph operator systems is not necessarily a graph operator system. That is, there exist two graph operator systems whose coproduct is not  completely order isomorphic to any operator system $  \Ss_{G'} \subseteq M_{k}$ that is a bimodule over $ \Dd_{k}$ for some $ k \in \N$.   In fact, this coproduct cannot even be completely order isomorphic to  any operator system acting on a finite dimensional Hilbert space.
\end{proposition}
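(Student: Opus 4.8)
The plan is to exhibit two specific small graph operator systems whose $\Dd_{n}$-coproduct has a $\Cs$-envelope that is necessarily infinite-dimensional, and then use the general theorem on $\Cs$-envelopes of coproducts to force the coproduct itself away from being representable on a finite-dimensional space. The natural candidates are the two graph operator systems on $n=2$ vertices: take $G_{1}=G_{2}$ to be the complete graph on two vertices, so that $\Ss_{G_{1}}=\Ss_{G_{2}}=M_{2}$, each viewed as a $\Dd_{2}$-bimodule. These are finite-dimensional operator $\Dd_{2}$-systems, and by Proposition~\ref{corollaryvN} (since $\Dd_{2}$ is a von Neumann algebra) their coproduct is realised concretely as the completely order proximinal quotient
\begin{align*}
M_{2}\oplus_{\Dd_{2}}M_{2}\;\cong\;M_{2}\oplus M_{2}/\{D\oplus(-D):D\in\Dd_{2}\}.
\end{align*}

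\textbf{First}, I would identify the $\Cs$-envelope of this coproduct. Since $M_{2}$ contains enough unitaries to generate itself as a $\Cs$-algebra, it is hyperrigid in $C^{*}_{e}(M_{2})=M_{2}$ by Proposition~\ref{hyperrigidimpliesunitaries}. Therefore Theorem~\ref{C*envelopesofAsystems} applies and yields
\begin{align*}
C^{*}_{e}\big(M_{2}\oplus_{\Dd_{2}}M_{2}\big)\;\cong\;M_{2}*_{\Dd_{2}}M_{2}.
\end{align*}
\textbf{The key computation} is then to show that the amalgamated free product $M_{2}*_{\Dd_{2}}M_{2}$ is infinite-dimensional. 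Here I would use the standard structure theory: $M_{2}$ is generated over its diagonal $\Dd_{2}$ by a single selfadjoint unitary (a symmetry) $u_{i}$ in each copy, and the amalgamated free product $M_{2}*_{\Dd_{2}}M_{2}$ is generated over $\Dd_{2}$ by two symmetries $u_{1},u_{2}$ with no relation between them beyond their individual relations with $\Dd_{2}$. The product $u_{1}u_{2}$ then generates (together with the amalgam) a copy of a group $\Cs$-algebra of an infinite group — morally $C^{*}(\mathbb{Z})$ or $C^{*}(\mathbb{Z}_{2}*\mathbb{Z}_{2})$ sitting inside — and in particular the algebra is infinite-dimensional. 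One can verify this cleanly by producing, for each $k$, a pair of symmetries in some $M_{m}$ for which the words $u_{1},u_{1}u_{2},(u_{1}u_{2})^{2},\dots$ are linearly independent, which forces $\dim\big(M_{2}*_{\Dd_{2}}M_{2}\big)=\infty$.

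\textbf{Finally}, I would draw the dimension-theoretic conclusion. If the coproduct $M_{2}\oplus_{\Dd_{2}}M_{2}$ were completely order isomorphic to a graph operator system $\Ss_{G'}\subseteq M_{k}$ — or more generally were representable faithfully (as a complete order embedding) on a finite-dimensional Hilbert space $\Hh$ with $\dim\Hh<\infty$ — then its $\Cs$-envelope, being a quotient of the finite-dimensional $\Cs$-algebra generated by its image in $\B(\Hh)$, would itself be finite-dimensional. This contradicts the infinite-dimensionality of $M_{2}*_{\Dd_{2}}M_{2}$ established above. Hence no such finite-dimensional faithful representation exists, and in particular the coproduct is not completely order isomorphic to any graph operator system.

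\textbf{The main obstacle} I anticipate is the second step: rigorously establishing that $M_{2}*_{\Dd_{2}}M_{2}$ is infinite-dimensional. One must be careful that the amalgamation over $\Dd_{2}$ does not collapse the free product to something finite; the cleanest route is to realise the two copies of $M_{2}$ faithfully inside a common larger algebra via a pair of symmetries in free (or generic) relative position and exhibit infinitely many linearly independent reduced words, rather than relying on an abstract statement. Once infinite-dimensionality is in hand, the passage from the $\Cs$-envelope back to the coproduct is routine, since a complete order embedding into $\B(\Hh)$ with $\dim\Hh<\infty$ would generate a finite-dimensional $\Cs$-algebra whose canonical quotient onto $C^{*}_{e}$ could not be infinite-dimensional.
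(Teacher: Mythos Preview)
Your proposal is correct and follows essentially the same route as the paper: both take $\Ss_{G_{1}}=\Ss_{G_{2}}=M_{2}$ as $\Dd_{2}$-systems, use that $M_{2}$ contains enough unitaries to invoke hyperrigidity (Proposition~\ref{hyperrigidimpliesunitaries}) and hence Theorem~\ref{C*envelopesofAsystems} to get $C^{*}_{e}(M_{2}\oplus_{\Dd_{2}}M_{2})\cong M_{2}*_{\Dd_{2}}M_{2}$, and then derive a contradiction from the infinite-dimensionality of this amalgamated free product. The paper simply asserts infinite-dimensionality of $M_{2}*_{\Dd_{2}}M_{2}$ without justification and, for the graph operator system case, cites the Ortiz--Paulsen result that $C^{*}_{e}(\Ss_{G'})=C^{*}(\Ss_{G'})\subseteq M_{k}$; your sketch of the symmetry/reduced-word argument and your direct $\Cs$-envelope-as-quotient reasoning for the general finite-dimensional case are more explicit but not different in substance.
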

\begin{proof}
Let $G$ be the complete graph on 2 vertices. Let $\Ss_{G}$ be its graph operator system and consider two copies of $ \Ss_{G}$.  Let $ \Ss_{G} \oplus_{\Dd_{2}} \Ss_{G}$  be their coproduct over $ \Dd_{2}$. Note that the graph operator system $\Ss_{G}$ is in fact all 2 by 2 scalar matrices, i.e. $ \Ss_{G}= M_{2}$ and $\Ss_{G} \oplus_{\Dd_{2}} \Ss_{G} = M_{2} \oplus_{\Dd_{2}} M_{2} $ .

Assume that $M_{2} \oplus_{\Dd_{2}} M_{2}  $ is completely order isomorphic with a graph operator system $ \Ss_{G'}$ with $ \Dd_{k} \subseteq \Ss_{G'} \subseteq M_{k}$ that is a bimodule over $ \Dd_{k}$ for some $ k \in \N$. Hence 
$$ C_{e}^{*}(M_{2} \oplus_{\Dd_{2}} M_{2} ) \cong C^{*}_{e}(\Ss_{G'}).$$

 Since $ M_{2}$ clearly contains enough unitaries to generate its $\Cs$-envelope (i.e. itself) it is hyperrigid by Proposition \ref{hyperrigidimpliesunitaries}. Hence, by Theorem \ref{C*envelopesofAsystems} we have that 
$C_{e}^{*}(M_{2} \oplus_{\Dd_{2}} M_{2} ) \cong M_{2} *_{\Dd_{2}} M_{2} $.

Finally, by \cite[Theorem 3.2]{ORTIZ2015128}, we have $ C^{*}_{e}(\Ss_{G'})=C^{*}(\Ss_{G'}) \subseteq M_{k}$ and thus we have following *-isomorphism
\begin{align*}
    M_{2} *_{\Dd_{2}} M_{2} \cong C^{*}(\Ss_{G'}).
\end{align*}
However, this  cannot hold since the amalgamated free product on the left hand side is infinite dimensional while the right hand side is finite dimensional. In particular $M_{2}  \oplus_{\Dd_{2}} M_{2}$ cannot even be represented concretely as an operator system in a finite dimensional Hilbert space.
\end{proof}

So, $M_{2}  \oplus_{\Dd_{2}} M_{2}$ is not a graph operator system despite the fact that it is a bimodule over the 2 by 2 diagonal matrices. However, as we will see shortly, it is a dual operator $\Dd_{2}$-system.

In fact, the coproduct of any two dual operator $\Aa$-systems is a dual operator $\Aa$-system. In order to see this, we need to prove first that their coproduct is w*-closed, and also that the module action is separately w*-continuous.
For more about the subject, we refer to the work of Y-F. Lin and I. Todorov \cite{10.1093/imrn/rnz364}.

\begin{definition}\cite{10.1112/blms/bdq103}
Let $\Ss$ be an operator system. We say that $\Ss$ is a dual operator system, if it is a dual operator space. That is, if there exists an operator space $ \Ss_{*}$ such that $ \Ss \cong (\Ss_{*})^{*}$ completely isometrically isomorphically.
\end{definition}

\begin{definition}\cite[Definition 4.3]{10.1093/imrn/rnz364}
Let $\Aa$ be a von Neumann algebra. An operator system $\Ss$, will be called a dual operator $\Aa$-system if
\begin{enumerate}
    \item $\Ss$ is an operator $\Aa$-system
    \item $\Ss$ is a dual operator system, and
    \item The map from $\Aa \times \Ss$ into $\Ss$, that sends the pair $ (a,s)$ to $ a \cdot s$, is separately w*-continuous.
\end{enumerate}
\end{definition}

\begin{theorem}\cite[Theorem 4.7]{10.1093/imrn/rnz364} \label{yingfenivan}
Let $\Aa$ be a von Neumann algebra and $\Ss$ be a dual operator $\Aa$-system. There exists a Hilbert space $ \Hh$, a unital complete order embedding $ \gamma : \Ss \rightarrow \Bh$  which is a w*-homeomorphism with w*-closed range, and a unital normal *-homomorphism $ \pi : \Aa \rightarrow \Bh$, such that, 
\begin{align*}
    \gamma(a \cdot s) = \pi(a) \gamma(s), \; \; \text{ for all } \; \; a \in \Aa, s \in \Ss.
\end{align*}
\end{theorem}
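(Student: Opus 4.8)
The plan is to isolate what is genuinely difficult by first recording a simple reduction. Suppose we have produced a Hilbert space $\Hh$, a unital complete order embedding $\gamma:\Ss\rightarrow\Bh$ that is a w*-homeomorphism onto a w*-closed range, and \emph{some} unital *-homomorphism $\pi:\Aa\rightarrow\Bh$ with $\gamma(a\cdot s)=\pi(a)\gamma(s)$. Putting $s=e$ and using $\gamma(e)=I$ gives $\pi(a)=\gamma(a\cdot e)$. Since $a\mapsto a\cdot e$ is w*-continuous by the separate w*-continuity of the action, and $\gamma$ is w*-continuous by construction, the composite $\pi$ is w*-continuous, i.e.\ \emph{normal}. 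Thus normality of $\pi$ is automatic, and the entire content of the theorem is the construction of a w*-homeomorphic, complete order embedding that intertwines the module action with a *-homomorphism; I would spend all effort there.

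To build $\gamma$ I would start from the non-normal data of Theorem~\ref{asystemsrepresentation}, which supplies a unital complete order embedding $\phi_{0}:\Ss\rightarrow\Bk$ and a unital *-homomorphism $\pi_{0}:\Aa\rightarrow\Bk$ with $\phi_{0}(a\cdot s)=\pi_{0}(a)\phi_{0}(s)$, and then normalise it using that $\Ss=(\Ss_{*})^{*}$. Concretely, I would assemble a family of \emph{normal} u.c.p.\ $\Aa$-bimodule maps $\phi_{i}:\Ss\rightarrow\B(\Hh_{i})$, each carrying an intertwining unital *-homomorphism $\pi_{i}:\Aa\rightarrow\B(\Hh_{i})$, chosen rich enough to (i) separate the points of every $\Mn(\Ss)$ and detect each cone $\Mn(\Ss)^{+}$, and (ii) induce the w*-topology of $\Ss$. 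For (i) one uses that $\Mn(\Ss)=(\Mn(\Ss_{*}))^{*}$, so the cones $\Mn(\Ss)^{+}$ are w*-closed and are therefore cut out by the normal positive functionals (bipolar theorem against the predual). Setting $\Hh=\bigoplus_{i}\Hh_{i}$, $\gamma=\bigoplus_{i}\phi_{i}$ and $\pi=\bigoplus_{i}\pi_{i}$, the direct sum $\gamma$ is then a w*-continuous unital complete order embedding and $\pi$ a unital *-homomorphism with $\gamma(a\cdot s)=\pi(a)\gamma(s)$.

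It remains to upgrade ``$\gamma$ w*-continuous'' to ``$\gamma$ a w*-homeomorphism onto a w*-closed range''. Here I would run a Krein--Smulian argument: the closed unit ball of $\Ss$ is w*-compact, $\gamma$ restricted to it is a continuous injection into a Hausdorff w*-topology, hence a w*-homeomorphism onto its image, which is therefore w*-compact and in particular w*-closed; Krein--Smulian then promotes this to w*-closedness of the full range, and the same reasoning on balls shows $\gamma^{-1}$ is w*-continuous on the range. Combined with arranging that the family induces the original topology, this yields the required w*-homeomorphism.

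The main obstacle is the coupling in step (i)--(ii): detecting the matrix order naturally wants \emph{scalar} normal functionals, whereas respecting the module structure forces \emph{operator-valued} maps carrying a *-homomorphism of $\Aa$ (a type III $\Aa$ admits no finite-dimensional representation, so scalar- or matrix-valued maps cannot implement the action). The genuine technical heart is therefore to manufacture normal u.c.p.\ maps that are \emph{simultaneously} $\Aa$-bimodule maps and fine enough to detect every $\Mn(\Ss)^{+}$ and to recover the w*-topology; this is exactly where the separate w*-continuity of the action and the predual of $\Ss$ must be used together, by weaving each normal functional through the bimodule representation of Theorem~\ref{asystemsrepresentation}. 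Once $\gamma$ is secured, normality of $\pi$ is free by the reduction above.
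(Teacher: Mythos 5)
This statement is quoted in the paper as an external result (Lin--Todorov, \cite[Theorem 4.7]{10.1093/imrn/rnz364}); the paper itself gives no proof, so there is no in-paper argument to compare against and your proposal has to stand on its own.

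On its own it has a genuine gap, and it is exactly the one you flag yourself. The reduction showing that normality of $\pi$ is automatic (via $\pi(a)=\gamma(a\cdot e)$ and separate w*-continuity of the action) is correct, and the Kre\u{\i}n--\v{S}mulian endgame is the standard and valid way to upgrade a w*-continuous unital complete order embedding (hence complete isometry) to a w*-homeomorphism onto a w*-closed range. But the entire content of the theorem sits in the step you defer: actually constructing a family of \emph{normal} u.c.p.\ maps $\phi_{i}:\Ss\rightarrow\B(\Hh_{i})$ that are simultaneously $\Aa$-bimodule maps, separate the points of each $\Mn(\Ss)$, detect each cone $\Mn(\Ss)^{+}$, and induce the w*-topology. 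You describe what such a family must do but never produce it, so the proposal is a plan rather than a proof. Two specific obstructions are left unaddressed. First, your claim that the cones $\Mn(\Ss)^{+}$ are w*-closed ``because $\Mn(\Ss)=(\Mn(\Ss_{*}))^{*}$'' does not follow: the definition of a dual operator system only asks that $\Ss$ be a dual operator space, and a norm-closed cone in a dual Banach space need not be w*-closed; establishing (or circumventing) w*-closedness of the matricial cones is a substantive part of the Blecher--Magajna theory of dual operator systems, not a formal consequence of duality. Second, even granting w*-closed cones, the bipolar theorem only supplies separating \emph{scalar} normal functionals; turning these into normal u.c.p.\ maps carrying an intertwining *-homomorphism of $\Aa$ requires a module-compatible GNS/Stinespring construction whose normality must itself be checked, and ``weaving each normal functional through the bimodule representation of Theorem \ref{asystemsrepresentation}'' cannot work as stated, since that representation is not normal to begin with. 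Until that construction is carried out, the theorem is not proved.
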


\begin{theorem} \label{coproductsofduals}
Let $\Aa \subseteq \cl B(H)$ be a von Neumann algebra and let $\Ss, \Tt \subset \cl B(H)$ be two w*-closed operator systems that are bimodules over $\Aa$ and such that $ \Aa $ in contained in both. Then, their coproduct $ \Ss \oplus_{\Aa} \Tt$ is also a dual operator $ \Aa$-system.
\end{theorem} 
\begin{proof}
By Theorem \ref{corollaryvN} we can  write the coproduct as $ \Ss \oplus_{\Aa} \Tt = \Ss \oplus \Tt/ \mathcal{J}$, where $ \mathcal{J} =\{a \oplus -a : a\in \Aa \} \subseteq \B(H \oplus H)$.   Now $ \Ss \oplus \Tt$ is a w*-closed subspace of $ \B(H \oplus H)$ and $ \mathcal{J}$ is a w*-closed subspace of $ \Ss \oplus \Tt$.

If $ Y$ is a dual operator space and $X \subseteq  Y$ a weak* continuous subspace, then $ Y/X = (X_{\perp})^{\rm d}$ completely isometrically. Thus, $\Ss \oplus \Tt/ \mathcal{J} \cong (\mathcal{J_{\perp}})^{*} $ as operator spaces. Here  $$\mathcal{J_{\perp}} = \{u \oplus v \in \Ss_{*} \oplus \Tt_{*}: \langle u\oplus v ,a \oplus -a \rangle 
=0 \text{ for all } a \in \Aa  \}.$$  So the coproduct is a dual operator system. It is by definition an operator $\Aa$-system, so in order for it to be a dual operator $\Aa$-system we need to check that the module action is separately w*-continuous.

We recall  that the completely isometric isomorphism $\Ss \oplus \Tt/ \mathcal{J} \cong (\mathcal{J_{\perp}})^{\rm d} $ is the map:
 \begin{align*}
     \Phi : \Ss \oplus \Tt/ \mathcal{J} \to  (\mathcal{J_{\perp}})^{\rm d} : \;
     x= s\oplus t + \mathcal{J}& \mapsto \Phi_x
     \end{align*}
     where $\Phi_x(u\oplus v)= \langle u\oplus v, s\oplus t\rangle, \; \; u \oplus v \in \mathcal{J_{\perp}}$.
 
 Suppose that $ (a_{i})_{i\in I}$ is a net in $\Aa$ such that $ a_{i} \xrightarrow{w^{*}} a \in \Aa $. Let also $ x = s\oplus t + \mathcal{J} \in \Ss \oplus \Tt/ \mathcal{J}$. We will show that $ a_{i} \cdot x \xrightarrow{w^{*}} a \cdot x$. Note that 
 \begin{align*}
 a_{i} s \oplus  a_{i}t & \xrightarrow{w^{*}} as \oplus at \text{ in } \Ss \oplus \Tt \Rightarrow \\
 \langle u  \oplus v ,a_{i} s \oplus a_{i}t  \rangle  & \rightarrow   \langle u  \oplus v ,a_{} s \oplus a_{}t  \rangle, \;  \forall  u \oplus v \in \mathcal{J_{\perp}} \Rightarrow \\ 
     \Phi_{a_{i} \cdot x }(u \oplus v) & \rightarrow \Phi_{a \cdot x}(u \oplus v), \;  \forall u \oplus v \in \mathcal{J_{\perp}}.
 \end{align*}
 So, it suffices to show that $a_{i} s \oplus  a_{i}t  \xrightarrow{w^{*}} as \oplus at$.
 
 This holds since $ a_{i} \xrightarrow{w^{*}} a $ in  $ \Aa $ implies that $ a_{i} s \xrightarrow{w^{*}} a s $ 
 and $ a_{i} t \xrightarrow{w^{*}} a t $ by the separate w*-continuity of multiplication in $ \cl B(H)$. 
 
 Now assume that $ x_{i} \xrightarrow{w^{*}} x$ in  $ \Ss \oplus \Tt / \mathcal{J}$ and let $ a\in \Aa$. We note that whenever $ u\oplus v \in \cl J_{\perp}$, where $ u \in \Ss_{*}$ and $ v \in \Tt_{*}$, then $ u \cdot a \in \Ss_{*}$ and $  v \cdot a\in \Tt_*$.  Here we denote by $S^{1}(H)$ the trace class operators on $H$. Indeed, from the identification $ \Ss_{*}= S^{1}(H)/\Ss_{\perp}$ (resp. $ \Tt_{*}= S^{1}(H)/\Tt_{\perp}$) write $ u = T_u + \Ss_{\perp}$ and note that the action $(T_{u} + \Ss_{\perp})\cdot a = T_{u}a + \Ss_{\perp}$ is well defined. This follows from the fact that $ \Aa \cdot \Ss \subseteq \Ss$. Similarly for $ \Tt_{*}$. Noting that if $ u\oplus v \in \cl J_{\perp}$ then $ (u\cdot a)\oplus (v \cdot a ) \in \cl J_{\perp}$.  Hence, for $ x_i = s_i \oplus t_i + \cl J$ and $ x = s\oplus t+\cl J$,
 \begin{align*}
     x_i & \xrightarrow{w*} x \text{ in } \Ss \oplus \Tt \Rightarrow  \\
     \langle u  \oplus v , s_i \oplus t_i  \rangle  & \rightarrow   \langle u  \oplus v , s \oplus t  \rangle, \;  \forall  u \oplus v \in \mathcal{J_{\perp}}. 
 \end{align*}
 Hence, for an arbitrary $ u\oplus v \in \cl J_{\perp}$,
 \begin{align*}
     \langle(u\cdot a)\oplus (v \cdot a ), s_i \oplus t_i  \rangle  & \rightarrow   \langle (u\cdot a)\oplus (v \cdot a ), s \oplus t  \rangle \Rightarrow \\
      \langle u  \oplus v, as_i \oplus at_i  \rangle  & \rightarrow   \langle u  \oplus v, as \oplus at  \rangle 
 \end{align*}
 showing that $ a\cdot x_i \to a\cdot x$.  Thus  $\Ss \oplus \Tt / \mathcal{J} = \Ss \oplus_{\Aa} \Tt$ is a dual operator $\Aa$-system. 
\end{proof}

\section{INDUCTIVE LIMITS} \label{inductivelimits}

We now move on to the topic of \textit{inductive limits}, to see how well the coproducts behave. For an extensive study of inductive limits for operator systems see \cite{Mawhinney2017InductiveLI}. First, we state the definition of an inductive system and inductive limit in the operator system category.
\begin{definition}
An inductive system in the category of operator systems, is a pair $(\{\Ss_{k}\}_{k\in \N}, \{\phi_{k}\}_{k \in \N})$ where  $ \Ss_{k}$ is an operator system for each $k \in \N$ and $ \phi_{k}:\Ss_{k} \rightarrow \Ss_{k+1}$ is a unital completely positive map. 

An inductive limit, for the inductive system $(\{\Ss_{k}\}_{k\in \N}, \{\phi_{k}\}_{k \in \N})$, is a pair $(\Ss, \{\phi_{k,\infty}\}_{k \in \N})$ where $ \Ss$ is an operator system and $\phi_{k, \infty} : \Ss_{k} \rightarrow \Ss $ is a u.c.p. map for each $ k \in \N$ such that:
\begin{enumerate}
    \item $\phi_{k+1, \infty} \circ \phi_{k} = \phi_{k, \infty}$, 
    \item if $(\Tt, \{ \psi_{k}\}_{k\in \N})$ is another pair such that $ \Tt$ is an operator system, $ \psi_{k}: \Ss_{k} \rightarrow \Tt$ is a u.c.p. map and $ \psi_{k+1} \circ \phi_{k} = \psi_{k}$, $ k \in \N$, then there exists a unique u.c.p. map $ \mu : \Ss \rightarrow \Tt$ such that $ \mu \circ \phi_{k, \infty} = \psi_{k}$, for each $k \in \N$.
\end{enumerate}
\end{definition}

We will show that any two inductive systems in the category of operator systems, give rise to an inductive system of  coproducts of the operator systems at each term, having as an inductive limit, the coproduct of the inductive limits of the two inductive systems.

\begin{proposition}
Let $(\Ss, \{\phi_{k,\infty}\}_{k \in \N})$ and $ (\Tt, \{\psi_{k,\infty}\}_{k \in \N})$ be two inductive limits in the operator system category, of the inductive systems:
\begin{align*}
    \Ss_{1} \xrightarrow{\phi_{1}} \Ss_{2} \xrightarrow{\phi_{2}} & \Ss_{3}  \xrightarrow{\phi_{3}} \Ss_{4}\xrightarrow{\phi_{4}} \cdots\\
    &\text{and}\\
    \Tt_{1} \xrightarrow{\psi_{1}} \Tt_{2} \xrightarrow{\psi_{2}}& \Tt_{3}  \xrightarrow{\psi_{3}} \Tt_{4}\xrightarrow{\psi_{4}} \cdots
\end{align*}
respectively. Then, the pair $(\Ss\oplus_{1}\Tt,\{\phi_{k,\infty}\oplus_{1}\psi_{k,\infty}\}_{k \in \N})$ is an inductive limit for the inductive system
\begin{align*}
    \Ss_{1}\oplus_{1}\Tt_{1} \xrightarrow{\phi_{1}\oplus_{1}\psi_{1}} \Ss_{2}\oplus_{1}\Tt_{2} \xrightarrow{\phi_{2}\oplus_{1}\psi_{2}} & \Ss_{3}\oplus_{1}\Tt_{3}  \xrightarrow{\phi_{3}\oplus_{1}\psi_{3}} \Ss_{4}\oplus_{1}\Tt_{4}\xrightarrow{\phi_{4}\oplus_{1}\psi_{4}} \cdots
\end{align*}
 where for each $k \in \N$, the map $ \phi_{k}\oplus_{1}\psi_{k}: \Ss_{k}\oplus_{1}\Tt_{k} \rightarrow \Ss_{k+1}\oplus_{1}\Tt_{k+1}$ is the unique u.c.p. map associated with the maps $ \phi_{k}$ and $ \psi_{k}$ after we embed both $ \Ss_{k+1}$ and  $ \Tt_{k+1}$ in $  \Ss_{k+1}\oplus_{1}\Tt_{k+1}$. 
 \begin{proof}
   Let us begin by considering each one of the operator systems $ \Ss_{k}$ and $ \Tt_{k}$ as  subsystems of some $\Cs$-algebras, say their universal $\Cs$-algebras $ C_{u}^{*}(\Ss_{k})$ and $ C_{u}^{*}(\Tt_{k})$ respectively, for every $ k \in \N$. Then, for every $k \in \N$, we can identify (completely order isomorphically) their coproducts $\Ss_{k}\oplus_{1}\Tt_{k}$ with the subsystems $ \Ss_{k} + \Tt_{k} \subseteq C_{u}^{*}(\Ss_{k}) *_{1} C_{u}^{*}(\Tt_{k})$ as in Proposition \ref{suminuniversals}, with the associated maps being the inclusions $ \Ss_{k}, \;  \Tt_{k} \hookrightarrow \Ss_{k} + \Tt_{k}$. We thus assume that each coproduct $ \Ss_{k} \oplus_{1} \Tt_{k}$ is spanned by sums of the form $ s_{k} + t_{k}$, for $ s_{k} \in \Ss_{k}$, $ t_{k} \in \Tt_{k}$. So, we have the following inductive systems: 
    \begin{center}
          \begin{tikzcd}[row sep=large, column sep=large]
             \Ss_{1} \arrow[d, hook]   \arrow[r, "\phi_{1}"]  & \Ss_{2} \arrow[d, hook]   \arrow[r, "\phi_{2}"]  & \Ss_{3} \arrow[d, hook]  \arrow[r, "\phi_{3}"]  & \cdots\\
             \Ss_{1}\oplus_{1}\Tt_{1} \arrow[r, "\phi_{1}\oplus_{1}\psi_{1}"]  & \Ss_{2}\oplus_{1}\Tt_{2} \arrow[r, "\phi_{2}\oplus_{1}\psi_{2}"] &\Ss_{3}\oplus_{1}\Tt_{3} \arrow[r, "\phi_{3}\oplus_{1}\psi_{3}"] & \cdots\\
              \Tt_{1} \arrow[u, hook]   \arrow[r, "\psi_{1}"]  & \Tt_{2} \arrow[u, hook]   \arrow[r, "\psi_{2}"]  & \Tt_{3} \arrow[u, hook]  \arrow[r, "\psi_{3}"]  & \cdots
              \end{tikzcd}
                \end{center}
 where for each $k \in \N$, $\phi_{k}\oplus_{1}\psi_{k}: \Ss_{k}\oplus_{1} \Tt_{k} \rightarrow \Ss_{k+1}\oplus_{1} \Tt_{k+1}$ is the unique u.c.p. map, that satisfies
\begin{align} \label{equatrestrict}
    (\phi_{k}\oplus_{1}\psi_{k})\arrowvert_{\Ss_{k}} = \phi_{k} \; \text{ and } \; 
    (\phi_{k}\oplus_{1}\psi_{k})\arrowvert_{\Tt_{k}} = \psi_{k}.
\end{align}
 Moreover, if we do the same for the operator systems $\Ss$ and $\Tt$, we also obtain u.c.p. maps
$$ \phi_{k,\infty}\oplus_{1} \psi_{k,\infty} : \Ss_{k}\oplus_{1}\Tt_{k} \rightarrow \Ss \oplus_{1}\Tt, $$ such that 
\begin{align} \label{deytero}
    (\phi_{k,\infty}\oplus_{1}\psi_{k,\infty})\arrowvert_{\Ss_{k}} = \phi_{k,\infty} \; \text{ and } \; 
    (\phi_{k,\infty}\oplus_{1}\psi_{k,\infty})\arrowvert_{\Tt_{k}} = \psi_{k,\infty},
\end{align}
for each $ k \in \N$.
In order to prove that $(\Ss\oplus_{1}\Tt,\{\phi_{k,\infty}\oplus_{1}\psi_{k,\infty}\}_{k \in \N})$ is indeed an inductive limit in the operator system category, we have to verify the following conditions:

1)  $ (\phi_{k+1,\infty}\oplus_{1}\psi_{k+1,\infty} )\circ (\phi_{k}\oplus_{1}\psi_{k}) = \phi_{k,\infty}\oplus_{1}\psi_{k,\infty} $,

2) [\textit{universal property}] If $(\Rr , \{\rho_{k}\}_{k \in \N})$ is another pair, such that $ \Rr$ is an operator system, $ \rho_{k} : \Ss_{k}\oplus_{1}\Tt_{k}  \rightarrow \Rr$ is a u.c.p. map with $ \rho_{k+1} \circ (\phi_{k}\oplus_{1}\psi_{k}) = \rho_{k}$, $ k \in \N$, then there exists a unique u.c.p. map $ \nu : \Ss \oplus_{1} \Tt \rightarrow \Rr$ such that $ \nu \circ (\phi_{k,\infty}\oplus_{1}\psi_{k,\infty}) = \rho_{k}$.

For  property 1), fix $k \in \N$ and let $ s_{k} \in \Ss_{k}$, $ t_{k} \in \Tt_{k}$. Thus, 
\begin{align*} 
     & \Big( (\phi_{k+1,\infty}\oplus_{1}\psi_{k+1,\infty} )\circ (\phi_{k}\oplus_{1}\psi_{k}) \Big) (s_{k} + t_{k})  =  \\
     &= (\phi_{k+1,\infty}\oplus_{1}\psi_{k+1,\infty} )\circ \Big( (\phi_{k}\oplus_{1}\psi_{k}) (s_{k}) +(\phi_{k}\oplus_{1}\psi_{k}) (t_{k})\Big) \\
   & = (\phi_{k+1,\infty}\oplus_{1}\psi_{k+1,\infty} )\circ \Big( \phi_{k} (s_{k}) +\psi_{k} (t_{k})\Big)\\
    & = (\phi_{k+1,\infty}\oplus_{1}\psi_{k+1,\infty} )\circ \phi_{k} (s_{k})+ (\phi_{k+1,\infty}\oplus_{1}\psi_{k+1,\infty} )\circ  \psi_{k} (t_{k})\\
     &= (\phi_{k+1,\infty} \circ \phi_{k}) (s_{k})+ (\psi_{k+1,\infty} \circ  \psi_{k}) (t_{k})\\
     &= \phi_{k,\infty}(s_{k}) + \psi_{k,\infty}(t_{k})\\
     &= \phi_{k,\infty}\oplus_{1}\psi_{k,\infty}(s_{k} + t_{k}),
\end{align*} 
where the third line is due to \ref{equatrestrict}, the fifth line comes from \ref{deytero}  and the sixth is from the  corresponding property of the inductive limits $(\Ss, \{\phi_{k,\infty}\}_{k \in \N})$ and $(\Tt, \{\psi_{k,\infty}\}_{k \in \N})$. So property 1) holds since, as mentioned, sums of the form $ s_{k}+t_{k}$ span the coproduct $\Ss_{k}\oplus_{1}\Tt_{k}$ for every $ k \in \N$. 

It remains to prove the universal property 2), and then by uniqueness, the pair $(\Ss\oplus_{1}\Tt,\{\phi_{k,\infty}\oplus_{1}\psi_{k,\infty}\}_{k \in \N})$  will  be the inductive limit. To this end, let $ (\Rr,\{\rho_{k}\}_{k \in \N})$ be a pair as in 2). Consider, for each $ k \in \N$ 
\[ \rho_{k+1}: \Ss_{k+1}\oplus_{1}\Tt_{k+1} \rightarrow \Rr \] 
then 
\begin{align*}
    \rho_{k+1}\arrowvert_{\Ss_{k+1}}: \Ss_{k+1}\rightarrow \Rr\\
    \rho_{k+1}\arrowvert_{\Tt_{k+1}}: \Tt_{k+1}\rightarrow \Rr,
\end{align*}
 are two u.c.p. maps such that 
 \begin{align*}
     \rho_{k+1}\arrowvert_{\Ss_{k+1}} \circ \phi_{k} = \rho_{k}\arrowvert_{\Ss_{k}}\; \text{ and } \; \rho_{k+1}\arrowvert_{\Tt_{k+1}} \circ \psi_{k} = \rho_{k}\arrowvert_{\Tt_{k}}.
 \end{align*}
 Therefore, by the universal properties of the inductive limits $(\Ss, \{\phi_{k,\infty}\}_{k \in \N})$ and $ (\Tt, \{\psi_{k,\infty}\}_{k \in \N})$, there exist two u.c.p. maps 
 \begin{align*}
     \mu: \Ss \rightarrow \Rr \\
     \lambda :\Tt \rightarrow \Rr,
 \end{align*}
 such that for every $k \in \N$
 \begin{align*}
     \mu \circ \phi_{k,\infty} = \rho_{k}\arrowvert_{\Ss_{k}}\\
     \lambda \circ \psi_{k,\infty} = \rho_{k}\arrowvert_{\Tt_{k}}.
 \end{align*}
 Finally, if we invoke the universal property of the coproducts (Definition \ref{defAcoprod}), we obtain a unique u.c.p. map 
 \begin{align*}
     \nu: \Ss\oplus_{1}\Tt \rightarrow \Rr,
 \end{align*}
 that satisfies $ \nu\arrowvert_{\Ss}= \mu$ and $ \nu\arrowvert_{\Tt}= \lambda$ and so, for every $k\in \N$, $ s_{k} \in \Ss_{k}$ and $ t_{k} \in \Tt_{k}$
 \begin{align*}
  \Big(\nu \circ (\phi_{k,\infty} \oplus_{1}\psi_{k,\infty})\Big)(s_{k}+t_{k}) &=  \nu \circ \Big( \phi_{k,\infty}(s_{k}) + \psi_{k,\infty}(t_{k}) \Big)\\
  &= \nu \circ \phi_{k,\infty}(s_{k})+  \nu \circ \psi_{k,\infty}(t_{k})\\
  & = \mu \circ \phi_{k,\infty}(s_{k})+  \lambda \circ \psi_{k,\infty}(t_{k})\\
  & = \rho_{k}\arrowvert_{\Ss_{k}}(s_{k}) + \rho_{k}\arrowvert_{\Tt_{k}}(t_{k})\\
  &= \rho_{k}(s_{k} +t_{k})
 \end{align*}
 by the aforementioned properties.  Thus $ \nu \circ (\phi_{k,\infty}\oplus_{1}\psi_{k,\infty}) = \rho_{k}$ as we wanted.
 \end{proof}
\end{proposition}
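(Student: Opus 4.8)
The plan is to lean on the concrete realisation of the coproducts supplied by Proposition \ref{suminuniversals}. Embedding each $\Ss_k$ and $\Tt_k$ into its universal $C^*$-algebra, I would identify, completely order isomorphically, each coproduct $\Ss_k \oplus_1 \Tt_k$ with the operator subsystem $\Ss_k + \Tt_k \subseteq C_{u}^{*}(\Ss_k) *_1 C_{u}^{*}(\Tt_k)$, and likewise $\Ss \oplus_1 \Tt$ with $\Ss + \Tt$. The payoff of this picture is that every coproduct is linearly spanned by sums $s_k + t_k$ with $s_k \in \Ss_k$ and $t_k \in \Tt_k$, and every u.c.p.\ map out of a coproduct is completely determined by its restrictions to the two embedded summands. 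Consequently all the identities I need to verify reduce to routine computations on spanning elements of the form $s_k + t_k$.

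First I would pin down the two families of maps. The connecting map $\phi_k \oplus_1 \psi_k : \Ss_k \oplus_1 \Tt_k \to \Ss_{k+1} \oplus_1 \Tt_{k+1}$ is the unique u.c.p.\ map produced by the universal property of the coproduct applied to the composites $\Ss_k \xrightarrow{\phi_k} \Ss_{k+1} \hookrightarrow \Ss_{k+1} \oplus_1 \Tt_{k+1}$ and $\Tt_k \xrightarrow{\psi_k} \Tt_{k+1} \hookrightarrow \Ss_{k+1} \oplus_1 \Tt_{k+1}$; it satisfies $(\phi_k \oplus_1 \psi_k)|_{\Ss_k} = \phi_k$ and $(\phi_k \oplus_1 \psi_k)|_{\Tt_k} = \psi_k$. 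In exactly the same way the candidate limit maps $\phi_{k,\infty} \oplus_1 \psi_{k,\infty} : \Ss_k \oplus_1 \Tt_k \to \Ss \oplus_1 \Tt$ are obtained from $\phi_{k,\infty}$ and $\psi_{k,\infty}$, with the analogous restriction property. The compatibility condition $(\phi_{k+1,\infty} \oplus_1 \psi_{k+1,\infty}) \circ (\phi_k \oplus_1 \psi_k) = \phi_{k,\infty} \oplus_1 \psi_{k,\infty}$ is then checked by evaluating both sides on $s_k + t_k$: since each map respects the embedded summands, the equation decouples into $\phi_{k+1,\infty} \circ \phi_k = \phi_{k,\infty}$ and $\psi_{k+1,\infty} \circ \psi_k = \psi_{k,\infty}$, both of which hold by the hypothesis that $(\Ss,\{\phi_{k,\infty}\})$ and $(\Tt,\{\psi_{k,\infty}\})$ are inductive limits.

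For the universal property I would take a competing pair $(\Rr, \{\rho_k\})$ with $\rho_{k+1} \circ (\phi_k \oplus_1 \psi_k) = \rho_k$ and restrict each $\rho_k$ to the two summands. The restrictions $\{\rho_k|_{\Ss_k}\}$ and $\{\rho_k|_{\Tt_k}\}$ are u.c.p.\ maps that are compatible with the systems $(\Ss_k,\phi_k)$ and $(\Tt_k,\psi_k)$ respectively, so the universal properties of the inductive limits $\Ss$ and $\Tt$ furnish unique u.c.p.\ maps $\mu : \Ss \to \Rr$ and $\lambda : \Tt \to \Rr$ with $\mu \circ \phi_{k,\infty} = \rho_k|_{\Ss_k}$ and $\lambda \circ \psi_{k,\infty} = \rho_k|_{\Tt_k}$. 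Invoking finally the universal property of the coproduct $\Ss \oplus_1 \Tt$ combines $\mu$ and $\lambda$ into a unique u.c.p.\ map $\nu : \Ss \oplus_1 \Tt \to \Rr$ with $\nu|_{\Ss} = \mu$ and $\nu|_{\Tt} = \lambda$, and evaluating on $s_k + t_k$ gives $\nu \circ (\phi_{k,\infty} \oplus_1 \psi_{k,\infty}) = \rho_k$, as required.

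I expect no deep difficulty here; the main obstacle is bookkeeping, since the argument orchestrates three distinct universal properties (two inductive limits and one coproduct) in tandem. The delicate point is to track uniqueness throughout: one must confirm that the restricted cones $\{\rho_k|_{\Ss_k}\}$ and $\{\rho_k|_{\Tt_k}\}$ are genuinely compatible so that $\mu,\lambda$ exist and are unique, and that the map $\nu$ they determine is the unique u.c.p.\ map intertwining the limit maps with the $\rho_k$. The realisation of each coproduct as $\Ss_k + \Tt_k$ is precisely what collapses every such verification to a computation on the spanning sums $s_k + t_k$.
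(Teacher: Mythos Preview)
Your proposal is correct and follows essentially the same approach as the paper: realise each coproduct as $\Ss_k+\Tt_k$ via Proposition~\ref{suminuniversals}, define the connecting and limit maps through the coproduct universal property, verify compatibility on spanning sums $s_k+t_k$, and for the universal property restrict $\rho_k$ to the summands, invoke the two inductive-limit universal properties to obtain $\mu,\lambda$, then combine via the coproduct to get $\nu$. The structure, the key lemmas used, and even the bookkeeping concerns you flag match the paper's argument step for step.
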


%%%%%%%%%%%%%%%%%%%%%%%%%%%%%%%%%%%%%%%%%%%%%%

%%%%%%%%%%% Back matter starts %%%%%%%%%%%%%%%
%\backmatter
%%%%%%%%%%%%%%%%%%%%%%%%%%%%%%%%%%%%%%%%%%%%%%
\printindex
\clearpage

\addcontentsline{toc}{Section}{REFERENCES}
\bibliography{Bibliography}

\begin{thebibliography}{10}

\bibitem{Arveson2003NOTESOT}
{\sc Arveson, W.~B.}
\newblock Notes on the unique extension property, 2003.

\bibitem{Arveson2008TheNC}
{\sc Arveson, W.~B.}
\newblock The noncommutative {C}hoquet boundary ii: Hyperrigidity.
\newblock {\em Israel Journal of Mathematics 184\/} (2008), 349--385.

\bibitem{10.2307/24892242}
{\sc Blackadar, B.~E.}
\newblock Weak expectations and nuclear {C*}-algebras.
\newblock {\em Indiana University Mathematics Journal 27}, 6 (1978),
  1021--1026.

\bibitem{10.1112/blms/bdq103}
{\sc Blecher, D.~P., and Magajna, B.}
\newblock {Dual operator systems}.
\newblock {\em Bulletin of the London Mathematical Society 43}, 2 (12 2010),
  311--320.

\bibitem{timoney_2007}
{\sc Blecher, D.~P., and Merdy, L.}
\newblock {\em Operator Algebras and Their Modules: An operator space
  approach}.
\newblock Oxford University Press, 2004.

\bibitem{CHOI1977156}
{\sc Choi, M.-D., and Effros, E.~G.}
\newblock Injectivity and operator spaces.
\newblock {\em Journal of Functional Analysis 24}, 2 (1977), 156--209.

\bibitem{Davidson2014SemicrossedPO}
{\sc Davidson, K.~R., Fuller, A.~H., and Kakariadis, E.~T.}
\newblock Semicrossed products of operator algebras by semigroups.
\newblock {\em Memoirs of the American Mathematical Society 247\/} (2017).

\bibitem{davidson_kakariadis_2019}
{\sc Davidson, K.~R., and Kakariadis, E. T.~A.}
\newblock A proof of {B}oca's theorem.
\newblock {\em Proceedings of the Royal Society of Edinburgh: Section A
  Mathematics 149}, 4 (2019), 869–876.

\bibitem{DIXMIER1967182}
{\sc Dixmier, J.}
\newblock On some {C}*-algebras considered by {G}limm.
\newblock {\em Journal of Functional Analysis 1}, 2 (1967), 182--203.

\bibitem{10.2307/24715647}
{\sc Dritschel, M.~A., and McCullough, S.~A.}
\newblock Boundary representations for families of representations of operator
  algebras and spaces.
\newblock {\em Journal of Operator Theory 53}, 1 (2005), 159--167.

\bibitem{Duncan2007CenvelopesOU}
{\sc Duncan, B.~L.}
\newblock C*-envelopes of universal free products and semicrossed products for
  multivariable dynamics.
\newblock {\em Indiana University Mathematics Journal 57\/} (2007), 1781--1788.

\bibitem{ELLIOTT197629}
{\sc Elliott, G.~A.}
\newblock On the classification of inductive limits of sequences of semisimple
  finite-dimensional algebras.
\newblock {\em Journal of Algebra 38}, 1 (1976), 29--44.

\bibitem{bbf61d43a5de485bad4fe7b6c908f9a2}
{\sc Farenick, D., Kavruk, A., Paulsen, V., and Todorov, I.}
\newblock Characterisations of the weak expectation property.
\newblock {\em New York Journal of Mathematics 24a\/} (2018), 1--29.

\bibitem{Fritz2014OperatorSS}
{\sc Fritz, T.}
\newblock Operator system structures on the unital direct sum of {C}*-algebras.
\newblock {\em Rocky Mountain Journal of Mathematics 44\/} (2014), 913--936.

\bibitem{Glimm1960OnAC}
{\sc Glimm, J.}
\newblock On a certain class of operator algebras.
\newblock {\em Transactions of the American Mathematical Society 95\/} (1960),
  318--340.

\bibitem{1979773}
{\sc Hamana, M.}
\newblock Injective envelopes of operator systems.
\newblock {\em Publications of the Research Institute for Mathematical Sciences
  15}, 3 (1979), 773--785.

\bibitem{HARRIS20192156}
{\sc Harris, S.~J., and Kim, S.-J.}
\newblock Crossed products of operator systems.
\newblock {\em Journal of Functional Analysis 276}, 7 (2019), 2156--2193.

\bibitem{Hazewinkel}
{\sc Hazewinkel, M.}
\newblock {\em Algebras, Rings and Modules: Non-commutative Algebras and
  Rings}.
\newblock CRC Press, 2016.

\bibitem{Kavruk2014}
{\sc Kavruk, A.~S.}
\newblock {Nuclearity related properties in operator systems}.
\newblock {\em Journal of Operator Theory 71}, 1 (feb 2014), 95--156.

\bibitem{Kavruk2010QuotientsEA}
{\sc Kavruk, A.~S., Paulsen, V.~I., Todorov, I.~G., and Tomforde, M.}
\newblock Quotients, exactness, and nuclearity in the operator system category.
\newblock {\em Advances in Mathematics 235\/} (2010), 321--360.

\bibitem{10.2307/24715886}
{\sc Kerr, D., and Li, H.}
\newblock On {G}romov–{H}ausdorff convergence for operator metric spaces.
\newblock {\em Journal of Operator Theory 62}, 1 (2009), 83--109.

\bibitem{KIRCHBERG1998324}
{\sc Kirchberg, E., and Wassermann, S.}
\newblock C*-algebras generated by operator systems.
\newblock {\em Journal of Functional Analysis 155}, 2 (1998), 324--351.

\bibitem{10.1093/imrn/rnz364}
{\sc Lin, Y.-F., and Todorov, I.~G.}
\newblock {Operator System Structures and Extensions of Schur Multipliers}.
\newblock {\em International Mathematics Research Notices 2021}, 17 (04 2020),
  12809--12846.

\bibitem{Mawhinney2017InductiveLI}
{\sc Mawhinney, L., and Todorov, I.}
\newblock Inductive limits in the operator system and related categories.
\newblock {\em Dissertationes Mathematicae 536\/} (Jan. 2019).

\bibitem{ORTIZ2015128}
{\sc Ortiz, C.~M., and Paulsen, V.~I.}
\newblock Lovász theta type norms and operator systems.
\newblock {\em Linear Algebra and its Applications 477\/} (2015), 128--147.

\bibitem{paulsen_2003}
{\sc Paulsen, V.}
\newblock {\em Completely Bounded Maps and Operator Algebras}.
\newblock Cambridge Studies in Advanced Mathematics. Cambridge University
  Press, 2003.

\bibitem{Paulsen2013QUANTUMCN}
{\sc Paulsen, V.~I., and Todorov, I.~G.}
\newblock Quantum chromatic numbers via operator systems.
\newblock {\em Quarterly Journal of Mathematics 66\/} (2013), 677--692.

\bibitem{tomforde_paulsen}
{\sc Paulsen, V.~I., Todorov, I.~G., and Tomforde, M.}
\newblock Operator system structures on ordered spaces.
\newblock {\em Proceedings of the London Mathematical Society 102}, 1 (2011),
  25--49.

\bibitem{10.2307/24903253}
{\sc Paulsen, V.~I., and Tomforde, M.}
\newblock Vector spaces with an order unit.
\newblock {\em Indiana University Mathematics Journal 58}, 3 (2009),
  1319--1359.

\bibitem{Voiculescu1992FreeRV}
{\sc Voiculescu, D., Dykema, K., and Nica, A.}
\newblock {\em Free random variables}.
\newblock American Mathematical Soc., 1992.

\end{thebibliography}
\nocite{*} 
\bibliographystyle{acm}

{\small DEPARTMENT OF MATHEMATICS, NATIONAL AND KAPODISTRIAN UNIVERSITY OF ATHENS, ATHENS 157 84, GREECE}

\textit{E-mail address:} \texttt{achatzinik@math.uoa.gr}

%%%%%%%%%%%%%%%%%%%%%%%%% End of file %%%%%%%%%%%%%%%
\end{document}